\newcommand{\N}{\ensuremath\mathbb{N}}
\newcommand{\R}{\ensuremath\mathbb{R}}
\newcommand{\C}{\ensuremath\mathbb{C}}
\newcommand{\T}{\ensuremath\mathsf{T}}
\newcommand*{\tran}{^{\mkern-1.5mu\mathsf{T}}}
\newcommand*{\herm}{^{\mathsf{H}}}
\newcommand{\Spd}[1]{\mathcal{S}^{#1}_{\succ}}
\newcommand{\Spsd}[1]{\mathcal{S}^{#1}_{\succeq}}
\newenvironment{smallbmatrix}{\left[\begin{smallmatrix}}{\end{smallmatrix}\right]}
\newcommand{\dt}{\,\mathrm{d}t}
\newcommand{\domega}{\,\mathrm{d}\omega}
\DeclareMathOperator{\diag}{diag}
\DeclareMathOperator{\tr}{tr}
\DeclareMathOperator{\sign}{sign}
\newcommand{\calH}{\mathcal{H}}
\newcommand{\calJ}{\mathcal{J}}
\newcommand{\calL}{\mathcal{L}}
\newcommand{\calO}{\mathcal{O}}
\newcommand{\calP}{\mathcal{P}}
\newcommand{\calW}{\mathcal{W}}
\newcommand{\calX}{\mathcal{X}}
\newcommand{\Htwo}{\calH_2}
\definecolor{mycolor1}{rgb}{0.00000,0.44700,0.74100}
\definecolor{mycolor2}{rgb}{0.85000,0.32500,0.09800}
\definecolor{mycolor3}{rgb}{0.92900,0.69400,0.12500}
\definecolor{mycolor4}{rgb}{0.46600,0.67400,0.18800}
\definecolor{mycolor5}{rgb}{0.49400,0.18400,0.55600}
\definecolor{nicegreen}{rgb}{0.3,0.7,0.4}
\newcommand{\system}{\Sigma}
\newcommand{\state}{x}
\newcommand{\stateDim}{n}
\newcommand{\reduce}[1]{\widehat{#1}}
\newcommand{\stateDimRed}{r}
\newcommand{\transferFunction}{G}
\newcommand{\inpVar}{u}
\newcommand{\inpVarDim}{m}
\newcommand{\outVar}{y}
\newcommand{\feasibleSet}{\reduce{\mathcal{C}}}
\newcommand{\systemPas}{\reduce{\system}}
\newcommand{\transferFunctionPas}{\reduce{\transferFunction}}
\newcommand{\err}[1]{#1_{\mathrm{e}}}
\newcommand{\systemErr}{\err{\system}}
\newcommand{\Aerr}{\err{A}}
\newcommand{\Berr}{\err{B}}
\newcommand{\Cerr}{\err{C}}
\newcommand{\Perr}{\err{\ctrlG}}
\newcommand{\ctrlG}{\calP}
\newcommand{\ctrlGweighted}{\ctrlG_{\mathrm{w}}}
\newcommand{\objFunc}{\mathcal{J}}
\newcommand{\Xmin}{X_{\min}}
\newcommand{\Xmax}{X_{\max}}
\newcommand{\acronym}[1]{\textsf{#1}\xspace}
\newcommand{\LTI}{\textsf{LTI}\xspace} 
\newcommand{\KYP}{\textsf{KYP}\xspace} 
\newcommand{\ARE}{\acronym{ARE}} 
\newcommand{\LMI}{\acronym{LMI}} 
\newcommand{\ourmethod}{\acronym{KLAP}}
\newcommand{\ourmethodfull}{KYP lemma based low-rank approximation for $\mathcal{H}_2$-optimal passivation (\acronym{KLAP})}
\renewcommand{\i}{\mathrm{i}}
\DeclareMathOperator{\Real}{Re}
\title{KLAP: KYP lemma based low-rank approximation for $\mathcal{H}_2$-optimal passivation}
\author[J.~Nicodemus \and M.~Voigt \and S.~Gugercin \and  B.~Unger]
{Jonas Nicodemus${}^\star$ \and Matthias Voigt$^{\ddagger}$ \and Serkan Gugercin$^{\dagger}$ \and Benjamin Unger${}^\mathsection$}
\address{${}^{\star}$ Stuttgart Center for Simulation Science (SC SimTech), University of Stuttgart, Universit\"{a}tsstr.~32, 70569 Stuttgart, Germany}
\email{jonas.nicodemus@simtech.uni-stuttgart.de}
\address{${}^{\ddagger}$ Faculty of Mathematics and Computer Science, UniDistance Suisse, Schinerstr. 18, 3900 Brig, Switzerland}
\email{matthias.voigt@fernuni.ch}
\address{${}^{\dagger}$ Department of Mathematics and Division of Computational Modeling and Data Analytics, Academy of Data Science, Virginia Tech, Blacksburg, VA 24061, United States}
\email{gugercin@vt.edu}
\address{${}^{\mathsection}$ Institute for Applied and Numerical Mathematics, Karlsruhe Institute of Technology, 76131
Karlsruhe, Germany}
\email{benjamin.unger@kit.edu}
\date{\today}
\keywords{LTI systems, passivity, passivation, KYP lemma, $\calH_2$-optimization, gradient-based optimization}
\begin{document}

\begin{abstract}
We present a novel passivity enforcement (passivation) method, called KLAP, for linear time-invariant systems based on the Kalman-Yakubovich-Popov (KYP) lemma and the closely related Lur'e equations. 
The passivation problem in our framework corresponds to finding a perturbation to a given non-passive system that renders the system passive while minimizing the $\mathcal{H}_2$ or frequency-weighted $\mathcal{H}_2$ distance between the original non-passive and the resulting passive system.
We show that this problem can be formulated as an unconstrained optimization problem whose objective function can be differentiated efficiently even in large-scale settings. 
We show that any minimizer of the unconstrained problem yields the same passive system. Furthermore, we prove that, in the absence of a feedthrough term, every local minimizer is also a global minimizer. For cases involving a non-trivial feedthrough term, we analyze global minimizers in relation to the extremal solutions of the Lur'e equations, which can serve as tools for identifying local minima.
To solve the resulting numerical optimization problem efficiently, we propose an initialization strategy based on modifying the feedthrough term and a restart strategy when it is likely that the optimization has converged to a non-global local minimum. 
Numerical examples illustrate the effectiveness of the proposed method.
\end{abstract}

\maketitle
{\footnotesize \textsc{Keywords:} LTI systems, passivity, passivation, KYP lemma, $\calH_2$-optimization, gradient-based optimization}

{\footnotesize \textsc{AMS subject classification:} 37J06, 37M99, 65P10, 93A30, 93B30, 93C05}
%

\section{Introduction}
\label{sec:intro}
Numerous prominent examples from various fields, including electrical circuits \cite{Fre00, Fre11}, power systems \cite{GriG16}, mechanical systems \cite{GPBS12}, and poroelasticity \cite{AMU21} can effectively be modeled as passive systems.
Moreover, energy-based modeling techniques \cite{Sch07, MU23} result in port-Hamiltonian systems, which are automatically passive. Having a passive system is often crucial, for instance, for obtaining physically meaningful simulation results. Even further, passive systems are useful as building blocks of larger network models since power-preserving interconnections of passive systems result in an overall passive system. This allows the structure-preserving coupling of models from different physical domains and of varying scales, and enables the use of passivity-based control methodologies~\cite{Sch17}. 
However, even if a physical process is known to be passive, models are often obtained from potentially unstructured model reduction methods or by data-driven system identification techniques. Even though there exist passivity-preserving model reduction \cite{BU22, SV23, DP84, Ant05, GPBS12} and structured system identification methods \cite{MNU23, BGV24, BGV20, ReiGG23, GJT24}, unstructured techniques can offer several benefits, including simplicity, the availability of well-established numerical algorithms and software implementations, or the existence of simple error bounds. Unfortunately, such methods can result in non-passive models. Therefore, a post-processing step is desired to restore passivity by perturbing the model. Such a perturbation is usually acceptable if the perturbation error is small, e.g., in the order of the model reduction error.

In particular, the problem setting is the following: Consider a linear time-invariant dynamical system of the form
\begin{equation}
	\label{eqn:LTI}
	\system:\quad \begin{cases}
		\dot{\state}(t) = A\state(t) + B\inpVar(t),\\
		\outVar(t) = C\state(t) + D\inpVar(t)
	\end{cases}
\end{equation}
and assume that $\Sigma$ is asymptotically stable (i.e., $A$ is Hurwitz, meaning all eigenvalues of $A$ have negative real part) with 
$A\in\R^{\stateDim\times\stateDim}$, $B\in\R^{\stateDim\times\inpVarDim}$, $C\in\R^{\inpVarDim\times\stateDim}$, and $D\in\R^{\inpVarDim\times\inpVarDim}$. Moreover, our standing assumption is that~\eqref{eqn:LTI} is not passive. We are interested in the \emph{passivation} of~\eqref{eqn:LTI}, i.e., our goal is to find a modified system 
\begin{equation}
	\label{eqn:LTI:modifiedC}
	\systemPas\big(\reduce{C}\big): 
	\quad \begin{cases}
		\dot{\state}(t) = A\state(t) + B\inpVar(t),\\
		\outVar(t) = \reduce{C}\state(t) + D\inpVar(t)
	\end{cases}
\end{equation}
with $\reduce{C}\in\R^{\inpVarDim\times\stateDim}$  
such that $\systemPas(\reduce{C})$ 
is passive and minimizes the distance to~\eqref{eqn:LTI}\footnote{As we explain in \Cref{sec:passivation},  our choice for the distance requires retaining the same feedthrough term $D$, in the passivated model  $\systemPas\big(\reduce{C}\big)$.}, in other words, we want to solve the constrained optimization problem
\begin{equation}
	\label{eqn:passivationProblem}
	\min_{\reduce{C}\in\R^{\inpVarDim\times\stateDim}} \big\|\system-\systemPas\big(\reduce{C}\big)\big\| \qquad\text{such that}\qquad \systemPas(\reduce{C}) \text{ is passive.}
\end{equation}
We will refer to this problem as the \emph{passivation problem.}
For the norm, we rely on the Hardy spaces, i.e., we compare the difference of the transfer functions of the systems $\system$ and $\systemPas(\reduce{C})$, given by
\begin{align}
	\label{eqn:transferFunction}
	\transferFunction(s) &= C(s I_{\stateDim}-A)^{-1}B + D, &
	\transferFunctionPas\big(s;\reduce{C}\big) &= \reduce{C}(s I_{\stateDim} - A)^{-1}B + D.
\end{align}

We emphasize that the constraint in the optimization problem~\eqref{eqn:passivationProblem} can be written as a linear matrix inequality with an additional $\stateDim^2$ decision variables, yielding a total of $\stateDim^2 + \inpVarDim\stateDim$ decision variables; see \cite[Cha.~5.5.1]{GriS21}. Hence, directly aiming for an efficient numerical method to solve~\eqref{eqn:passivationProblem} for $\stateDim\geq 100$ is challenging with standard methods; see \cite{GU08}. Our main idea is to apply the \emph{Kalman-Yakubovich-Popov} (\KYP) lemma in low-rank factorized form to obtain an explicit parameterization of $\reduce{C}$ that renders $\systemPas\big(\reduce{C}\big)$ passive.
We call the resulting proposed method \emph{\ourmethodfull}.

Our main contributions are the following:
\begin{enumerate}
	\item Exploiting the existence of rank-minimizing solutions of the \KYP inequality, we obtain an explicit parameterization of any passive system with the $\stateDim\inpVarDim$ decision variables, which is precisely the number of unknowns in $\reduce{C}$ in \Cref{thm:passParameterization}.
	\item We then focus on the Hardy $\calH_2$-norm as the distance measure in \eqref{eqn:passivationProblem}, use the parameterization from \Cref{thm:passParameterization} to reformulate the convex constrained optimization problem~\eqref{eqn:passivationProblem:H2:constrained} as a non-convex unconstrained optimization problem~\eqref{eqn:passivationProblem:H2}, and establish solvability, uniqueness, and gradient computations; see \Cref{thm:H2passivation} and \Cref{thm:H2solvability}. Moreover, for the case of a skew-symmetric feedthrough term, i.e., $D + D^\T= 0$ in~\eqref{eqn:passivationProblem}, we show in \Cref{prop:locGlobMin} that any local minimizer is also a global minimizer.
	\item If $D+D^\T \neq 0$, in general, local minimizers might not yield the global minimum. We thus provide a novel criterion to check whether a local minimizer is a global minimizer using the extremal solutions of the \KYP inequality in \Cref{thm:locmin}. We demonstrate that the criterion can be checked without computing the extremal solutions and present a restart strategy~\eqref{eqn:gradientDescent}. To avoid getting trapped in a non-global local minimum in the first place, we propose an initialization strategy in \Cref{sec:passivation:init}, which relies on a perturbation of the feedthrough term.
\end{enumerate} 
We demonstrate \ourmethod on three numerical examples in \Cref{sec:numerics}, including a challenging benchmark system of dimension $\stateDim = 800$ with four inputs and outputs obtained from system identification of a high-speed smartphone interconnect link taken from \cite{GriS21}.

\subsection{Literature review and state-of-the-art}
\label{sec:intro:literature}
Key foundational work on dissipative systems can be traced back to Willems\cite{Wil72, Wil72a}, who introduced a formal framework for passive systems and their relation to Lyapunov stability. Willems defined dissipative systems in terms of a supply rate and a storage function.
A comprehensive treatment of passivity and $L_2$ gain methods is given in~\cite{Sch17}, establishing links between dissipativity and control design. A recent overview of dissipativity theory is given in~\cite{BLME20}, including a detailed chapter about the \KYP lemma and the Lur'e equations~\cite[Cha.~3]{BLME20}. 
A survey of passivation methods is given in \cite{GriS21, GriG16}; see also~\cite{GU08} for a comparative study of the methods. Passivation methods can be divided into three categories:
Methods in the first category such as \cite[Sec.~10.7]{GriG16} or \cite{FaePCMR21} are based on the \KYP lemma and result in a constrained optimization problem, where, usually, the goal is to find the smallest perturbation on the output matrix in a weighted norm such that a solution to the \KYP inequality exists. The problem can be formulated as a standard \LMI and is known to be convex. Unfortunately, the computational cost scales poorly with the system size due to the presence of a Lyapunov matrix (in our manuscript $X$ in \eqref{eqn:KYP}), which is instrumental for the \KYP lemma. The trace parameterization methods \cite{Dum02,CoePS04} try to overcome this issue by eliminating the Lyapunov matrix from the optimization problem. 
However, this approach first requires solving many Lyapunov equations and storing those solutions. \ourmethod is similar to the trace parametrization methods but bypasses the need for this initial stage of solving  many Lyapunov equations. 
Instead, only two Lyapunov equations are solved in each iteration of the optimization procedure.
Methods of the second category are based on the characterization of passivity in terms of the spectrum of a Hamiltonian matrix, see ~\cite{Gri04,GriU06}. The idea consists of iteratively perturbing the Hamiltonian matrix until the perturbed Hamiltonian matrix has no imaginary eigenvalues, indicating passivity of the perturbed model. 
However, this method has no convergence guarantees. Moreover, imaginary eigenvalues may be reintroduced during the iteration and thus, it may take many iterations to remove all imaginary eigenvalues, making the method potentially inefficient. Various improvements of this approach were considered, e.g., the use of structure-preserving methods for the Hamiltonian eigenvalue problem~\cite{SchS07} or the extension to wider problem classes such a descriptor systems with general supply rates \cite{BruS13}.
A related approach to finding the closest passive system is devised in~\cite{FazGL21}. Therein, a low-rank \textsf{ODE} is proposed to find an optimal low-rank perturbation in the sense of a weighted Frobenius norm of the difference of the output matrices.
Finally, the methods of the third category are based on passivity enforcement at discrete frequencies via linear or quadratic programming~\cite{SAN05, SAN07, GS01}.
Consequently, methods from this category only guarantee passivity in a certain frequency range.
Instead of finding the nearest passive system, one may also look for the nearest port-Hamiltonian system. This problem is studied in~\cite{GS18}. There, the authors consider the Frobenius norm of the difference of the system matrices as a perturbation measure. However, this measure is not invariant under state-space transformations and depends on the particular realization under consideration. 

\subsection{Organization of the manuscript}
After this introduction, we recall fundamentals on passivity, positive realness, and the \KYP lemma in \Cref{sec:prelims}, which concludes with a parameterization of passive systems with~\Cref{thm:passParameterization}. Based on this parameterization, we establish an $\Htwo$-based optimization problem, derive the gradient of the objective functional, and discuss well-posedness in the first subsections of~\Cref{sec:passivation}. We also present strategies to overcome potential pitfalls in the optimization process in the latter part of~\Cref{sec:passivation}.
Finally, we demonstrate the effectiveness of \ourmethod on three numerical examples in~\Cref{sec:numerics}. Conclusions are given in~\Cref{sec:conclusions}.

\subsection{Notation}
We use the symbols $\N$, $\R$, $\C$, $\C^+$, $\C^-$, $\R^n$, and $\R^{n\times m}$ to denote, respectively, the positive integers, the real numbers, the complex numbers, the complex numbers with positive real part, the complex numbers with negative real part, the set of column vectors with $n\in\N$ real entries, and the set of real $n\times m$ matrices. Furthermore, $A \succ 0$ and $A \succeq 0$ indicate that $A$ is, respectively, symmetric positive definite and symmetric positive semi-definite. We denote by $\Spd{n}$, and $\Spsd{n}$ the sets of real $n \times n$ symmetric positive definite and symmetric positive semi-definite  matrices, respectively. For a matrix $A \in \R^{n \times m}$, $\sigma(A)$ denotes the spectrum, $\tr(A)$ the trace, and $\|A\|_{\mathrm{F}}$ the Frobenius norm. Additionally, for some $W\in\Spd{m}$, we define the weighted inner product
\begin{equation}
	\label{eqn:weightedInnerProduct}
	\langle \cdot,\cdot \rangle_{W} \colon \R^{n\times m}\times \R^{n \times m} \to \R,\qquad (A,B) \mapsto \tr\big(B W A^\T\big)
\end{equation}and the associated induced norm ${\|A\|}_{W} = \sqrt{\tr\big(A W A^\T\big)}$.

\section{Preliminaries and parameterization of passivity}
\label{sec:prelims}

\subsection{Passive systems and positive real transfer functions}
\label{sec:prelims:passivity}
In this section, we review the theory on dissipative dynamical systems, which will be fundamental for our passivation method. The \emph{modified Popov function} of the system \eqref{eqn:LTI} is defined as
\begin{equation}
	\label{eqn:popovFunction}
	\Phi\colon\C\setminus \sigma(A) \to\C^{\inpVarDim\times \inpVarDim},\qquad 
	s\mapsto \transferFunction(s) + \transferFunction(s)\herm.
\end{equation}

\begin{definition}[Positive realness, passivity, storage function]\,
	\begin{enumerate}
		\item The transfer function $G$ in \eqref{eqn:transferFunction} is \emph{positive real} if $G$ has no poles in $\C^+$ and the modified Popov function $\Phi$ is Hermitian positive semi-definite in the open right half-plane, i.e.,
		\begin{equation*}
			 \Phi(\lambda) \succeq 0 \qquad \text{for all } \lambda \in \C^+.
		\end{equation*}
		\item The system~$\system$ in \eqref{eqn:LTI} is called \emph{passive}, if there exists a \emph{storage function} $\calH: \R^n \to [0,\infty)$ such that
		\begin{equation*}
			 \calH(x(t_2)) \le \calH(x(t_1)) + \int_{t_1}^{t_2} u(t)^\T y(t)\,\dt
		\end{equation*}
		for all solution trajectories $(u,x,y)$ of \eqref{eqn:LTI} and all $t_1 \le t_2$.
	\end{enumerate}
\end{definition}
If the system~\eqref{eqn:LTI} is minimal, then positive realness and passivity are equivalent; see~\cite{BLME20}. Positive realness can also be checked in terms of a condition on the imaginary axis. More precisely, the transfer function $G$ in \eqref{eqn:transferFunction} is positive real if and only if $G$ has no poles in $\C^+$,
\begin{equation*}
 \Phi(\i\omega) \succeq 0 \quad \text{for all } \i \omega \in \i\R \setminus \sigma(A),
\end{equation*}
and all the purely imaginary poles of $G$ are simple and the corresponding residue matrices are Hermitian and positive semi-definite \cite[Thm.~2.7.2]{AndV73}.
The \KYP operator associated with the system $\system$ in~\eqref{eqn:LTI} is given by
\begin{equation*}
    \calW_\system\colon \R^{\stateDim\times\stateDim}\to\R^{(\stateDim+\inpVarDim)\times(\stateDim+\inpVarDim)},\qquad X \mapsto 
    \begin{bmatrix}
        -A^\T X - XA & C^\T - XB\\
        C-B^\T X & D+D^\T
    \end{bmatrix}.
\end{equation*}
The associated \KYP inequality reads
\begin{equation}
    \label{eqn:KYP}
    \calW_\system(X) \succeq 0.
\end{equation}
For our forthcoming analysis, we draw on several results from the literature~\cite{LanT85, CamIV14, Wil72a} related to the \KYP inequality~\eqref{eqn:KYP}.
\begin{theorem}
    \label{thm:KYP}
    Consider the dynamical system~$\Sigma$ in~\eqref{eqn:LTI} and the associated \KYP inequality~\eqref{eqn:KYP}.
    \begin{enumerate}
        \item\label{thm:KYP:symPosSemi} If the system $\Sigma$ is asymptotically stable, then any solution $X\in\R^{\stateDim\times\stateDim}$ of~\eqref{eqn:KYP} is symmetric positive semi-definite.
        \item\label{thm:KYP:bounded} Assume that $\Sigma$ is controllable and passive. Then, there exist \emph{minimal and maximal solutions} $\Xmin,\,\Xmax \in \R^{n \times n}$ of \eqref{eqn:KYP} such that any solution $X \in \R^{n \times n}$ of \eqref{eqn:KYP} satisfies
        \begin{equation*}
            \Xmin \preceq X \preceq \Xmax.
        \end{equation*}
    \end{enumerate}
\end{theorem}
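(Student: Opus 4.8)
The two statements rest on different ideas, so I would handle them in turn, observing first that by the conventions of the notation section the relation $\calW_\system(X)\succeq 0$ already encodes that $\calW_\system(X)$ is symmetric.

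\emph{Part (1).} I would extract symmetry of $X$ and then positive semi-definiteness. Symmetry of the leading $\stateDim\times\stateDim$ block $-A^\T X - XA$ of $\calW_\system(X)$ means that $Z := X - X^\T$ solves the homogeneous Lyapunov equation $A^\T Z + ZA = 0$. Since $A$ is Hurwitz, the Lyapunov operator $Z \mapsto A^\T Z + ZA$ has spectrum $\{\lambda+\mu : \lambda,\mu\in\sigma(A)\}\subset\C^-$ and is therefore invertible, forcing $Z = 0$, i.e.\ $X = X^\T$. For semi-definiteness I would use that the leading principal block of a positive semi-definite matrix is positive semi-definite, so $-A^\T X - XA \succeq 0$. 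Writing $Q := -(A^\T X + XA)\succeq 0$, the symmetric matrix $X$ solves $A^\T X + XA = -Q$, whose unique solution under the Hurwitz hypothesis is $X = \int_0^\infty \ee^{A^\T t} Q\, \ee^{At}\dt \succeq 0$. This part is routine; the Hurwitz assumption is the only ingredient, and it enters twice.

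\emph{Part (2).} For the existence of extremal solutions I would follow the dissipativity theory of Willems. The key algebraic identity, checked by differentiating $V_X(\state):=\tfrac12\state^\T X\state$ along trajectories of \eqref{eqn:LTI}, is
\[
 \begin{bmatrix}\state\\\inpVar\end{bmatrix}^{\!\T}\!\calW_\system(X)\begin{bmatrix}\state\\\inpVar\end{bmatrix} = 2\bigl(\inpVar^\T\outVar - \ddt V_X(\state)\bigr),
\]
from which $\calW_\system(X)\succeq 0$ is seen to be equivalent to the dissipation inequality $\ddt V_X(\state(t)) \le \inpVar(t)^\T\outVar(t)$ holding along all trajectories; that is, symmetric solutions of the KYP inequality are exactly the quadratic storage functions $V_X$. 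I would then introduce the available storage $S_a$ and the required supply $S_r$,
\[
 S_a(\state_0) = \sup_{\inpVar,\,T\ge 0}\Bigl(-\!\int_0^{T}\!\inpVar^\T\outVar\dt\Bigr),\qquad
 S_r(\state_0) = \inf_{\inpVar,\,T\ge 0}\int_{-T}^{0}\!\inpVar^\T\outVar\dt,
\]
where the first supremum runs over trajectories with $\state(0)=\state_0$ and the second infimum over trajectories steering $\state(-T)=0$ to $\state(0)=\state_0$. Passivity guarantees $S_a<\infty$, while controllability guarantees that the admissible set defining $S_r$ is nonempty and hence $S_r$ is finite, with $0\le S_a\le S_r$.

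The argument concludes with Willems' sandwiching principle: $S_a$ and $S_r$ are themselves storage functions, and every storage function $V$ with $V(0)=0$ satisfies $S_a(\state)\le V(\state)\le S_r(\state)$ for all $\state$. Because the dynamics are linear and the supply rate $\inpVar^\T\outVar$ is quadratic, the two optimal control problems are linear-quadratic and their value functions are quadratic forms, $S_a(\state)=\tfrac12\state^\T\Xmin\state$ and $S_r(\state)=\tfrac12\state^\T\Xmax\state$; in particular $\Xmin$ and $\Xmax$ solve the KYP inequality. Applying the sandwiching inequality to $V=V_X$ for an arbitrary solution $X$ and comparing the corresponding quadratic forms then gives $\Xmin\preceq X\preceq\Xmax$. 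I expect the main obstacle to be precisely this penultimate step: establishing that $S_a$ and $S_r$ are finite and genuinely quadratic. Finiteness is where passivity (for $S_a$) and controllability (for $S_r$) are indispensable, and the quadratic structure rests on the linear-quadratic nature of the problem; this is the technical core supplied by the cited works \cite{Wil72a,LanT85,CamIV14}.
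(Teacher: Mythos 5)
Your proposal is correct, and it matches the paper's treatment: the paper states this theorem without proof, delegating it to the cited references \cite{LanT85, CamIV14, Wil72a}, and your two arguments---symmetry and positive semi-definiteness of $X$ via the invertibility of the Lyapunov operator for Hurwitz $A$ together with the integral representation $X=\int_0^\infty \ee^{A^\T t}Q\,\ee^{At}\dt$ for Part (1), and Willems' available-storage/required-supply construction with the quadratic value-function argument for Part (2)---are precisely the classical proofs contained in those sources, with passivity and controllability entering exactly where you say they do. No gaps to report.
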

If $D + D^\T$ is nonsingular, then applying
the Schur complement to~\eqref{eqn:KYP} yields the \emph{algebraic Riccati equation} (\ARE)
\begin{equation}
	\label{eqn:kyp:are}
	A^\T X +XA + \big(C^\T - XB\big){\big(D+D^\T\big)}^{-1}\big(C-B^\T X\big) = 0.
\end{equation}
The connection between solutions of~\eqref{eqn:kyp:are} and the \KYP inequality are studied in great detail in~\cite{Wil71}. Numerical solvers for the \ARE are readily available and can be used to compute both \emph{minimal} and \emph{maximal solutions}, which are also the minimal and maximal solutions of the \KYP inequality from \Cref{thm:KYP}\,\ref{thm:KYP:bounded}. It should be noted that both extremal solutions are also \emph{rank-minimizing} solutions, meaning that $W_\Sigma(\Xmax)$ and $W_\Sigma(\Xmin)$ achieve that the smallest possible rank among all matrices $W_\Sigma(X)$ for solutions $X$ of the \KYP inequality \cite[Rem.~10]{Wil71}. 
It can be shown (for nonsingular $D + D^\T$) that this minimal rank equals $m$, the number of inputs and outputs; see the discussion following \cite[Thm.~10]{Wil72a}).
The extremal solutions have the additional property that $\sigma(Y_{\min}) \subseteq \C^- \cup \i\R$ and $\sigma(Y_{\max}) \subseteq \C^+ \cup \i \R$ with 
\begin{equation*}
	\begin{aligned}
		Y_{\min} &\vcentcolon= A - B\big(D+D^\T\big)^{-1}\big(C-B^\T X_{\min} \big) \qquad\text{and}\\
		Y_{\max} &\vcentcolon= A - B\big(D+D^\T\big)^{-1}\big(C-B^\T X_{\max} \big).
	\end{aligned}
\end{equation*}
Hence, in the case of $\Xmin = \Xmax$, it holds $Y_{\min}=Y_{\max}$ and all eigenvalues of $Y_{\min}$ are on the imaginary axis.

We note that in case of a singular $D+D^\T$, instead of {\ARE}s, one could consider Lur'e equations and obtain similar statements as the ones above, see, e.g., \cite{Rei11}. We will however not make use of such results  and for this reason stay with the simpler analysis using {\ARE}s.

\subsection{Parameterization of passivity via the KYP lemma}
\label{sec:prelims:parameterization-passivity}
Since there are many variants available in the literature with different assumptions on controllability, we recall here the exact notion that we need in the sequel.
\begin{theorem}[{\KYP} lemma {\cite[Cha.~3.3]{BLME20}}, \cite{CGH24}\footnote{As presented, this result is actually a special version of a more general \KYP lemma that is called \emph{positive real lemma} in the literature. The \KYP lemma in the literature is mostly attributed to the second statement of this theorem.}] 
	\label{thm:KYPlemma}
    Consider the system $\system$ in \eqref{eqn:LTI} with the transfer function $G$ in \eqref{eqn:transferFunction}.
    \begin{enumerate}
    \item The system $\system$ is passive if and only if the Lur'e equations
	\begin{subequations}
		\label{eqn:Lure}
		\begin{align}
			\label{eqn:Lure:a} A^\T X + X A &= -LL^\T,\\
			\label{eqn:Lure:b} X B - C^\T &= -LM^\T,\\
			\label{eqn:Lure:c} D + D^\T &= M M^\T
		\end{align}
	\end{subequations}
	have a solution $(X,L,M)\in\Spsd{\stateDim}\times\R^{\stateDim\times\inpVarDim}\times\R^{\inpVarDim\times\inpVarDim}$. In particular, $\system$ is passive if and only if the \KYP inequality~\eqref{eqn:KYP} has a solution in $\Spsd{\stateDim}$.
    \item If the \KYP inequality~\eqref{eqn:KYP} has a solution in $\Spsd{\stateDim}$, then the transfer function $G$ is positive real. Conversely, if $G$ is positive real and $\system$ is controllable\footnote{Instead of the controllability, slightly weaker assumptions can be imposed. These conditions are rather technical and for this reason, we refer the reader to \cite{CleALM97}.}, then the \KYP inequality~\eqref{eqn:KYP} has a solution in $\Spsd{\stateDim}$.
    \end{enumerate}
\end{theorem}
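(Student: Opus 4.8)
The plan is to separate the two statements and, within the first, to treat the purely algebraic equivalence and the dynamical (dissipativity) equivalence independently. For the ``in particular'' claim of part~(1), the equivalence between solvability of the Lur'e equations~\eqref{eqn:Lure} in $\Spsd{\stateDim}\times\R^{\stateDim\times\inpVarDim}\times\R^{\inpVarDim\times\inpVarDim}$ and solvability of the \KYP inequality~\eqref{eqn:KYP} in $\Spsd{\stateDim}$ is nothing but a factorization statement: a real symmetric matrix is positive semi-definite if and only if it factors as $NN^\T$. Partitioning $N = \begin{bmatrix} L \\ M \end{bmatrix}$ with $L \in \R^{\stateDim\times\inpVarDim}$ and $M \in \R^{\inpVarDim\times\inpVarDim}$ conformally with the block structure of $\KYPmat(X)$ and matching the blocks reproduces exactly~\eqref{eqn:Lure:a}--\eqref{eqn:Lure:c}; conversely, any Lur'e triple assembles into such a factorization, so $\KYPmat(X) \succeq 0$. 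Since $A$ is Hurwitz, \Cref{thm:KYP}\,\ref{thm:KYP:symPosSemi} guarantees that any solution $X$ of~\eqref{eqn:KYP} is automatically in $\Spsd{\stateDim}$, so the constraint $X \in \Spsd{\stateDim}$ in~\eqref{eqn:Lure} comes for free.

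For the dynamical equivalence in part~(1), the easy direction is to show that a solution $X \in \Spsd{\stateDim}$ of~\eqref{eqn:KYP} yields passivity by exhibiting the storage function $\calH(x) = \tfrac12 x^\T X x$. Differentiating along a trajectory of~\eqref{eqn:LTI} and substituting $\dot x = Ax + Bu$, $y = Cx + Du$, I would verify the pointwise identity
\begin{equation*}
	u(t)^\T y(t) - \ddt \calH(x(t)) = \tfrac12 \begin{bmatrix} x(t) \\ u(t)\end{bmatrix}^\T \KYPmat(X) \begin{bmatrix} x(t) \\ u(t)\end{bmatrix} \ge 0,
\end{equation*}
and then integrate from $t_1$ to $t_2$ to recover the dissipation inequality in the definition of passivity. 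The converse (passivity $\Rightarrow$ solvability of~\eqref{eqn:KYP}) is where the first genuine work lies: following Willems, I would introduce the available storage $\calH_a(x_0) = \sup\bigl(-\int_0^T u(t)^\T y(t)\dt\bigr)$ over trajectories emanating from $x_0$, show that passivity makes it finite and nonnegative, that it is itself a storage function, and---crucially---that for the linear dynamics with the quadratic supply rate $u^\T y$ it is a quadratic form $\tfrac12 x^\T X x$ with $X \in \Spsd{\stateDim}$. Reversing the differentiation identity above then forces $\KYPmat(X) \succeq 0$. Establishing the quadratic structure of $\calH_a$ (an LQ/dynamic-programming argument) is the main technical point of part~(1).

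For part~(2), the implication ``\KYP solvable $\Rightarrow$ positive real'' admits a clean self-contained proof. Since $A$ is Hurwitz, $G$ has no poles in $\C^+$. Writing $\Psi(s) = M^\T + L^\T(sI_\stateDim - A)^{-1}B$ for the spectral factor built from a Lur'e triple and abbreviating $\Delta(\lambda) = (\lambda I_\stateDim - A)^{-1}$, I would expand $\Psi(\lambda)\herm\Psi(\lambda)$, substitute~\eqref{eqn:Lure:a}--\eqref{eqn:Lure:c}, and use $A\Delta(\lambda) = \lambda\Delta(\lambda) - I_\stateDim$ to collapse the cross terms, arriving at
\begin{equation*}
	\Phi(\lambda) = \Psi(\lambda)\herm \Psi(\lambda) + 2\real(\lambda)\, B^\T \Delta(\lambda)\herm X \Delta(\lambda) B.
\end{equation*}
For $\lambda \in \C^+$ both summands are Hermitian positive semi-definite (the second because $\real(\lambda) > 0$ and $X \succeq 0$), so $\Phi(\lambda) \succeq 0$, i.e., $G$ is positive real.

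The converse ``positive real and controllable $\Rightarrow$ \KYP solvable'' is the deepest part and the principal obstacle of the whole theorem. I would obtain it from rational spectral factorization: positive realness gives $\Phi(\i\omega) \succeq 0$ on the imaginary axis, and the para-Hermitian rational matrix $\Phi$ then factors as $\Psi(-s)^\T\Psi(s)$ with a stable factor $\Psi$ sharing the realization matrices $(A,B)$; reading off the coefficients of $\Psi$ recovers $L$ and $M$, and controllability ensures that the associated Lyapunov equation pins down a symmetric $X$ making the reconstruction consistent. Equivalently, when $D+D^\T$ is nonsingular, this reduces to the solvability of the algebraic Riccati equation~\eqref{eqn:kyp:are} and the passage to its extremal (stabilizing/anti-stabilizing) solutions, as recorded after \Cref{thm:KYP}. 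I expect the existence of the stable spectral factor---respectively the stabilizing solution of~\eqref{eqn:kyp:are}---to be the hard step, which is precisely why we invoke the cited references rather than reproduce the factorization theory here.
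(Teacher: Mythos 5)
First, note that the paper contains no proof of this theorem: it is recalled verbatim from the literature, with the citations to \cite[Cha.~3.3]{BLME20} and \cite{CGH24} standing in for the argument. So there is no in-paper proof to match; your sketch reconstructs the classical route instead (storage-function sufficiency, Willems' available-storage argument for necessity, spectral factorization for the positive-real converse), and most of it checks out. In particular, your identity $\Phi(\lambda) = \Psi(\lambda)\herm\Psi(\lambda) + 2\real(\lambda)\,B^\T\Delta(\lambda)\herm X \Delta(\lambda) B$ is correct (the bracket collapses to $(\conj{\lambda}+\lambda)X$ exactly as you indicate) and yields a clean proof of the first implication of part (2); the pointwise dissipation identity for $\calH(x)=\tfrac12 x^\T X x$ is also correct; and deferring the two genuinely hard steps (quadratic structure of the available storage, existence of the stable spectral factor) to the cited references is consistent with the paper's own treatment.

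There is, however, one genuine gap: your claim that the equivalence between solvability of the Lur'e equations~\eqref{eqn:Lure} and of the \KYP inequality~\eqref{eqn:KYP} ``is nothing but a factorization statement'' whose dimensions ``come for free.'' They do not. A positive semi-definite $\KYPmat(X)$ of size $\stateDim+\inpVarDim$ factors as $NN^\T$ with $N$ having $\rank \KYPmat(X)$ columns, which may exceed $\inpVarDim$, whereas the Lur'e equations demand $L\in\R^{\stateDim\times\inpVarDim}$ and $M\in\R^{\inpVarDim\times\inpVarDim}$, i.e., $\rank\KYPmat(X)\leq\inpVarDim$. For a \emph{fixed} solution $X$ this can fail: with $D=0$, semi-definiteness forces $C = B^\T X$, and any solution with $\rank\big({-A^\T X - XA}\big) > \inpVarDim$ admits no $\inpVarDim$-column factor. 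Hence the passage from \KYP solvability to Lur'e solvability requires the existence of a \emph{rank-minimizing} solution $X$ --- precisely the nontrivial structural fact this paper singles out and exploits (see the first listed contribution, the discussion of extremal solutions after \Cref{thm:KYP} citing \cite{Wil71,Wil72a}, and \Cref{thm:passParameterization}), which in the literature comes from the extremal \ARE solutions when $D+D^\T$ is nonsingular, or from Lur'e-equation theory \cite{Rei11} in general. The repair is available within your own sketch: the spectral factor $\Psi$ you construct in the converse of part (2) is an $\inpVarDim$-column object, and reading $L$ and $M$ off that factor (rather than off an arbitrary factorization of an arbitrary \KYP solution) is the standard way to obtain the Lur'e triple with the stated dimensions.
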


Clearly, the first equation~\eqref{eqn:Lure:a} is a Lyapunov equation and is uniquely solvable (for a fixed $L$) if $A$ is Hurwitz \cite[Cha.~12, Thm.~2]{LanT85}. 
We define the Lyapunov operator
\begin{equation}
	\label{eqn:LyapOperator}
	\calL\colon \R^{\stateDim\times\stateDim}\to\R^{\stateDim\times\stateDim},\qquad X \mapsto A^\T X + XA.
\end{equation}
We emphasize that $A$ being Hurwitz implies that $\calL$ is invertible \cite[Cha.~12, Thm.~2]{LanT85}. 
Conversely, we can use the right-hand sides of the Lur'e equations~\eqref{eqn:Lure} to parameterize any passive system $\systemPas\big(\reduce{C}\big)$, as we detail in the next result. 

\begin{proposition}
	\label{thm:passParameterization}
	The system $\system$ in \eqref{eqn:LTI} is passive if and only if there exist matrices $L\in\R^{\stateDim\times\inpVarDim}$ and $M\in\R^{\inpVarDim\times\inpVarDim}$ such that
	\begin{subequations}
		\begin{align}
			\label{eqn:passParam:C} 
			C &= B^\T \calL^{-1}\big(-LL^\T\big) + ML^\T,\\
			\label{eqn:passParam:D}
			D + D^\T &= M M^\T.
		\end{align}
	\end{subequations}
\end{proposition}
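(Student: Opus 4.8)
The plan is to read off both directions of \Cref{thm:passParameterization} from the \KYP lemma (\Cref{thm:KYPlemma}) after eliminating the Lyapunov variable $X$ from the Lur'e equations~\eqref{eqn:Lure}. The crucial structural fact is that the first Lur'e equation~\eqref{eqn:Lure:a} is exactly $\calL(X) = -LL^\T$, and since $A$ is Hurwitz the Lyapunov operator $\calL$ is invertible; hence any solution triple $(X,L,M)$ is forced to satisfy $X = \calL^{-1}(-LL^\T)$. This makes $X$ a function of $L$ alone and is precisely what allows the parameterization to be stated purely in terms of $L$ and $M$.

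For the forward implication, I would assume $\system$ is passive and apply the first statement of \Cref{thm:KYPlemma} to obtain a solution $(X,L,M)\in\Spsd{\stateDim}\times\R^{\stateDim\times\inpVarDim}\times\R^{\inpVarDim\times\inpVarDim}$ of~\eqref{eqn:Lure}. Then~\eqref{eqn:Lure:a} gives $X=\calL^{-1}(-LL^\T)$, and transposing the second equation~\eqref{eqn:Lure:b} together with the symmetry of $X$ yields $C = B^\T X + ML^\T = B^\T\calL^{-1}(-LL^\T) + ML^\T$, which is~\eqref{eqn:passParam:C}; the third equation~\eqref{eqn:Lure:c} is already~\eqref{eqn:passParam:D}. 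This direction is therefore little more than a rewriting of the \KYP lemma.

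For the converse, given $L$ and $M$ satisfying~\eqref{eqn:passParam:C}--\eqref{eqn:passParam:D}, I would set $X := \calL^{-1}(-LL^\T)$ and check that $(X,L,M)$ is an admissible solution of the Lur'e equations so that \Cref{thm:KYPlemma} delivers passivity. By construction $X$ satisfies~\eqref{eqn:Lure:a}, and~\eqref{eqn:passParam:D} is~\eqref{eqn:Lure:c}; transposing~\eqref{eqn:passParam:C} and using symmetry of $X$ recovers~\eqref{eqn:Lure:b}. It remains to verify $X\in\Spsd{\stateDim}$. Symmetry follows from invertibility of $\calL$, since $\calL(X^\T) = \calL(X)^\T = (-LL^\T)^\T = -LL^\T = \calL(X)$ forces $X^\T = X$. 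Positive semi-definiteness then follows either from the integral representation $X = \int_0^\infty e^{A^\T t}LL^\T e^{At}\dt$ of the unique Lyapunov solution (using that $A$ is Hurwitz and $LL^\T\succeq 0$), or, equivalently, by noting that the verified Lur'e identities give $\calW_\system(X)\succeq 0$ and invoking \Cref{thm:KYP}\,\ref{thm:KYP:symPosSemi}. With $X\in\Spsd{\stateDim}$ established, the first statement of \Cref{thm:KYPlemma} concludes that $\system$ is passive.

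The only step that needs genuine care---everything else being algebraic bookkeeping---is confirming that the reconstructed $X$ actually lies in $\Spsd{\stateDim}$, rather than merely solving the three linear identities. Both its symmetry (via uniqueness of the Lyapunov solution) and its semi-definiteness (via the Hurwitz property and the sign of the right-hand side) must be in place before \Cref{thm:KYPlemma} is applicable, and I expect this to be the main, if modest, obstacle.
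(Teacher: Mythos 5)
Your proposal is correct and takes essentially the same route as the paper: eliminate $X$ from the Lur'e equations via the invertible Lyapunov operator $\calL$ and invoke \Cref{thm:KYPlemma} in both directions. Your explicit verification that the reconstructed $X=\calL^{-1}\big(-LL^\T\big)$ is symmetric positive semi-definite (via uniqueness of the Lyapunov solution and the Hurwitz property, or equivalently via \Cref{thm:KYP}) merely fills in a detail that the paper's one-line converse leaves implicit.
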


\begin{proof}
	The result is a direct application of \Cref{thm:KYPlemma}. In particular, note that if $\system$ is passive, then the Lur'e equations~\eqref{eqn:Lure} have a solution $(X,L,M)$ and~\eqref{eqn:passParam:C} is obtained from~\eqref{eqn:Lure:b} by eliminating the matrix $X$ via~\eqref{eqn:Lure:a}. Conversely, if~\eqref{eqn:passParam:C} is satisfied, then we immediately obtain a solution of the Lur'e equations~\eqref{eqn:Lure} by setting $X = \calL^{-1}\big(-LL^\T\big)$ and hence, $\system$ is passive by virtue of \Cref{thm:KYPlemma}.
\end{proof}

\Cref{thm:passParameterization} provides a parameterization of passive systems in terms of the matrices $L$ and $M$, i.e., a mapping from $L$ and $M$ to the output matrix $\reduce{C}$ and the feedthrough matrix $D$.
The reverse mapping is not unique. If $D+D^\T$ is nonsingular, one choice is given by
\begin{equation}
	\label{eqn:kyp:are:LM}
	M = (D+D^\T)^{\tfrac{1}{2}}\qquad \text{and}\qquad L = \big(C^\T-XB\big)M^{-1},
\end{equation}
where $X$ is a solution of the \ARE~\eqref{eqn:kyp:are}.

\begin{remark}
	The parameterization of $C$ and $D$ is similar to the so-called trace parameterization presented in \cite{Dum02,CoePS04}. The key difference here is that we explicitly exploit the existence of rank-minimizing solutions and bypass the semi-definiteness constraint by optimizing over the Cholesky factors.
\end{remark}

\section{\texorpdfstring{$\calH_2$}{H2}-optimal passivation}
\label{sec:passivation}
Recall the passivation problem  defined in \eqref{eqn:passivationProblem}. In this section,  we will use the $\calH_2$-norm for the objective functional and introduce an efficient numerical approach to the resulting passivation problem with respect to the $\calH_2$-norm.
For an asymptotically stable dynamical system~\eqref{eqn:LTI} with transfer function~\eqref{eqn:transferFunction}, the $\calH_2$-norm of $\transferFunction$ is defined as 
$$
\left\| \transferFunction \right\|_{\calH_2}^2 \vcentcolon= \frac{1}{2\pi} \int_{-\infty}^\infty \left\| \transferFunction(\i \omega) \right\|_{\mathrm{F}}^2\, \domega.
$$
Note that  ${\left\| \transferFunction \right\|}_{\calH_2}$ is finite if and only if $D = 0$, see, e.g., \cite[Cha.~5]{Ant05a}.

For our passivation problem~\eqref{eqn:passivationProblem} where $A$ is Hurwitz, the $\calH_2$-error (the objective functional) is given by
\begin{align}
	\label{eqn:H2err}
	\left\|\transferFunction - \transferFunctionPas\big(\cdot;\reduce{C}\big)\right\|_{\calH_2}^2 = \frac{1}{2\pi} \int_{-\infty}^\infty \left\|\transferFunction(\i \omega) - \transferFunctionPas\big(\i\omega;\reduce{C}\big)\right\|_{\mathrm{F}}^2\, \domega.
\end{align}
This distance measure implies that the passivated model $\systemPas(\reduce{C})$ has the same feedthrough term $D$ as $\system$. 
Therefore, given the asymptotically stable system $\system$ in~\eqref{eqn:LTI} with the transfer function $\transferFunction$ in~\eqref{eqn:transferFunction}, our goal is to find $\systemPas\big(\reduce{C}\big)$ in~\eqref{eqn:LTI:modifiedC} with the transfer function $\transferFunctionPas\big(\cdot;\reduce{C}\big)$ in~\eqref{eqn:transferFunction} that solves the minimization problem 
\begin{equation}
	\label{eqn:passivationProblemwithH2}
	\min_{\reduce{C}\in\R^{\inpVarDim\times\stateDim}} \big\|\transferFunction-\transferFunctionPas\big(\cdot,\reduce{C}\big)\big\|_{\Htwo} \qquad\text{such that}\qquad \systemPas(\reduce{C}) \text{ is passive.}
\end{equation}
For the forthcoming analysis, we define the feasible set as
\begin{equation}
	\label{eqn:feasibleSet}
	\feasibleSet \vcentcolon= \left\{\reduce{C}\in\R^{\inpVarDim\times\stateDim} \;\Big| \; \systemPas\big(\reduce{C}\big) \text{ is passive}\right\}.
\end{equation}

\begin{remark}
 In the passivation approach we will develop, it is in principle possible to consider the case that the matrix $A$ has some semi-simple eigenvalues on the imaginary axis. If the corresponding residues of the transfer function $G$ are Hermitian and positive semi-definite, then the subsystem corresponding to these
 pole-residue pairs with purely imaginary poles is passive \cite{AndV73}. Then we could truncate this subsystem from the original system, passivate the remaining asymptotically stable subsystem using, e.g., our algorithm, and then add the two passive subsystems to obtain an overall passive system. 
\end{remark}

\subsection{Reformulation of the optimization problem and gradient computation}
\label{sec:passivation:optproblem}

We note, in the view of the \KYP inequality~\eqref{eqn:KYP}, that we need to assume $D + D^\T\in\Spsd{\inpVarDim}$ for $\calH_2$-optimal passivation.
With the previous discussion and \Cref{thm:passParameterization}, let $M\in\R^{\inpVarDim \times \inpVarDim}$ denote the square-root of $D+D^\T\in\Spsd{\inpVarDim}$ and define the system
\begin{equation}
	\label{eqn:LTI:passive:H2}
	\reduce{\system}(L): \quad \begin{cases}
		\dot{\state}(t) = A\state(t) + B\inpVar(t),\\
		\outVar(t) = \reduce{C}(L)\state(t) + D\inpVar(t),
	\end{cases}
\end{equation}
where 
\begin{equation}
	\label{eqn:Cparam}
	\reduce{C}(L) \vcentcolon= B^\T \calL^{-1}\big(-LL^\T\big) + ML^\T
\end{equation}
and $\calL$ is the bijective Lyapunov operator defined in~\eqref{eqn:LyapOperator}. By construction, \eqref{eqn:LTI:passive:H2} is guaranteed to be passive for any $L\in\R^{\stateDim\times\inpVarDim}$; see \Cref{thm:passParameterization}. The associated transfer function is given by
\begin{equation}
	\label{eqn:LTI:passive:H2:transferFunction}
	\transferFunctionPas(s;L) \vcentcolon= \reduce{C}(L)(s I_{\stateDim}-A)^{-1}B + D.
\end{equation}
Note that we employ a slight abuse of notation here, as we denote the passivated system, the passivated transfer function, and the passivating output matrix depending on $L$ using the same symbols as the quantities depending on $\reduce{C}$ in~\eqref{eqn:LTI:modifiedC} and~\eqref{eqn:transferFunction}. However, we will explicitly write the dependence in the following to prevent confusion.

The constrained optimization problem~\eqref{eqn:passivationProblemwithH2} can be equivalently formulated as the unconstrained minimization problem
\begin{equation}
	\label{eqn:passivationProblem:H2}
	\min_{L\in\R^{\stateDim\times\inpVarDim}} \objFunc(L)\qquad\text{with}\qquad \objFunc(L) \vcentcolon= \big\|\transferFunction-\transferFunctionPas(\cdot;L)\big\|_{\calH_2}^2.
\end{equation}
Before establishing existence and uniqueness of the solution of~\eqref{eqn:passivationProblem:H2}, we first present an alternative description of the objective functional and its gradient, which we will later use for our numerical algorithm.

\begin{theorem}
	\label{thm:H2passivation}
	Assume that $\Sigma$ in~\eqref{eqn:LTI} is asymptotically stable and let $\ctrlG\in\Spsd{\stateDim}$ denote the controllability Gramian of~\eqref{eqn:LTI}. Then $\objFunc$ defined in~\eqref{eqn:passivationProblem:H2} is 
	differentiable, and
	\begin{subequations}
		\begin{align} 
		\label{eqn:costFunctionalH2}
		\objFunc(L) &= \tr\left(\big(C-\reduce{C}(L)\big)\ctrlG\big(C^\T - \reduce{C}(L)^\T\big)\right),\\
		\label{eqn:costFunctionalH2:gradient}
		\nabla\objFunc(L) &= 2\mathcal{X}L - 2 \ctrlG\big(C^\T - \reduce{C}(L)^\T\big)M,
		\end{align}
	\end{subequations}
	where $\calX \in \R^{\stateDim \times \stateDim}$ is the unique solution of the Lyapunov equation
	\begin{equation}
		\label{eqn:h2:grad:lyap}
		A \calX + \calX A^\T - \ctrlG\big(C^\T - \reduce{C}(L)^\T\big)B^\T - B\big(C - \reduce{C}(L)\big)\ctrlG = 0.
	\end{equation}
\end{theorem}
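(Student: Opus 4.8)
The plan is to first collapse the objective into a single trace expression and then differentiate. The key structural observation is that in the difference $\transferFunction(s) - \transferFunctionPas(s;L)$ the feedthrough terms cancel, since both systems share the same $D$; what remains is $\big(C-\reduce{C}(L)\big)(sI_\stateDim - A)^{-1}B$, i.e. the transfer function of a realization with state matrix $A$, input matrix $B$, output matrix $E(L) := C - \reduce{C}(L)$, and zero feedthrough. Because this difference has no feedthrough, its $\calH_2$-norm is finite, and the standard Gramian formula for the $\calH_2$-norm yields immediately $\objFunc(L) = \tr\big(E(L)\ctrlG E(L)^\T\big)$, which is~\eqref{eqn:costFunctionalH2}; here $\ctrlG$ is the controllability Gramian of $(A,B)$, and it is exactly the controllability Gramian of the difference system because $A$ and $B$ are left unchanged. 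Differentiability is clear: $\reduce{C}(L)$ depends on $L$ through the bounded linear map $\calL^{-1}$ composed with the quadratic $L\mapsto -LL^\T$ and the linear $L \mapsto ML^\T$, so $L \mapsto E(L)$ is smooth and $\objFunc$ is a composition of smooth maps.

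For the gradient I would compute the Fréchet differential $D\objFunc(L)[\delta L]$ and read off $\nabla\objFunc(L)$ via $D\objFunc(L)[\delta L] = \langle \nabla\objFunc(L), \delta L\rangle$ with respect to the trace inner product. Applying the product rule to $\tr\big(E\ctrlG E^\T\big)$ and using the symmetry of $\ctrlG$ gives $D\objFunc(L)[\delta L] = 2\tr\big((DE[\delta L])\,\ctrlG\, E^\T\big)$, where $DE[\delta L] = -B^\T\calL^{-1}\big(-(\delta L)L^\T - L(\delta L)^\T\big) - M(\delta L)^\T$. The second summand is straightforward: a cyclic permutation turns $-2\tr\big(M(\delta L)^\T \ctrlG E^\T\big)$ into $-2\langle \delta L, \ctrlG E^\T M\rangle$, contributing $-2\ctrlG\big(C^\T - \reduce{C}(L)^\T\big)M$ to the gradient, which is precisely the second term of~\eqref{eqn:costFunctionalH2:gradient}.

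The main work, and the step I expect to be the obstacle, is the first summand, in which $\calL^{-1}$ is buried inside the trace. The idea is to move $\calL^{-1}$ off of the perturbation using the adjoint of $\calL$ with respect to the trace inner product; a short computation gives $\calL^*(Y) = AY + YA^\T$, so that $(\calL^{-1})^* = (\calL^*)^{-1}$. Writing $W := -(\delta L)L^\T - L(\delta L)^\T$ and rearranging the trace cyclically, the first summand becomes $-2\langle \calL^{-1}(W), BE\ctrlG\rangle = -2\langle W, (\calL^*)^{-1}(BE\ctrlG)\rangle$. The decisive point is that $W$ is symmetric, so the pairing is unchanged upon symmetrizing the second argument, and since $(\calL^*)^{-1}$ commutes with transposition, the symmetrization of $(\calL^*)^{-1}(BE\ctrlG)$ equals $\tfrac12(\calL^*)^{-1}\big(BE\ctrlG + \ctrlG E^\T B^\T\big) = \tfrac12\calX$, where $\calX$ is exactly the unique symmetric solution of the Lyapunov equation~\eqref{eqn:h2:grad:lyap}. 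This turns the summand into $-\langle W, \calX\rangle$, and expanding $W$ and collecting the two resulting terms with one more cyclic permutation gives $2\langle \delta L, \calX L\rangle$, i.e. the contribution $2\calX L$. Adding the two contributions reproduces~\eqref{eqn:costFunctionalH2:gradient}. I would be careful to verify that the right-hand side of~\eqref{eqn:h2:grad:lyap} is symmetric (it is, since $\ctrlG$ is symmetric), so that $\calX$ is symmetric and both the symmetrization and the final cyclic manipulation are justified.
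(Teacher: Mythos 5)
Your proposal is correct, and while it rests on the same underlying fact as the paper's proof---Lyapunov duality---it organizes the gradient computation along a genuinely different and tidier route. For~\eqref{eqn:costFunctionalH2} the substance is identical: the paper builds the $2\stateDim$-dimensional error system $(\Aerr,\Berr,\Cerr(L),0)$ and reads off its block Gramian, whereas you note directly that the difference of transfer functions admits the $\stateDim$-dimensional realization $\big(A,B,C-\reduce{C}(L),0\big)$ whose controllability Gramian is $\ctrlG$ itself; both yield the same trace formula. For the gradient, the paper expands $\objFunc(L+\Delta_L)-\objFunc(L)$ termwise, introduces three auxiliary Lyapunov equations ($AY+YA^\T+BC\ctrlG=0$, $AZ+ZA^\T+\ctrlG XBB^\T=0$, $AW+WA^\T+BML^\T\ctrlG=0$), converts each trace involving the induced perturbation $\Delta_X$ into one involving $\Delta_L$ via the dual-Lyapunov trace identity of \Cref{lem:lyapunovTrace}, and then assembles $\calX=-(Y+Y^\T)+(Z+Z^\T)+(W+W^\T)$, verifying a posteriori that this combination solves~\eqref{eqn:h2:grad:lyap}. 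You instead recognize that this trace identity is precisely the adjointness relation $\langle\calL^{-1}(W),Y\rangle=\langle W,(\calL^{*})^{-1}(Y)\rangle$ with $\calL^{*}(Y)=AY+YA^\T$, apply it a single time with the perturbation $W=-(\delta L)L^\T-L(\delta L)^\T$ kept intact, and exploit the symmetry of $W$ together with the transpose-equivariance of $(\calL^{*})^{-1}$ to identify $\tfrac12(\calL^{*})^{-1}\big(BE\ctrlG+\ctrlG E^\T B^\T\big)=\tfrac12\calX$ in one stroke, with $\calX$ the unique symmetric solution of~\eqref{eqn:h2:grad:lyap} (unique and symmetric since $A$ is Hurwitz and the right-hand side is symmetric, as you correctly flag). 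All the auxiliary facts you invoke check out: $\calL^{*}$ is the trace-pairing adjoint, it commutes with transposition, and your sign bookkeeping reproduces both terms of~\eqref{eqn:costFunctionalH2:gradient}. What your route buys is a one-shot, conceptually transparent identification of $\calX$ without juggling three auxiliary equations; what the paper's route buys is an explicit first-order expansion with remainder estimates, which simultaneously furnishes the Fr\'echet differentiability claim, whereas you dispatch differentiability by the composition-of-smooth-maps argument---perfectly valid in finite dimensions, since $\calL^{-1}$ is linear, $L\mapsto -LL^\T$ and $L\mapsto ML^\T$ are polynomial, and the objective is a quadratic trace form.
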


Before proving \Cref{thm:H2passivation}, we first recall the following auxiliary result to relate the inner products of solutions of Lyapunov equations.
\begin{lemma}[{\!\!\cite[Lem.~A.1]{WL99}}]
	\label{lem:lyapunovTrace}
	Let $A\in\R^{\stateDim\times \stateDim}$ be Hurwitz and $D,\,F\in\R^{\stateDim\times \stateDim}$  be such that the matrices $Y,\,Z\in\R^{\stateDim\times \stateDim}$ solve the Lyapunov equations 
	\begin{align*}
		A Y + YA^\T + D = 0 \qquad\text{and}\qquad
		A^\T Z + ZA + F = 0.
	\end{align*}
	Then, $\tr\big(D^\T Z\big) = \tr\big(F^\T Y\big)$.
\end{lemma}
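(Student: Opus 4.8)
The plan is to treat this as a purely algebraic trace identity and to prove it by substituting the two Lyapunov equations and collapsing both sides to one common expression, using only the transpose-invariance $\tr(M)=\tr(M^\T)$ and the cyclic-invariance $\tr(MN)=\tr(NM)$ of the trace. Since $A$ is Hurwitz, the Lyapunov operator $\calL$ is invertible (as already recorded in the excerpt), so $Y$ and $Z$ are the unique solutions and are well defined; I emphasize that $D$ and $F$ are \emph{not} assumed symmetric, hence neither are $Y$ and $Z$, and every transpose must therefore be tracked with care.

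First I would solve each equation for its inhomogeneity, $D = -(AY + YA^\T)$ and $F = -(A^\T Z + ZA)$, so that $F^\T = -(Z^\T A + A^\T Z^\T)$. Next, using $\tr(D^\T Z) = \tr(Z^\T D)$ (transpose-invariance applied to $M = D^\T Z$) and substituting $D$, I expand
\[
\tr(D^\T Z) = -\tr(Z^\T A Y) - \tr(Z^\T Y A^\T),
\]
and cyclic-invariance applied to the last term gives $\tr(Z^\T Y A^\T) = \tr(A^\T Z^\T Y)$. In parallel, substituting $F^\T$ into the right-hand side yields
\[
\tr(F^\T Y) = -\tr(Z^\T A Y) - \tr(A^\T Z^\T Y).
\]
Comparing the two displays shows that both traces equal $-\tr(Z^\T A Y) - \tr(A^\T Z^\T Y)$, which is exactly the asserted identity.

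The main obstacle here is not conceptual but bookkeeping: the only thing that can go wrong is mismanaging transposes, so essentially all the care goes into applying $\tr(\cdot)=\tr(\cdot^\T)$ and cyclicity in the correct order on the non-symmetric factors $Y$, $Z$. As a conceptually transparent alternative I could instead invoke the integral representations $Y = \int_0^\infty e^{tA} D\, e^{tA^\T}\dt$ and $Z = \int_0^\infty e^{tA^\T} F\, e^{tA}\dt$, which are valid precisely because $A$ is Hurwitz; then $\tr(F^\T Y)$ and $\tr(D^\T Z)$ become integrals whose integrands coincide pointwise in $t$ after one transpose and one cyclic permutation, again forcing the two quantities to agree. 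I would present the algebraic version as the primary argument, since it avoids having to justify the integral formula and its convergence.
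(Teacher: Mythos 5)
Your proof is correct, and there is nothing in the paper to compare it against: the paper states this lemma with a citation to \cite[Lem.~A.1]{WL99} and gives no proof of its own, so your argument fills in what the authors delegated to the literature. Your algebraic route is the standard one and your bookkeeping checks out: from $D=-(AY+YA^\T)$ and $F^\T=-(Z^\T A+A^\T Z^\T)$, the chain $\tr(D^\T Z)=\tr(Z^\T D)=-\tr(Z^\T AY)-\tr(Z^\T YA^\T)=-\tr(Z^\T AY)-\tr(A^\T Z^\T Y)$ matches $\tr(F^\T Y)$ term by term, with each transpose and cyclic permutation applied legitimately to the non-symmetric factors. One small observation worth making explicit: your calculation never uses the Hurwitz hypothesis, which enters only to guarantee that solutions $Y,Z$ exist and are unique (as the paper notes via invertibility of the Lyapunov operator $\calL$); the trace identity itself holds for \emph{any} matrices satisfying the two equations, so your lemma is in fact marginally more general than stated. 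Your integral alternative via $Y=\int_0^\infty \ee^{tA}D\,\ee^{tA^\T}\dt$ and $Z=\int_0^\infty \ee^{tA^\T}F\,\ee^{tA}\dt$ is also valid (there the Hurwitz condition is genuinely needed for convergence), and your choice to lead with the purely algebraic version is the right one for exactly the reason you give.
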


\begin{proof}[Proof of \Cref{thm:H2passivation}]
	Through direct algebraic manipulation, it can be verified that the controllability Gramian $\Perr$ of the error system $\systemErr(L) = (\Aerr,\Berr,\Cerr(L),0)$  defined via
	\begin{align*}
		\Aerr &\vcentcolon= \begin{bmatrix}
			A & 0\\
			0 & A
		\end{bmatrix}, &
		\Berr &\vcentcolon= \begin{bmatrix}
			B\\
			B
		\end{bmatrix}, &
		\Cerr(L) &\vcentcolon= \begin{bmatrix}
			C & -\reduce{C}(L)
		\end{bmatrix}
	\end{align*}
	is given as $\Perr = \begin{smallbmatrix}\ctrlG & \ctrlG\\\ctrlG & \ctrlG\end{smallbmatrix}$. Hence, using standard results on the $\calH_2$-norm, see, e.g., \cite{Ant05a,VanGA08}, we obtain
	\begin{align*}
		\objFunc(L) &= \tr\left(\Cerr(L) \Perr \Cerr(L)^\T\right) = \tr\left(\left(C-\reduce{C}(L)\right)\calP\left(C^\T - \reduce{C}(L)^\T\right)\right),
	\end{align*}
	which is the first statement. 

	Inserting~\eqref{eqn:passParam:C} and setting $X \vcentcolon= \calL^{-1}\big(-LL^\T\big)$ yields
	\begin{multline*}
			\objFunc(L) =  \tr\big(C\ctrlG C^\T\big) - 2\tr\big(B^\T X \ctrlG C\tran + ML^\T \ctrlG C^\T\big) \\
			+ \tr\big(B^\T X \ctrlG  X B + 2 B^\T X \ctrlG  L M^\T + M L^\T \ctrlG  L M^\T\big).
	\end{multline*}
	We calculate the Fr\'echet derivative of $\objFunc$, which is the usual derivative  for finite-dimensional spaces. To this end we introduce a perturbation $\Delta_L$ for $L$, which invokes the symmetric perturbation $\Delta_X \vcentcolon= \calL^{-1}(-(L+\Delta_L)(L+\Delta_L)^\T) - \calL^{-1}(-LL^\T)$ for~$X$. Using the cyclic property of the trace, we obtain
	\begin{equation*}
		\begin{aligned}  
			\objFunc(L+\Delta_L) - \objFunc(L) &= - 2\tr\big(\ctrlG C^\T B^\T \Delta_X\big) - 2 \tr\big(M^\T C \ctrlG  \Delta_L\big)\\
		 	&\qquad + 2\tr\big(BB^\T X \ctrlG  \Delta_X\big) + \mathcal{O}\big({\|\Delta_X\|}_\mathrm{F}^2\big) \\
			&\qquad + 2\tr\big(M^\T B^\T X \ctrlG  \Delta_L\big) + 2 \tr\big(\ctrlG LM^\T B^\T \Delta_X\big) + \mathcal{O}\big({\|\Delta_X\|}_{\mathrm{F}} {\|\Delta_L\|}_\mathrm{F}\big)\\
			&\qquad + 2\tr\big(M^\T M L^\T \ctrlG  \Delta_L\big) + \mathcal{O}\big({\|\Delta_L\|}_\mathrm{F}^2\big).
		\end{aligned}
	\end{equation*}
	The definition of $\Delta_X$ is equivalent to
	\begin{align*}
		A^\T \Delta_X + \Delta_X A + \big(\Delta_L L^\T + L\Delta_L^\T + \Delta_L\Delta_L^\T\big) = 0.
	\end{align*}
	Hence, applying \Cref{lem:lyapunovTrace} together with the additional Lyapunov equations
	\begin{equation*}
		\begin{aligned}
			AY + YA^\T + BC\ctrlG  &= 0, &
			AZ + ZA^\T + \ctrlG XBB^\T &= 0, &
			AW + WA^\T + BML^\T \ctrlG  &= 0
		\end{aligned}
	\end{equation*}
	yields
	\begin{equation*}
		\small
		\begin{aligned}
			\tr\big(\ctrlG C^\T B^\T \Delta_X\big) &= \tr\big(\big(\Delta_L L^\T + L \Delta_L^\T + \Delta_L\Delta_L^\T\big) Y\big) = \tr\big(L^\T \big(Y+Y^\T\big) \Delta_L\big) + \calO\big({\|\Delta_L\|}_{\mathrm{F}}^2\big),\\
			\tr\big(BB^\T X \ctrlG  \Delta_X\big) &= \tr\big(\big(\Delta_L L^\T + L \Delta_L^\T + \Delta_L\Delta_L^\T\big) Z\big)= \tr\big(L^\T \big(Z+Z^\T\big) \Delta_L\big) + \calO\big({\|\Delta_L\|}_{\mathrm{F}}^2\big),\\
			\tr\big(\ctrlG LM^\T B^\T \Delta_X\big) &= \tr\big(\big(\Delta_L L^\T + L \Delta_L^\T + \Delta_L\Delta_L^\T\big) W\big) = \tr\big(L^\T \big(W+W^\T\big) \Delta_L\big) + \calO\big({\|\Delta_L\|}_{\mathrm{F}}^2\big).
		\end{aligned}
	\end{equation*}
	Setting $\calX \vcentcolon= - \big(Y+Y^\T\big) + \big(Z+Z^\T\big) + \big(W+W^\T\big)$ and using~\eqref{eqn:passParam:C}, we obtain 
	\begin{multline*}
		A\calX + \calX A^\T = \big(BC\ctrlG  - \ctrlG XBB^\T - BML^\T \ctrlG \big) + \big(BC\ctrlG  - \ctrlG XBB^\T - BML^\T \ctrlG \big)^\T\\
			= \ctrlG \big(C^\T - \reduce{C}(L)^\T\big)B^\T + B\big(C-\reduce{C}(L)\big)\ctrlG.
	\end{multline*}
	This shows that $\calX$ solves the Lyapunov equation~\eqref{eqn:h2:grad:lyap}.
	Plugging these identities back into the expression for $\objFunc(L+\Delta_L) - \objFunc(L)$ above yields
	\begin{align*}
			\objFunc(L+\Delta_L) - \objFunc(L) &= - 2\tr\big(L^\T \big(Y+Y^\T\big) \Delta_L\big) - 2 \tr\big(M^\T C \ctrlG  \Delta_L\big) + 2\tr\big(L^\T \big(Z+Z^\T\big) \Delta_L\big)\\
			&\qquad + 2\tr\big(M^\T B^\T X \ctrlG  \Delta_L\big) + 2 \tr\big(L^\T \big(W+W^\T\big) \Delta_L\big)\\
			&\qquad + 2\tr\big(M^\T M L^\T \ctrlG  \Delta_L\big)
			 + \mathcal{O}\left(\big({\|\Delta_X\|}_\mathrm{F} + {\|\Delta_L\|}_{\mathrm{F}}\big)^2\right)\\
			&= 2\tr\big(\big(L^\T \mathcal{X} - M^\T C\ctrlG  + M^\T B^\T X \ctrlG  + M^\T M L^\T \ctrlG \big) \Delta_L \big)\\
			&\qquad + \mathcal{O}\left(\big({\|\Delta_X\|}_\mathrm{F} + {\|\Delta_L\|}_{\mathrm{F}}\big)^2\right).
	\end{align*}
	Since $\Delta_L \to 0$ implies $\Delta_X \to 0$, and
	\begin{equation*}
	 \R^{n \times m} \to \R, \qquad \Delta_L \mapsto \tr\big(\big(L^\T \mathcal{X} - M^\T C\ctrlG  + M^\T B^\T X \ctrlG  + M^\T M L^\T \ctrlG \big) \Delta_L \big)
	\end{equation*}
	is a bounded linear operator, we can conclude Fr\'echet differentiability of $\objFunc$.
	Moreover, using again~\eqref{eqn:passParam:C}, we conclude that the gradient of $\objFunc$ is given by~\eqref{eqn:costFunctionalH2:gradient}.
\end{proof}

\begin{theorem}
	\label{thm:H2solvability}
	Consider system~\eqref{eqn:LTI} and assume it is asymptotically stable and controllable. Furthermore, let $D + D^\T\in\Spsd{\inpVarDim}$. Then, the minimization problem~\eqref{eqn:passivationProblem:H2} is solvable and for any minimizers $L_1,L_2\in\R^{\stateDim\times\inpVarDim}$, we have $\reduce{C}(L_1) = \reduce{C}(L_2)$, i.e., the minimizing passive system is unique.
\end{theorem}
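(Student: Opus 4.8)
The plan is to use \Cref{thm:H2passivation} to recast \eqref{eqn:passivationProblem:H2} as the problem of projecting the fixed matrix $C$ onto the feasible set $\feasibleSet$ in a suitable inner product, and then to invoke the classical existence-and-uniqueness theory for projections onto nonempty closed convex sets. Concretely, I would first rewrite the objective in terms of $\reduce{C}$ alone: by \eqref{eqn:costFunctionalH2}, $\objFunc(L) = \tr\big((C-\reduce{C}(L))\ctrlG(C^\T-\reduce{C}(L)^\T)\big) = {\|C-\reduce{C}(L)\|}_{\ctrlG}^2$, where ${\|\cdot\|}_\ctrlG$ is the norm induced by the weighted inner product \eqref{eqn:weightedInnerProduct} with weight $\ctrlG$. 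Since $A$ is Hurwitz and $(A,B)$ is controllable, the controllability Gramian satisfies $\ctrlG\succ 0$, so $\langle\cdot,\cdot\rangle_\ctrlG$ is a genuine inner product. Setting $f(\reduce{C}):={\|C-\reduce{C}\|}_\ctrlG^2$ and using \Cref{thm:passParameterization} — together with the observation that any two square roots of $D+D^\T$ differ by a right orthogonal factor that can be absorbed into $L$, so that the image $\{\reduce{C}(L):L\in\R^{\stateDim\times\inpVarDim}\}$ is exactly $\feasibleSet$ — I obtain $\min_L\objFunc(L)=\min_{\reduce{C}\in\feasibleSet}f(\reduce{C})$, with $L$ minimizing $\objFunc$ if and only if $\reduce{C}(L)$ minimizes $f$ over $\feasibleSet$. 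It therefore suffices to show that $f$ attains a unique minimum on $\feasibleSet$.

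Next I would establish that $\feasibleSet$ is nonempty, closed, and convex. Nonemptiness is immediate: $\reduce{C}=0$ gives the constant transfer function $\transferFunctionPas(\cdot;0)\equiv D$, whose modified Popov function equals $D+D^\T\succeq 0$, so $0\in\feasibleSet$. For closedness and convexity the key is the frequency-domain description. Because $(A,B)$ is controllable (a property depending only on $A$ and $B$, hence shared by every $\systemPas(\reduce{C})$), \Cref{thm:KYPlemma} makes passivity of $\systemPas(\reduce{C})$ equivalent to positive realness of $\transferFunctionPas(\cdot;\reduce{C})$; and since $A$ is Hurwitz there are no poles in $\C^+$, so
\[
\feasibleSet=\Big\{\reduce{C}\in\R^{\inpVarDim\times\stateDim}\;\Big|\;\transferFunctionPas(\lambda;\reduce{C})+\transferFunctionPas(\lambda;\reduce{C})\herm\succeq 0\ \text{for all }\lambda\in\C^+\Big\}.
\]
For each fixed $\lambda$ the map $\reduce{C}\mapsto\transferFunctionPas(\lambda;\reduce{C})=\reduce{C}(\lambda I_\stateDim-A)^{-1}B+D$ is affine, so the corresponding constraint set is the preimage of the Hermitian positive semi-definite cone under an affine map and is thus closed and convex. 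Hence $\feasibleSet$, being an intersection of such sets over $\lambda\in\C^+$, is closed and convex.

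With these ingredients I would conclude by the standard projection argument. The function $f$ is continuous, coercive — indeed $f(\reduce{C})\ge\lambda_{\min}(\ctrlG){\|C-\reduce{C}\|}_{\mathrm{F}}^2\to\infty$ as ${\|\reduce{C}\|}_{\mathrm{F}}\to\infty$ — and strictly convex because $\ctrlG\succ 0$. Coercivity together with closedness of $\feasibleSet$ yields, via a minimizing sequence, a minimizer $\reduce{C}_\star\in\feasibleSet$; convexity of $\feasibleSet$ and strict convexity of $f$ force it to be unique, since a distinct pair of minimizers would make their midpoint a strictly better feasible point. Choosing any $L_\star$ with $\reduce{C}(L_\star)=\reduce{C}_\star$ then gives a solution of \eqref{eqn:passivationProblem:H2}, and every minimizer $L$ satisfies $\reduce{C}(L)=\reduce{C}_\star$, which is exactly the asserted uniqueness of the minimizing passive system.

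I expect the main obstacle to be the closedness of $\feasibleSet$. Written as the projection onto the $\reduce{C}$-coordinates of the spectrahedron $\{(\reduce{C},X):X\succeq 0,\ \calW_{\systemPas(\reduce{C})}(X)\succeq 0\}$, it is a spectrahedral shadow, and projections of spectrahedra need not be closed in general; the naive $(\reduce{C},X)$-LMI route would therefore demand a separate, and somewhat delicate, uniform boundedness argument for the solutions $X$ (complicated further by the fact that $D+D^\T$ is only assumed positive semi-definite, so the Schur-complement/Riccati reduction is unavailable in the singular case). Passing instead to the positive-real characterization sidesteps this entirely and delivers closedness and convexity simultaneously; the only remaining care is in confirming that the parameterization image is all of $\feasibleSet$ and that $\ctrlG\succ 0$, both of which follow from the stated controllability and stability hypotheses.
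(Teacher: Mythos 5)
Your proposal is correct and follows the same overall skeleton as the paper's proof: rewrite $\objFunc$ via \eqref{eqn:costFunctionalH2} as the squared $\ctrlG$-weighted distance $\|C-\reduce{C}\|_{\ctrlG}^2$ (with $\ctrlG\succ 0$ by controllability), observe that minimizing over $L$ is equivalent to projecting $C$ onto $\feasibleSet$, show $\feasibleSet$ is nonempty, closed, and convex, and conclude existence and uniqueness of the projection, transporting the result back through \Cref{thm:passParameterization}. The genuine difference lies in the closedness step, and here your version is actually more careful than the paper's. The paper argues that since $(\reduce{C},X)\mapsto\calW_{\systemPas(\reduce{C})}(X)$ is continuous, $\feasibleSet$ is closed; as you correctly point out, this only yields closedness of the lifted set $\{(\reduce{C},X): X\succeq 0,\ \calW_{\systemPas(\reduce{C})}(X)\succeq 0\}$, and $\feasibleSet$ is its projection onto the $\reduce{C}$-coordinates --- a spectrahedral shadow, which need not be closed without a uniform bound on the certificates $X$ (delicate here since $D+D^\T$ may be singular). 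Your frequency-domain route, writing $\feasibleSet$ as the intersection over $\lambda\in\C^+$ of preimages of the Hermitian positive semi-definite cone under the affine maps $\reduce{C}\mapsto\transferFunctionPas(\lambda;\reduce{C})+\transferFunctionPas(\lambda;\reduce{C})\herm$, delivers closedness and convexity in one stroke; it uses controllability through \Cref{thm:KYPlemma} to identify passivity with positive realness, but that hypothesis is already assumed (and is needed anyway for $\ctrlG\succ 0$). Two further points in your favor: you make explicit that the image of the parameterization $L\mapsto\reduce{C}(L)$ with the \emph{fixed} square root $M$ is all of $\feasibleSet$, by absorbing the orthogonal ambiguity $M'=MU$ into $L$ --- a step the paper compresses into ``follows from \Cref{thm:passParameterization}''; and your coercivity-plus-strict-convexity argument replaces the paper's appeal to the Hilbert projection theorem with an equivalent elementary argument. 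In short, same strategy, but your closedness argument patches a genuine gap in the paper's one-line justification.
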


\begin{proof}
	To prove the claim, we consider the constrained optimization problem~\eqref{eqn:passivationProblemwithH2} and use~\eqref{eqn:costFunctionalH2}, \eqref{eqn:weightedInnerProduct}, and~\eqref{eqn:feasibleSet} to rewrite it as
	\begin{equation}
		\label{eqn:passivationProblem:H2:constrained}
		\min_{\reduce{C}\in\R^{\inpVarDim\times\stateDim}} {\|C-\reduce{C}\|}_{\ctrlG}\qquad\text{such that } \reduce{C}\in\feasibleSet.
	\end{equation}
	Due to \Cref{thm:KYPlemma} we observe that $\reduce{C}\in\feasibleSet$ if and only if there exists $X\in\Spsd{\stateDimRed}$ such that the \KYP inequality~\eqref{eqn:KYP} is satisfied. As the mapping
	\begin{align*}
		(\reduce{C},X) \mapsto \begin{bmatrix}
			-A^\T X - XA & \reduce{C}^\T - XB\\
        	\reduce{C}-B^\T X & D+D^\T
		\end{bmatrix}
	\end{align*}
	is continuous, we note that $\feasibleSet$ is closed. Moreover, direct verification demonstrates that $\feasibleSet$ is convex and non-empty (since $0\in\feasibleSet$). Hence, the optimization problem~\eqref{eqn:passivationProblem:H2:constrained} is uniquely solvable due to the Hilbert projection theorem~\cite[Thm.~4.10]{Rud87}.
	The result now follows from \Cref{thm:passParameterization}.
\end{proof}

\begin{remark}
	The constrained optimization problem~\eqref{eqn:passivationProblem:H2:constrained} is considered in the literature, albeit in a slightly reformulated way; see for instance \cite[Sec.~5.4.1]{GriS21}. Specifically, the matrix $\reduce{C}$ is typically considered as a perturbed variant of the original matrix $C$, i.e., $\reduce{C} = C + \Delta P^{-1}$ for some $\Delta\in\R^{\inpVarDim\times\stateDim}$, where $P$ is a Cholesky factor of the controllability Gramian, i.e., $\ctrlG = PP^\T$. With this change of coordinates, we obtain $\|C-\reduce{C}\|_{\ctrlG} = \|\Delta\|_{\mathrm{F}}$ 
	and hence simply minimize the (weighted) perturbation in the Frobenius norm. 
\end{remark}

\begin{remark}
	We can define a new objective functional and its associated gradient by replacing the controllability Gramian with any symmetric positive semi-definite matrix $\ctrlGweighted$, for instance, given by the frequency-limited or time-limited controllability Gramian; cf.~\cite{GawJ90,BKS16}. We could also consider another dynamical system 
	$W(s) = C_{\mathrm{w}}(sI-A_{\mathrm{w}})^{-1}B_{\mathrm{w}} + D_{\mathrm{w}}$ as a weight, where $A_{\mathrm{w}}$ is Hurwitz. Then we can define the weighted $\calH_2$ error
	\begin{align*}
		\big\|\transferFunction - \transferFunctionPas(\cdot;L)\big\|_{W}^2 \vcentcolon= \big\|\big(\transferFunction - \transferFunctionPas(\cdot;L)\big)W\big\|_{\Htwo}^{2} = \big\|C-\reduce{C}(L)\big\|_{\ctrlGweighted}^2
	\end{align*}
	with weighted controllability Gramian
	\begin{align*}
		\begin{bmatrix}
			A & BC_{\mathrm{w}}\\
			0 & A_{\mathrm{w}}
		\end{bmatrix}\begin{bmatrix}
			\ctrlGweighted & \ctrlG_1\\
			\ctrlG_1^\T & \ctrlG_2
		\end{bmatrix} + \begin{bmatrix}
			\ctrlGweighted & \ctrlG_1\\
			\ctrlG_1^\T & \ctrlG_2
		\end{bmatrix}\begin{bmatrix}
			A^\T & 0\\
			C_{\mathrm{w}}^\T B^\T & A_{\mathrm{w}}^\T
		\end{bmatrix} + \begin{bmatrix}
			BD_{\mathrm{w}}\\
			B_{\mathrm{w}}
		\end{bmatrix}\begin{bmatrix}
			BD_{\mathrm{w}}\\
			B_{\mathrm{w}}
		\end{bmatrix}^\T = 0.
	\end{align*}
	We refer to \cite[Sec.~10.9]{GriG16} for further details.
\end{remark}

\subsection{Numerical considerations}
\label{sec:accelerations}
Suppose we use a gradient-based optimization strategy to solve the minimization problem~\eqref{eqn:passivationProblem:H2}. In that case, we need to solve two Lyapunov equations in each optimization step: the first one defined by~\eqref{eqn:Lure:a} related to~\eqref{eqn:Cparam}, and the second by~\eqref{eqn:h2:grad:lyap}. Solving these Lyapunov equations is thus the main computational cost. We refer the reader to \cite{Sim16,BS13a} for an overview of state-of-the-art methods for solving Lyapunov equations. Here, for the Lyapunov equation $A\herm X + X A = W$, we distinguish three scenarios\footnote{Here, we write the Lyapunov equation for complex matrices, since the diagonalizing transformation we consider may result in a complex matrix $A$.}.
First, the matrix $A$ is diagonal, say $A = \diag(\lambda_1,\ldots,\lambda_n)$. This can be achieved, e.g., if the model is obtained from data 
 and such a diagonal form is enforced during the data-driven modeling stage, 
or if the model is diagonalizable and the transformation to diagonal form is done before the passivation. In this case, a closed-form solution is given by
\begin{equation*}
	x_{ij} = \frac{w_{ij}}{\overline{\lambda_i} + \lambda_j},
\end{equation*}
where $w_{ij}$ corresponds to the $(i,j)$ entry of the right-hand side $W$, giving a computational cost of $\calO\big(\stateDim^2\big)$ for solving the Lyapunov equation. If $A$ is not in diagonal form or the transformation to diagonal form is not possible, numerically not stable, or computationally infeasible, then we can either use a Krylov subspace method or an ADI method if $\stateDim$ is large and $A$ is sparse, or the Bartels-Stewart algorithm or Hammarling's method if $\stateDim$ is small or moderate and $A$ is dense. The latter two methods require $\calO(\stateDim^3)$ floating point operations, while the former two are iterative methods that are typically much more efficient in the large and sparse setting. A detailed complexity analysis is available, e.g., in \cite{LW02}.
Notably, the Bartels-Stewart algorithm first performs a Schur decomposition of $A\herm$. To avoid recomputing the Schur decomposition, we only compute it once and then transform the system accordingly. Whenever numerically reasonable, we recommend transforming the system to diagonal form, as this reduces the overall computational cost significantly; see \Cref{sec:numerics:smartphone}. If we want to retain real-valued matrices throughout the optimization, it is advisable to transform them into real-diagonal form.

\subsection{Local minimizers and uniqueness}
\label{sec:passivation:localMin}
In the proof of \Cref{thm:H2solvability}, we have seen that the constrained passivation problem~\eqref{eqn:passivationProblem:H2:constrained} is convex, and hence any local minimizer will be a global minimizer. Unfortunately, the transformation from the constrained optimization problem~\eqref{eqn:passivationProblem:H2:constrained} to the unconstrained optimization problem~\eqref{eqn:passivationProblem:H2} results in a non-convex minimization problem. Hence, if we apply gradient-based algorithms to solve the unconstrained minimization problem~\eqref{eqn:passivationProblem:H2}, we must investigate whether we have found the global minimum or are stuck in a non-global local minimum.
Recall from~\eqref{eqn:passParam:D} the definition of $M$: $D + D^\T = M M^\T$. As we will see, the situation is quite different for the cases $M=0$ and $M\neq 0$, so we analyze the two scenarios separately.

\subsubsection{Case $M=0$}
Albeit the resulting passive system is unique, the minimizer of~\eqref{eqn:passivationProblem:H2} may not be unique. This can be easily seen in the case $M = 0$. In this case, if $L^\star\in\R^{\stateDim\times\inpVarDim}$ is a minimizer, so is $L^\star U$ for any orthogonal matrix $U\in\R^{\inpVarDim\times\inpVarDim}$. 

\begin{proposition}
	\label{prop:locGlobMin}
	Assume $D + D^\T = 0$. Then any local minimizer of~\eqref{eqn:passivationProblem:H2} is a global minimizer.
\end{proposition}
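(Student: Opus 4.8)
The plan is to treat the case $D+D^\T=0$ (equivalently $M=0$) as a Burer--Monteiro-style factorization of a convex program and to certify global optimality through KKT conditions. When $M=0$, \eqref{eqn:Cparam} reduces to $\reduce{C}(L)=B^\T\calL^{-1}(-LL^\T)$, so both $\reduce{C}(L)$ and the objective depend on $L$ only through $P\vcentcolon=LL^\T\succeq0$. I would therefore write $\objFunc(L)=g(LL^\T)$ with $g(P)\vcentcolon=\|C-B^\T\calL^{-1}(-P)\|_{\ctrlG}^2$. Since $P\mapsto B^\T\calL^{-1}(-P)$ is linear and $\ctrlG\succ0$ (controllability), $g$ is convex, and $\min_{P\succeq0}g(P)$ is a reformulation of the constrained problem \eqref{eqn:passivationProblem:H2:constrained}: by \Cref{thm:passParameterization} the set $\{B^\T\calL^{-1}(-P):P\succeq0\}$ equals $\feasibleSet$, so the two problems share the same optimal value and the same unique optimal output matrix $\reduce{C}^o$ (\Cref{thm:H2solvability}). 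It then suffices to show that every local minimizer $L^\star$ of $\objFunc$ satisfies $\reduce{C}(L^\star)=\reduce{C}^o$.

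The engine is the optimality characterization for $\min_{P\succeq0}g(P)$: a feasible $P^\star$ is globally optimal if and only if $\nabla g(P^\star)\succeq0$ and $\nabla g(P^\star)P^\star=0$. Here $\nabla g(P)$ is exactly the matrix $\calX$ from \Cref{thm:H2passivation} with $M=0$, since $\nabla\objFunc(L)=2\,\nabla g(LL^\T)\,L=2\calX L$. Writing $P^\star=L^\star(L^\star)^\T$ and $\calX^\star=\nabla g(P^\star)$, first-order stationarity $\nabla\objFunc(L^\star)=2\calX^\star L^\star=0$ yields, after right-multiplication by $(L^\star)^\T$, the complementary-slackness identity $\calX^\star P^\star=0$. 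Thus the whole statement reduces to establishing dual feasibility $\calX^\star\succeq0$: once it holds, the KKT conditions are satisfied, $P^\star$ is globally optimal for $g$, and uniqueness of $\reduce{C}^o$ forces $\reduce{C}(L^\star)=\reduce{C}^o$, i.e.\ $L^\star$ is global.

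To obtain $\calX^\star\succeq0$ I would invoke the second-order necessary condition $\nabla^2\objFunc(L^\star)\succeq0$, which expands to $\|B^\T\calL^{-1}(-\delta P)\|_{\ctrlG}^2+\tr\big(\Delta^\T\calX^\star\Delta\big)\ge0$ for every $\Delta\in\R^{\stateDim\times\inpVarDim}$, where $\delta P=L^\star\Delta^\T+\Delta(L^\star)^\T$. If $L^\star$ is rank-deficient, pick a unit vector $z$ with $L^\star z=0$ and set $\Delta=wz^\T$; then $\delta P=0$ and the inequality collapses to $w^\T\calX^\star w\ge0$ for all $w$, giving $\calX^\star\succeq0$ and hence global optimality.

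The hard part will be the full-column-rank case $\rank L^\star=\inpVarDim$: now no direction $z$ with $L^\star z=0$ exists, and any perturbation moving $P$ outside $\range L^\star$ raises its rank beyond $\inpVarDim$, which the factorization over $\R^{\stateDim\times\inpVarDim}$ cannot represent; generic Burer--Monteiro theory even permits spurious full-rank minima, so this step must use the convex structure rather than the factorization alone. My plan here is to argue directly on the feasible set: $X^\star=\calL^{-1}(-P^\star)$ and an optimal $X^o$ with $B^\T X^o=\reduce{C}^o$ both lie in the convex cone $\{X\succeq0:A^\T X+XA\preceq0\}$, and along the segment $X_t=(1-t)X^\star+tX^o$ the objective $\|C-B^\T X\|_{\ctrlG}^2$, being strictly convex in $B^\T X$, strictly decreases toward the optimum whenever $\reduce{C}(L^\star)\ne\reduce{C}^o$. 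I would then lift this convex descent to a curve in $L$-space emanating from $L^\star$, contradicting local minimality. Establishing that such a lift exists near a full-rank $L^\star$---equivalently, that the descent direction toward $\reduce{C}^o$ lies in the range of the differential $\Delta\mapsto B^\T\calL^{-1}\big(-(L^\star\Delta^\T+\Delta(L^\star)^\T)\big)$---is the crux of the argument.
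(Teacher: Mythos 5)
Your lift to $P=LL^\T$, the identification $\nabla g(P)=\calX$ (which indeed follows from \Cref{lem:lyapunovTrace}), the KKT characterization of optimality of a convex function over the positive semi-definite cone, and the rank-deficient branch (taking $\Delta = wz^\T$ with $L^\star z=0$ so that $\delta P=0$ and the second-order necessary condition yields $\calX^\star\succeq 0$) are all correct. The genuine gap is the full-column-rank case, which you yourself flag as unresolved, and the repair you sketch would fail as stated. For $\rank L^\star = \inpVarDim < \stateDim$, the range of the differential $\Delta\mapsto L^\star\Delta^\T+\Delta(L^\star)^\T$ is exactly the set of symmetric $S$ with $\Pi^\perp S\,\Pi^\perp=0$, where $\Pi^\perp$ projects onto $\range(L^\star)^\perp$; since $P^o\succeq 0$ with $\Pi^\perp P^o\Pi^\perp=0$ would force $\range(P^o)\subseteq\range(L^\star)$, the descent direction $P^o-P^\star$ is in general not liftable, and the straight segment $(1-t)P^\star+tP^o$ can have rank up to $2\inpVarDim$, so it is not representable by a width-$\inpVarDim$ factor at all. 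Nor does the second-order condition help there: with $L^\star$ of full rank, $\delta P$ cannot be annihilated, and both terms of the Hessian quadratic form scale identically under $\Delta\mapsto t\Delta$, so one only obtains a lower bound on $w^\T\calX^\star w$ by a possibly negative quantity. As written, the proposal therefore does not prove the proposition.

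The paper's proof avoids the rank dichotomy entirely and is much shorter: since $M=0$ implies that both $\objFunc$ and $\reduce{C}$ in \eqref{eqn:Cparam} depend on $L$ only through $LL^\T$, the paper phrases local minimality directly in terms of neighborhoods of $\check{L}\check{L}^\T$ and then runs the entire argument in $P$-space. Because $\reduce{C}$ is affine in $P$ and $\|C-\cdot\|_{\ctrlG}$ is convex, along the convex combination of the Gram matrices one gets $\|C-\reduce{C}(L_\theta)\|_{\ctrlG} \le (1-\theta)\|C-\reduce{C}(\check{L})\|_{\ctrlG} + \theta\|C-\reduce{C}(L^\star)\|_{\ctrlG} < \|C-\reduce{C}(\check{L})\|_{\ctrlG}$ for every $\theta\in(0,1)$, and choosing $\theta$ small enough contradicts local minimality. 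Crucially, the paper never lifts the segment back to $L$-space---precisely the step your plan identifies as the crux and cannot supply. If you want to rescue your route, the missing ingredient is this change of locality notion: a local minimizer in $L$ is a local minimizer with respect to representable perturbations of $LL^\T$, since any two width-$\inpVarDim$ factors of the same Gram matrix differ by an orthogonal right factor and $\objFunc(LU)=\objFunc(L)$; once locality is measured in $P$, convexity finishes the proof with no KKT certificates and no case split.
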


\begin{proof}
	We note that for $D + D^\T=0$, the objective functional~\eqref{eqn:costFunctionalH2} and $\reduce{C}$ from~\eqref{eqn:Cparam} only depend on $LL^\T$, not on $L$.
	Let $\check{L}$ be a local minimizer of~\eqref{eqn:passivationProblem:H2}, i.e., there exists $\delta>0$ such that
	\begin{equation*}
		\objFunc\big(\check{L}\big) \leq \objFunc\big(L\big) \qquad\text{for all $LL^\T\in\R^{\stateDim\times\stateDim}$ with $\big\|LL^\T-\check{L}\check{L}^\T\big\| \leq \delta$}.
	\end{equation*}
	Further, assume that $\check{L}$ is not globally optimal and let $L^\star$ be a global minimizer. We define
	\begin{equation}
		L_\theta L_\theta^\T \vcentcolon= (1-\theta) \check{L}\check{L}^\T + \theta L^\star (L^\star)^\T,\qquad \theta\in(0,1).
	\end{equation}
	Let $\theta$ be such that $\big\|L_\theta L_\theta^\T - \check{L}\check{L}^\T\big\|\leq \delta$. 
	Then, using 
	$\reduce{C}\big(L_\theta\big) = (1-\theta)\reduce{C}\big(\check{L}\big) + \theta\reduce{C}\big(L^\star\big)$,
	we obtain
	\begin{align*}
		\big\|C-\reduce{C}(L_\theta)\big\|_{\ctrlG} &\leq (1-\theta)\big\|C-\reduce{C}\big(\check{L}\big) \big\|_{\ctrlG} + \theta \big\|C-\reduce{C}\big(L^\star\big)\big\|_{\ctrlG} \\ &< \big\|C-\reduce{C}\big(\check{L}\big)\big\|_{\ctrlG},
	\end{align*}
	which is a contradiction. Hence, $\check{L}$ has to be a global optimizer.
\end{proof}

\begin{example}
	\label{ex:Meq0}
	Consider the non-passive system
	\begin{equation*}
		A = \begin{bmatrix}
			-1 & \phantom{}4 \\
			-2 & -1
		\end{bmatrix}, \quad 
		B = \begin{bmatrix}
			1 \\
			2
		\end{bmatrix}, \quad
		C = \begin{bmatrix}
			1 & 0
		\end{bmatrix},
		\quad D = 0.
	\end{equation*}
	In~\Cref{fig:Meq0}, we visualize the squared $\calH_2$-error for all $L$ and passivating $\reduce{C}$, respectively. We see in~\Cref{fig:Meq0:L} that we have two global minima in the $L$ parameterization. Indeed, with 
 	$L^{\star}_{1} = [0.96, -0.48]^\T$ and $L^{\star}_{2} := -L^{\star}_{1}$, we obtain  the same 
	$\reduce{C}^{\star} \approx  [0.46, 0.80]$, which yields a global minimum of $\objFunc(L^{\star}_{1,2}) \approx 0.94$.
	\begin{figure}[htb]
		\centering
		\ref{leg:toy0}
		\\
		\begin{subfigure}{.5\linewidth}
			\centering

\begin{tikzpicture}[/tikz/background rectangle/.style={fill={rgb,1:red,1.0;green,1.0;blue,1.0}, fill opacity={1.0}, draw opacity={1.0}}, show background rectangle]
\begin{axis}[point meta max={nan}, point meta min={nan}, legend cell align={left}, legend columns={2}, title={}, title style={at={{(0.5,1)}}, anchor={south}, font={{\fontsize{14 pt}{18.2 pt}\selectfont}}, color={rgb,1:red,0.0;green,0.0;blue,0.0}, draw opacity={1.0}, rotate={0.0}, align={center}}, legend style={/tikz/every even column/.append style={column sep=0.5cm}, color={rgb,1:red,0.0;green,0.0;blue,0.0}, draw opacity={1.0}, line width={1}, solid, fill={rgb,1:red,1.0;green,1.0;blue,1.0}, fill opacity={1.0}, text opacity={1.0}, font={{\fontsize{8 pt}{10.4 pt}\selectfont}}, text={rgb,1:red,0.0;green,0.0;blue,0.0}, cells={anchor={center}}, at={(0.98, 0.98)}, anchor={north east}}, axis background/.style={fill={rgb,1:red,1.0;green,1.0;blue,1.0}, opacity={1.0}}, anchor={north west}, xshift={1.0mm}, yshift={-1.0mm}, width={\textwidth}, height={\textwidth}, scaled x ticks={false}, xlabel={$l_1^{\phantom{2}}$}, x tick style={color={rgb,1:red,0.0;green,0.0;blue,0.0}, opacity={1.0}}, x tick label style={color={rgb,1:red,0.0;green,0.0;blue,0.0}, opacity={1.0}, rotate={0}}, xlabel style={at={(ticklabel cs:0.5)}, anchor=near ticklabel, at={{(ticklabel cs:0.5)}}, anchor={near ticklabel}, font={{\fontsize{11 pt}{14.3 pt}\selectfont}}, color={rgb,1:red,0.0;green,0.0;blue,0.0}, draw opacity={1.0}, rotate={0.0}}, xmajorgrids={true}, xmin={-2}, xmax={2}, xticklabels={{$-2$,$-1$,$0$,$1$,$2$}}, xtick={{-2.0,-1.0,0.0,1.0,2.0}}, xtick align={inside}, xticklabel style={font={{\fontsize{8 pt}{10.4 pt}\selectfont}}, color={rgb,1:red,0.0;green,0.0;blue,0.0}, draw opacity={1.0}, rotate={0.0}}, x grid style={color={rgb,1:red,0.0;green,0.0;blue,0.0}, draw opacity={0.1}, line width={0.5}, solid}, axis x line*={left}, x axis line style={color={rgb,1:red,0.0;green,0.0;blue,0.0}, draw opacity={1.0}, line width={1}, solid}, scaled y ticks={false}, ylabel={$l_2^{\phantom{2}}$}, y tick style={color={rgb,1:red,0.0;green,0.0;blue,0.0}, opacity={1.0}}, y tick label style={color={rgb,1:red,0.0;green,0.0;blue,0.0}, opacity={1.0}, rotate={0}}, ylabel style={at={(ticklabel cs:0.5)}, anchor=near ticklabel, at={{(ticklabel cs:0.5)}}, anchor={near ticklabel}, font={{\fontsize{11 pt}{14.3 pt}\selectfont}}, color={rgb,1:red,0.0;green,0.0;blue,0.0}, draw opacity={1.0}, rotate={0.0}}, ymajorgrids={true}, ymin={-2}, ymax={2}, yticklabels={{$-2$,$-1$,$0$,$1$,$2$}}, ytick={{-2.0,-1.0,0.0,1.0,2.0}}, ytick align={inside}, yticklabel style={font={{\fontsize{8 pt}{10.4 pt}\selectfont}}, color={rgb,1:red,0.0;green,0.0;blue,0.0}, draw opacity={1.0}, rotate={0.0}}, y grid style={color={rgb,1:red,0.0;green,0.0;blue,0.0}, draw opacity={0.1}, line width={0.5}, solid}, axis y line*={left}, y axis line style={color={rgb,1:red,0.0;green,0.0;blue,0.0}, draw opacity={1.0}, line width={1}, solid}, colorbar={false}, legend to name={leg:toy0}]
    \addplot[forget plot]
        graphics[xmin={-3}, xmax={3}, ymin={-2}, ymax={2}] {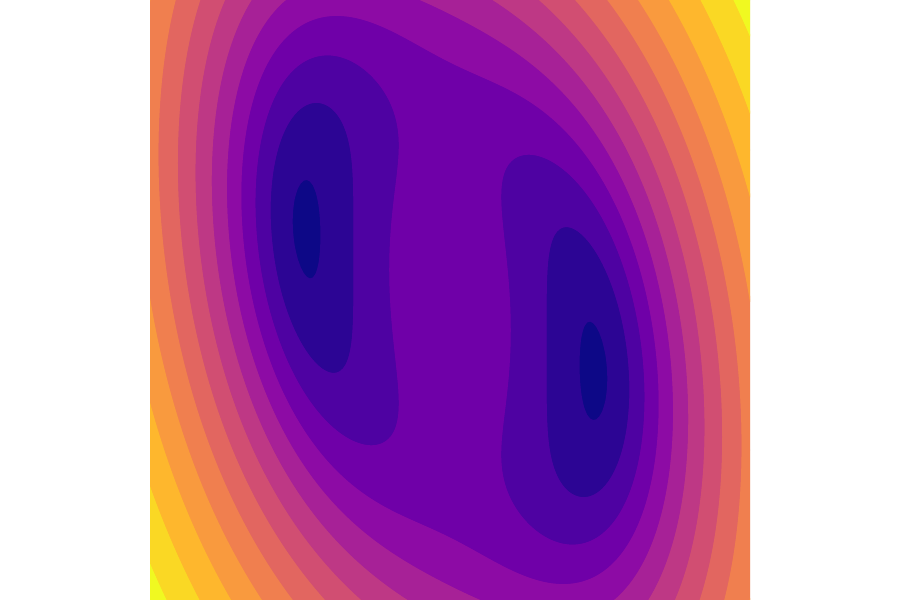}
        ;
    \addplot[color={rgb,1:red,0.6314;green,0.7882;blue,0.9569}, name path={79}, only marks, draw opacity={1.0}, line width={0}, solid, mark={*}, mark size={2.75 pt}, mark repeat={1}, mark options={color={rgb,1:red,0.0;green,0.0;blue,0.0}, draw opacity={0.0}, fill={rgb,1:red,0.6314;green,0.7882;blue,0.9569}, fill opacity={1.0}, line width={0.75}, rotate={0}, solid}]
        table[row sep={\\}]
        {
            \\
            -0.9607689228307221  0.48038446141514685  \\
        }
        ;
    \addlegendentry {$L_1^\star/\widehat{C}_1^\star$}
    \addplot[color={rgb,1:red,1.0;green,0.7059;blue,0.5098}, name path={80}, only marks, draw opacity={1.0}, line width={0}, solid, mark={*}, mark size={2.75 pt}, mark repeat={1}, mark options={color={rgb,1:red,0.0;green,0.0;blue,0.0}, draw opacity={0.0}, fill={rgb,1:red,1.0;green,0.7059;blue,0.5098}, fill opacity={1.0}, line width={0.75}, rotate={0}, solid}]
        table[row sep={\\}]
        {
            \\
            0.9607689228305435  -0.48038446141524266  \\
        }
        ;
    \addlegendentry {$L_2^\star/\widehat{C}_2^\star$}
\end{axis}
\end{tikzpicture}
			\caption{$L$-space}
			\label{fig:Meq0:L}
		\end{subfigure}\hfill
		\begin{subfigure}{.5\linewidth}
			\centering

\begin{tikzpicture}[/tikz/background rectangle/.style={fill={rgb,1:red,1.0;green,1.0;blue,1.0}, fill opacity={1.0}, draw opacity={1.0}}, show background rectangle]
\begin{axis}[point meta max={nan}, point meta min={nan}, legend cell align={left}, legend columns={1}, title={}, title style={at={{(0.5,1)}}, anchor={south}, font={{\fontsize{14 pt}{18.2 pt}\selectfont}}, color={rgb,1:red,0.0;green,0.0;blue,0.0}, draw opacity={1.0}, rotate={0.0}, align={center}}, legend style={color={rgb,1:red,0.0;green,0.0;blue,0.0}, draw opacity={1.0}, line width={1}, solid, fill={rgb,1:red,1.0;green,1.0;blue,1.0}, fill opacity={1.0}, text opacity={1.0}, font={{\fontsize{8 pt}{10.4 pt}\selectfont}}, text={rgb,1:red,0.0;green,0.0;blue,0.0}, cells={anchor={center}}, at={(1.02, 1)}, anchor={north west}}, axis background/.style={fill={rgb,1:red,1.0;green,1.0;blue,1.0}, opacity={1.0}}, anchor={north west}, xshift={1.0mm}, yshift={-1.0mm}, width={\textwidth}, height={\textwidth}, scaled x ticks={false}, xlabel={$\reduce{c}_1$}, x tick style={color={rgb,1:red,0.0;green,0.0;blue,0.0}, opacity={1.0}}, x tick label style={color={rgb,1:red,0.0;green,0.0;blue,0.0}, opacity={1.0}, rotate={0}}, xlabel style={at={(ticklabel cs:0.5)}, anchor=near ticklabel, at={{(ticklabel cs:0.5)}}, anchor={near ticklabel}, font={{\fontsize{11 pt}{14.3 pt}\selectfont}}, color={rgb,1:red,0.0;green,0.0;blue,0.0}, draw opacity={1.0}, rotate={0.0}}, xmajorgrids={true}, xmin={-0.5}, xmax={1.5}, xticklabels={{$-0.5$,$0.0$,$0.5$,$1.0$,$1.5$}}, xtick={{-0.5,0.0,0.5,1.0,1.5}}, xtick align={inside}, xticklabel style={font={{\fontsize{8 pt}{10.4 pt}\selectfont}}, color={rgb,1:red,0.0;green,0.0;blue,0.0}, draw opacity={1.0}, rotate={0.0}}, x grid style={color={rgb,1:red,0.0;green,0.0;blue,0.0}, draw opacity={0.1}, line width={0.5}, solid}, axis x line*={left}, x axis line style={color={rgb,1:red,0.0;green,0.0;blue,0.0}, draw opacity={1.0}, line width={1}, solid}, scaled y ticks={false}, ylabel={$\reduce{c}_2$}, y tick style={color={rgb,1:red,0.0;green,0.0;blue,0.0}, opacity={1.0}}, y tick label style={color={rgb,1:red,0.0;green,0.0;blue,0.0}, opacity={1.0}, rotate={0}}, ylabel style={at={(ticklabel cs:0.5)}, anchor=near ticklabel, at={{(ticklabel cs:0.5)}}, anchor={near ticklabel}, font={{\fontsize{11 pt}{14.3 pt}\selectfont}}, color={rgb,1:red,0.0;green,0.0;blue,0.0}, draw opacity={1.0}, rotate={0.0}}, ymajorgrids={true}, ymin={-0.5}, ymax={1.5}, yticklabels={{$-0.5$,$0.0$,$0.5$,$1.0$,$1.5$}}, ytick={{-0.5,0.0,0.5,1.0,1.5}}, ytick align={inside}, yticklabel style={font={{\fontsize{8 pt}{10.4 pt}\selectfont}}, color={rgb,1:red,0.0;green,0.0;blue,0.0}, draw opacity={1.0}, rotate={0.0}}, y grid style={color={rgb,1:red,0.0;green,0.0;blue,0.0}, draw opacity={0.1}, line width={0.5}, solid}, axis y line*={left}, y axis line style={color={rgb,1:red,0.0;green,0.0;blue,0.0}, draw opacity={1.0}, line width={1}, solid}, colorbar={false}]
    \addplot[forget plot]
        graphics[xmin={-1.0}, xmax={2.0}, ymin={-0.5}, ymax={1.5}] {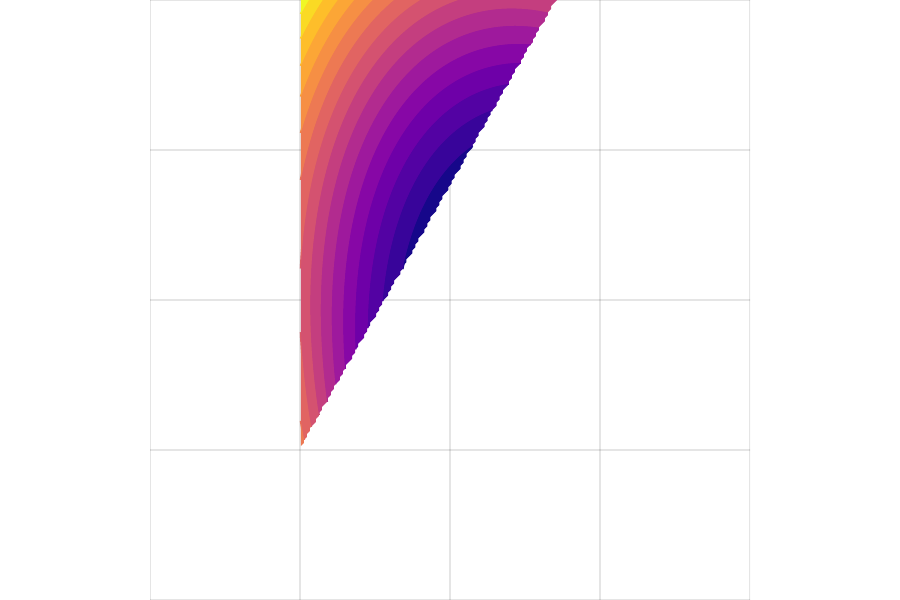}
        ;
    \addplot[color={rgb,1:red,0.6314;green,0.7882;blue,0.9569}, name path={82}, only marks, draw opacity={1.0}, line width={0}, solid, mark={*}, mark size={2.75 pt}, mark repeat={1}, mark options={color={rgb,1:red,0.0;green,0.0;blue,0.0}, draw opacity={0.0}, fill={rgb,1:red,0.6314;green,0.7882;blue,0.9569}, fill opacity={1.0}, line width={0.75}, rotate={0}, solid}]
        table[row sep={\\}]
        {
            \\
            0.4615384615386533  0.8076923076926434  \\
        }
        ;
    \addplot[color={rgb,1:red,1.0;green,0.7059;blue,0.5098}, name path={83}, only marks, draw opacity={1.0}, line width={0}, solid, mark={*}, mark size={2.75 pt}, mark repeat={1}, mark options={color={rgb,1:red,0.0;green,0.0;blue,0.0}, draw opacity={0.0}, fill={rgb,1:red,1.0;green,0.7059;blue,0.5098}, fill opacity={1.0}, line width={0.75}, rotate={0}, solid}]
        table[row sep={\\}]
        {
            \\
            0.46153846153848166  0.8076923076923428  \\
        }
        ;
\end{axis}
\end{tikzpicture}
			\caption{$\reduce{C}$-space}
			\label{fig:Meq0:C}
		\end{subfigure}
		\caption{Squared $\mathcal{H}_2$-error for all $L$ and passivating $\reduce{C}$, respectively, for \Cref{ex:Meq0} with $M=0$. The blue and orange dot correspond to the local minimizers, which are global minimizers due to \Cref{prop:locGlobMin}. 
		}
		\label{fig:Meq0}
	\end{figure}
\end{example}

\subsubsection{Case $M\neq 0$}
Unfortunately, \Cref{prop:locGlobMin} cannot be generalized to the case $M\neq 0$, which can be seen from the following counter example.

\begin{example}
	\label{ex:Mneq0}
	Consider the system from~\Cref{ex:Meq0}, but this time with the feedthrough term $D=\tfrac{1}{8}$. We set $M = \tfrac{1}{2}$ and visualize the squared $\calH_2$-error for all $L$ respectively $\reduce{C}$ in~\Cref{fig:Mneq0}.
	This time we observe, that the presence of the feedthrough term leads to a unique global minimum of the squared $\calH_2$-error in the $L$-parametrization; see~\Cref{fig:Mneq0:L}. However, we also observe the presence of a non-global local minimum at $L^{\star}_{\mathrm{loc}}= [-1, 0]^\T$.

	\begin{figure}[htb]
		\centering
		\ref{leg:toy}
		\\
		\begin{subfigure}{.5\linewidth}
			\centering

\begin{tikzpicture}[/tikz/background rectangle/.style={fill={rgb,1:red,1.0;green,1.0;blue,1.0}, fill opacity={1.0}, draw opacity={1.0}}, show background rectangle]
\begin{axis}[point meta max={nan}, point meta min={nan}, legend cell align={left}, legend columns={3}, title={}, title style={at={{(0.5,1)}}, anchor={south}, font={{\fontsize{14 pt}{18.2 pt}\selectfont}}, color={rgb,1:red,0.0;green,0.0;blue,0.0}, draw opacity={1.0}, rotate={0.0}, align={center}}, 
legend style={/tikz/every even column/.append style={column sep=0.5cm}, color={rgb,1:red,0.0;green,0.0;blue,0.0}, draw opacity={1.0}, line width={1}, solid, fill={rgb,1:red,1.0;green,1.0;blue,1.0}, fill opacity={1.0}, text opacity={1.0}, font={{\fontsize{8 pt}{10.4 pt}\selectfont}}, text={rgb,1:red,0.0;green,0.0;blue,0.0}, cells={anchor={center}}, at={(0.98, 0.98)}, 
anchor={north east}}, axis background/.style={fill={rgb,1:red,1.0;green,1.0;blue,1.0}, opacity={1.0}}, anchor={north west}, xshift={1.0mm}, yshift={-1.0mm}, width={\textwidth}, height={\textwidth}, scaled x ticks={false}, xlabel={$l_1^{\phantom{2}}$}, x tick style={color={rgb,1:red,0.0;green,0.0;blue,0.0}, opacity={1.0}}, x tick label style={color={rgb,1:red,0.0;green,0.0;blue,0.0}, opacity={1.0}, rotate={0}}, xlabel style={at={(ticklabel cs:0.5)}, anchor=near ticklabel, at={{(ticklabel cs:0.5)}}, anchor={near ticklabel}, font={{\fontsize{11 pt}{14.3 pt}\selectfont}}, color={rgb,1:red,0.0;green,0.0;blue,0.0}, draw opacity={1.0}, rotate={0.0}}, xmajorgrids={true}, xmin={-2}, xmax={2}, xticklabels={{$-2$,$-1$,$0$,$1$,$2$}}, xtick={{-2.0,-1.0,0.0,1.0,2.0}}, xtick align={inside}, xticklabel style={font={{\fontsize{8 pt}{10.4 pt}\selectfont}}, color={rgb,1:red,0.0;green,0.0;blue,0.0}, draw opacity={1.0}, rotate={0.0}}, x grid style={color={rgb,1:red,0.0;green,0.0;blue,0.0}, draw opacity={0.1}, line width={0.5}, solid}, axis x line*={left}, x axis line style={color={rgb,1:red,0.0;green,0.0;blue,0.0}, draw opacity={1.0}, line width={1}, solid}, scaled y ticks={false}, ylabel={$l_2^{\phantom{2}}$}, y tick style={color={rgb,1:red,0.0;green,0.0;blue,0.0}, opacity={1.0}}, y tick label style={color={rgb,1:red,0.0;green,0.0;blue,0.0}, opacity={1.0}, rotate={0}}, ylabel style={at={(ticklabel cs:0.5)}, anchor=near ticklabel, at={{(ticklabel cs:0.5)}}, anchor={near ticklabel}, font={{\fontsize{11 pt}{14.3 pt}\selectfont}}, color={rgb,1:red,0.0;green,0.0;blue,0.0}, draw opacity={1.0}, rotate={0.0}}, ymajorgrids={true}, ymin={-2}, ymax={2}, yticklabels={{$-2$,$-1$,$0$,$1$,$2$}}, ytick={{-2.0,-1.0,0.0,1.0,2.0}}, ytick align={inside}, yticklabel style={font={{\fontsize{8 pt}{10.4 pt}\selectfont}}, color={rgb,1:red,0.0;green,0.0;blue,0.0}, draw opacity={1.0}, rotate={0.0}}, y grid style={color={rgb,1:red,0.0;green,0.0;blue,0.0}, draw opacity={0.1}, line width={0.5}, solid}, axis y line*={left}, y axis line style={color={rgb,1:red,0.0;green,0.0;blue,0.0}, draw opacity={1.0}, line width={1}, solid}, colorbar={false}, legend to name={leg:toy}]
    \addplot[forget plot]
        graphics[xmin={-3}, xmax={3}, ymin={-2}, ymax={2}] {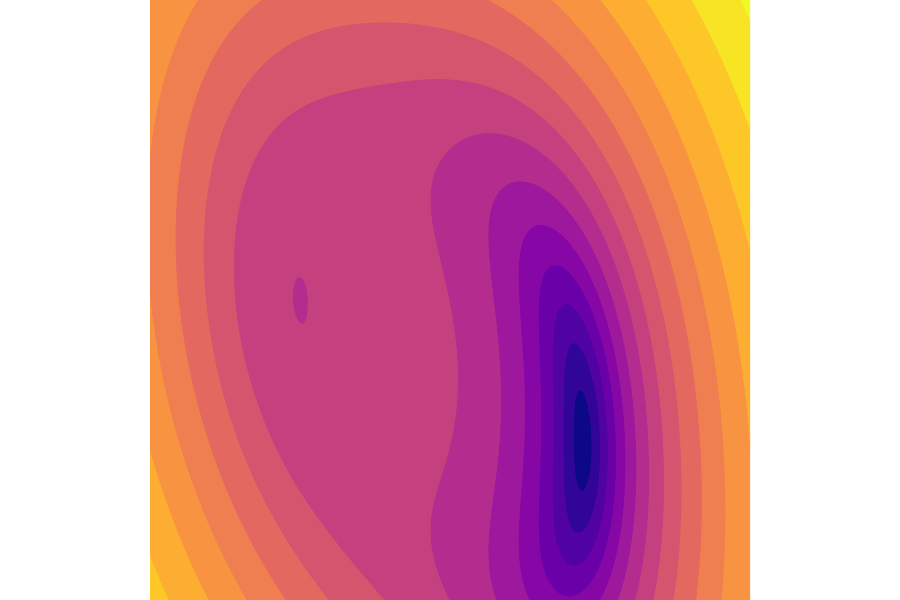}
        ;
    \addplot[color={rgb,1:red,0.6314;green,0.7882;blue,0.9569}, name path={27}, only marks, draw opacity={1.0}, line width={0}, solid, mark={*}, mark size={2.75 pt}, mark repeat={1}, mark options={color={rgb,1:red,0.0;green,0.0;blue,0.0}, draw opacity={0.0}, fill={rgb,1:red,0.6314;green,0.7882;blue,0.9569}, fill opacity={1.0}, line width={0.75}, rotate={0}, solid}]
        table[row sep={\\}]
        {
            \\
            -1.000000000002581  2.529703845881027e-11  \\
        }
        ;
    \addlegendentry {$L_{\mathrm{loc}}^\star/\widehat{C}_{\mathrm{loc}}^\star$}
    \addplot[color={rgb,1:red,1.0;green,0.7059;blue,0.5098}, name path={28}, only marks, draw opacity={1.0}, line width={0}, solid, mark={*}, mark size={2.75 pt}, mark repeat={1}, mark options={color={rgb,1:red,0.0;green,0.0;blue,0.0}, draw opacity={0.0}, fill={rgb,1:red,1.0;green,0.7059;blue,0.5098}, fill opacity={1.0}, line width={0.75}, rotate={0}, solid}]
        table[row sep={\\}]
        {
            \\
            0.885816136226263  -0.9429080681130648  \\
        }
        ;
    \addlegendentry {$L^\star/\widehat{C}^\star$}
\end{axis}
\end{tikzpicture}
			\subcaption{$L$-space}
			\label{fig:Mneq0:L}
		\end{subfigure}\hfill
		\begin{subfigure}{.5\linewidth}
			\centering

\begin{tikzpicture}[/tikz/background rectangle/.style={fill={rgb,1:red,1.0;green,1.0;blue,1.0}, fill opacity={1.0}, draw opacity={1.0}}, show background rectangle]
\begin{axis}[point meta max={nan}, point meta min={nan}, legend cell align={left}, legend columns={1}, title={}, title style={at={{(0.5,1)}}, anchor={south}, font={{\fontsize{14 pt}{18.2 pt}\selectfont}}, color={rgb,1:red,0.0;green,0.0;blue,0.0}, draw opacity={1.0}, rotate={0.0}, align={center}}, legend style={color={rgb,1:red,0.0;green,0.0;blue,0.0}, draw opacity={1.0}, line width={1}, solid, fill={rgb,1:red,1.0;green,1.0;blue,1.0}, fill opacity={1.0}, text opacity={1.0}, font={{\fontsize{8 pt}{10.4 pt}\selectfont}}, text={rgb,1:red,0.0;green,0.0;blue,0.0}, cells={anchor={center}}, at={(1.02, 1)}, anchor={north west}}, axis background/.style={fill={rgb,1:red,1.0;green,1.0;blue,1.0}, opacity={1.0}}, anchor={north west}, xshift={1.0mm}, yshift={-1.0mm}, width={\textwidth}, height={\textwidth}, scaled x ticks={false}, xlabel={$\reduce{c}_1$}, x tick style={color={rgb,1:red,0.0;green,0.0;blue,0.0}, opacity={1.0}}, x tick label style={color={rgb,1:red,0.0;green,0.0;blue,0.0}, opacity={1.0}, rotate={0}}, xlabel style={at={(ticklabel cs:0.5)}, anchor=near ticklabel, at={{(ticklabel cs:0.5)}}, anchor={near ticklabel}, font={{\fontsize{11 pt}{14.3 pt}\selectfont}}, color={rgb,1:red,0.0;green,0.0;blue,0.0}, draw opacity={1.0}, rotate={0.0}}, xmajorgrids={true}, xmin={-0.5}, xmax={1.5}, xticklabels={{$-0.5$,$0.0$,$0.5$,$1.0$,$1.5$}}, xtick={{-0.5,0.0,0.5,1.0,1.5}}, xtick align={inside}, xticklabel style={font={{\fontsize{8 pt}{10.4 pt}\selectfont}}, color={rgb,1:red,0.0;green,0.0;blue,0.0}, draw opacity={1.0}, rotate={0.0}}, x grid style={color={rgb,1:red,0.0;green,0.0;blue,0.0}, draw opacity={0.1}, line width={0.5}, solid}, axis x line*={left}, x axis line style={color={rgb,1:red,0.0;green,0.0;blue,0.0}, draw opacity={1.0}, line width={1}, solid}, scaled y ticks={false}, ylabel={$\reduce{c}_2$}, y tick style={color={rgb,1:red,0.0;green,0.0;blue,0.0}, opacity={1.0}}, y tick label style={color={rgb,1:red,0.0;green,0.0;blue,0.0}, opacity={1.0}, rotate={0}}, ylabel style={at={(ticklabel cs:0.5)}, anchor=near ticklabel, at={{(ticklabel cs:0.5)}}, anchor={near ticklabel}, font={{\fontsize{11 pt}{14.3 pt}\selectfont}}, color={rgb,1:red,0.0;green,0.0;blue,0.0}, draw opacity={1.0}, rotate={0.0}}, ymajorgrids={true}, ymin={-0.5}, ymax={1.5}, yticklabels={{$-0.5$,$0.0$,$0.5$,$1.0$,$1.5$}}, ytick={{-0.5,0.0,0.5,1.0,1.5}}, ytick align={inside}, yticklabel style={font={{\fontsize{8 pt}{10.4 pt}\selectfont}}, color={rgb,1:red,0.0;green,0.0;blue,0.0}, draw opacity={1.0}, rotate={0.0}}, y grid style={color={rgb,1:red,0.0;green,0.0;blue,0.0}, draw opacity={0.1}, line width={0.5}, solid}, axis y line*={left}, y axis line style={color={rgb,1:red,0.0;green,0.0;blue,0.0}, draw opacity={1.0}, line width={1}, solid}, colorbar={false}]
    \addplot[forget plot]
        graphics[xmin={-1.0}, xmax={2.0}, ymin={-0.5}, ymax={1.5}] {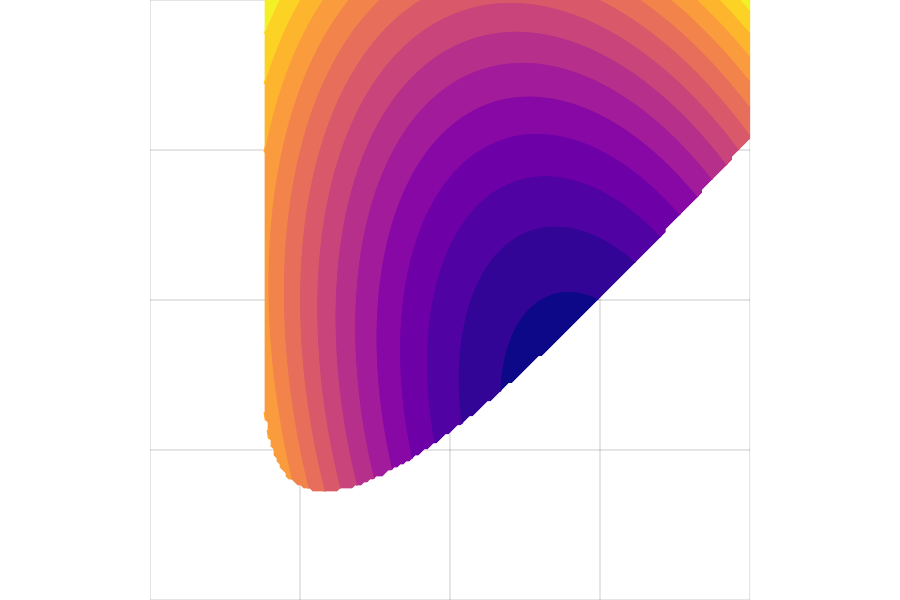}
        ;
    \addplot[color={rgb,1:red,0.6314;green,0.7882;blue,0.9569}, name path={30}, only marks, draw opacity={1.0}, line width={0}, solid, mark={*}, mark size={2.75 pt}, mark repeat={1}, mark options={color={rgb,1:red,0.0;green,0.0;blue,0.0}, draw opacity={0.0}, fill={rgb,1:red,0.6314;green,0.7882;blue,0.9569}, fill opacity={1.0}, line width={0.75}, rotate={0}, solid}]
        table[row sep={\\}]
        {
            \\
            1.290745288429207e-12  1.0000000000051628  \\
        }
        ;
    \addplot[color={rgb,1:red,1.0;green,0.7059;blue,0.5098}, name path={31}, only marks, draw opacity={1.0}, line width={0}, solid, mark={*}, mark size={2.75 pt}, mark repeat={1}, mark options={color={rgb,1:red,0.0;green,0.0;blue,0.0}, draw opacity={0.0}, fill={rgb,1:red,1.0;green,0.7059;blue,0.5098}, fill opacity={1.0}, line width={0.75}, rotate={0}, solid}]
        table[row sep={\\}]
        {
            \\
            0.8352431817125443  0.3401324147424071  \\
        }
        ;
\end{axis}
\end{tikzpicture}
			\subcaption{$\reduce{C}$-space}
			\label{fig:Mneq0:C}
		\end{subfigure}
		\caption{Squared $\mathcal{H}_2$-error for all $L$ respectively passivating $\reduce{C}$ for \Cref{ex:Mneq0} with $M\neq 0$. The blue dot is a non-global local minimizer and the orange dot is the global minimizer.}
		\label{fig:Mneq0}
	\end{figure}
\end{example}
\Cref{ex:Mneq0} illustrates that the presence of the feedthrough term $D$ can lead to (multiple) local minima. 
Thus, once the optimization algorithm has converged, we need to determine whether it corresponds to the global minimum or to some non-global local minimum.
The following theorem provides a simple criterion to check whether a local minimum is a global one. We will prove that if $D+D^\T \succ 0$ and $\reduce{C} \neq 0$ is passivating and if the extremal \ARE solutions satisfy $X_{\min} = X_{\max}$, then $\reduce{C}$ is on the boundary of the feasible set $\feasibleSet$. In particular, if $\reduce{C}$ is locally $\calH_2$-optimal, then it is also globally $\calH_2$-optimal due to the convexity of the optimization problem \eqref{eqn:passivationProblem:H2}.

\begin{theorem}
	\label{thm:locmin}
	Assume $D+D^\T \succ 0$. Let $\systemPas\big(\reduce{C}\big)$ with $\reduce{C}\neq 0$ be passive such that there exists only one solution to the associated \KYP inequality \eqref{eqn:KYP}, i.e., the minimal and maximal solution of the \ARE are identical, i.e., $\Xmin=\Xmax$. Then, $\reduce{C}$ lies on the boundary of the feasible set $\feasibleSet$ defined in~\eqref{eqn:feasibleSet}.
\end{theorem}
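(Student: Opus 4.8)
The plan is to prove that $\reduce{C}$ cannot be an interior point of $\feasibleSet$: since $\reduce{C}\in\feasibleSet$ by hypothesis, it then necessarily lies on $\partial\feasibleSet$. Concretely, I will exhibit infeasible matrices arbitrarily close to $\reduce{C}$ by rescaling, namely $\alpha\reduce{C}$ with $\alpha\downarrow 1$. The engine of the argument is a single frequency $\omega_0\in\R$ at which the modified Popov function of $\systemPas(\reduce{C})$, which I denote $\Phi_{\reduce{C}}(s):=\transferFunctionPas(s;\reduce{C})+\transferFunctionPas(s;\reduce{C})\herm$, is singular, i.e.\ $\Phi_{\reduce{C}}(\i\omega_0)v=0$ for some $v\neq 0$; producing this $\omega_0$ is where the hypothesis $\Xmin=\Xmax$ enters.

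First I would manufacture the singular frequency. Writing $X:=\Xmin=\Xmax$ and $R:=D+D^\T\succ0$, passivity of $\systemPas(\reduce{C})$ together with \Cref{thm:KYPlemma} gives a Lur'e factorization \eqref{eqn:Lure} with factors $L,M$ and $M$ invertible (as $R\succ0$), so the KYP matrix of $\systemPas(\reduce{C})$ factors as $\calW_{\systemPas(\reduce{C})}(X)=\begin{smallbmatrix}L\\M\end{smallbmatrix}\begin{smallbmatrix}L^\T&M^\T\end{smallbmatrix}$. Combining this with the standard identity $\Phi_{\reduce{C}}(\i\omega)=V(\i\omega)\herm\,\calW_{\systemPas(\reduce{C})}(X)\,V(\i\omega)$, valid for any symmetric $X$ with $V(\i\omega):=\begin{smallbmatrix}(\i\omega I-A)^{-1}B\\ I\end{smallbmatrix}$, yields the spectral factorization $\Phi_{\reduce{C}}(\i\omega)=W(\i\omega)\herm W(\i\omega)$ with the square factor $W(s):=M^\T+L^\T(sI-A)^{-1}B$. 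A Schur-complement (matrix-determinant-lemma) computation, using $ML^\T=\reduce{C}-B^\T X$ from \eqref{eqn:Lure} and hence $M^{-\T}L^\T=M^{-\T}M^{-1}(\reduce{C}-B^\T X)=R^{-1}(\reduce{C}-B^\T X)$, gives $\det W(s)=\det(M^\T)\,\det(sI-Y)/\det(sI-A)$ with $Y:=A-BR^{-1}(\reduce{C}-B^\T X)$. Because $\Xmin=\Xmax$ forces $Y=Y_{\min}=Y_{\max}$ to have all its eigenvalues on $\i\R$ (as recalled after \eqref{eqn:kyp:are}) while $A$ Hurwitz gives $\sigma(A)\cap\i\R=\emptyset$, any eigenvalue $\i\omega_0\in\sigma(Y)$ is an uncancelled zero of $\det W$; thus $W(\i\omega_0)$ is singular, and choosing $v\neq0$ with $W(\i\omega_0)v=0$ gives $\Phi_{\reduce{C}}(\i\omega_0)v=0$.

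With $\omega_0$ and $v$ fixed, the remaining steps are short. A direct expansion gives, for every scalar $\alpha$, the affine relation $\Phi_{\alpha\reduce{C}}(\i\omega)=\alpha\Phi_{\reduce{C}}(\i\omega)+(1-\alpha)R$, so at $\omega_0$
\[
v\herm\Phi_{\alpha\reduce{C}}(\i\omega_0)v=\alpha\,v\herm\Phi_{\reduce{C}}(\i\omega_0)v+(1-\alpha)\,v\herm Rv=(1-\alpha)\,v\herm Rv<0
\]
for every $\alpha>1$, since $R\succ0$ and $v\neq0$. Hence $\Phi_{\alpha\reduce{C}}(\i\omega_0)\not\succeq0$, so $\transferFunctionPas(\cdot;\alpha\reduce{C})$ is not positive real and therefore $\systemPas(\alpha\reduce{C})$ is not passive (passivity implies positive realness by \Cref{thm:KYPlemma}); that is, $\alpha\reduce{C}\notin\feasibleSet$ for all $\alpha>1$. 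As $\reduce{C}\neq0$, the family $\{\alpha\reduce{C}\}_{\alpha>1}$ consists of infeasible points converging to $\reduce{C}$ as $\alpha\downarrow1$, so every neighbourhood of $\reduce{C}$ meets the complement of $\feasibleSet$; thus $\reduce{C}\notin\rint\feasibleSet$ and, being feasible, $\reduce{C}\in\partial\feasibleSet$.

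I expect the genuine obstacle to be the first step — certifying a singular frequency of the Popov function from $\Xmin=\Xmax$ — because this is where $D+D^\T\succ0$ (guaranteeing the invertible factor $M$ and the square spectral factor $W$) and the purely imaginary spectrum of $Y$ must be combined; verifying the Popov identity and the determinant formula $\det W(s)=\det(M^\T)\det(sI-Y)/\det(sI-A)$ are the careful-but-routine computations. By contrast the rescaling estimate is essentially one line, and it makes transparent why the hypothesis $\reduce{C}\neq0$ is indispensable: for $\reduce{C}=0$ the rescaling $\alpha\reduce{C}$ is stationary and produces no nearby infeasible points, consistent with $0$ lying in the interior of $\feasibleSet$ when $D+D^\T\succ0$.
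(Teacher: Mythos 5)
Your proof is correct, but it takes a genuinely different route from the paper. The paper argues by contradiction entirely in state space: assuming $\reduce{C}$ were interior, it takes a feasible perturbation pair $\big(\Delta_{\reduce{C}},\Delta_X\big)$ of the unique solution $X=\Xmin=\Xmax$, exploits convexity of the map $(\reduce{C},X)\mapsto\calW(\reduce{C},X)$ together with Schur complements and the fact that $X$ solves the \ARE~\eqref{eqn:kyp:are}, and reduces feasibility along the line $\reduce{C}+\alpha\Delta_{\reduce{C}}$ to a scalar quadratic inequality $0\le-\alpha(\beta+\alpha)$; validity on $[0,1]$ forces $\beta\le-1$, which rules out every $\alpha<0$, so \emph{every} feasible direction is one-sided at $\reduce{C}$. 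You instead work in the frequency domain: you use the rank-minimizing property of $X$ (via the Lur'e factors, which for this specific $X$ is exactly~\eqref{eqn:kyp:are:LM} since $X$ solves the \ARE) to build the square spectral factor $W(s)=M^\T+L^\T(sI-A)^{-1}B$, convert $\Xmin=\Xmax$ into a purely imaginary eigenvalue of $Y$ through the determinant identity — the cancellation-free step using $A$ Hurwitz is handled correctly — and thereby obtain a singular frequency $\omega_0$ of the Popov function; the affine identity $\Phi_{\alpha\reduce{C}}=\alpha\Phi_{\reduce{C}}+(1-\alpha)(D+D^\T)$ then makes the scaled systems $\alpha\reduce{C}$, $\alpha>1$, visibly non-passive. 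Each approach buys something: the paper's argument is purely algebraic (no positive-realness boundary characterization needed) and proves the stronger one-sidedness statement along all feasible directions, while yours produces an explicit infeasibility certificate $(\omega_0,v)$ — an active frequency constraint — that directly mirrors the imaginary-eigenvalue test on $Y^\star$ the paper itself uses in its restart strategy, needs infeasibility only along the single ray $\{\alpha\reduce{C}\}_{\alpha>1}$, and makes the role of the hypothesis $\reduce{C}\neq0$ transparent (it is invisible in the paper's proof). One step you should state explicitly: passing from $\Phi_{\alpha\reduce{C}}(\i\omega_0)\not\succeq0$ to non-passivity requires the imaginary-axis characterization of positive realness, or simply continuity of $\Phi$ up to $\i\R$ (available since $A$ is Hurwitz) combined with \Cref{thm:KYPlemma}; this is routine and covered by the paper's preliminaries, so it is a presentational point rather than a gap.
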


\begin{proof}
	To simplify the notation in the proof, we introduce the matrix function
	\begin{align}
		\label{eqn:kyp:C}
		\calW(C,X) \vcentcolon= \begin{bmatrix}
			-A^\T X - XA & C^\T - XB\\
			C - B^\T X & D+D^\T
		\end{bmatrix}.
	\end{align}
	We assume that $\reduce{C}$ does not lie on the boundary. Then there exist $\Delta\in\R^{\inpVarDim\times\stateDim}\setminus\{0\}$ and $\varepsilon>0$ such that $\systemPas\big(\reduce{C} + \alpha \Delta\big)$ is passive for all $\alpha \in (-\varepsilon, \varepsilon)$. We will use this fact to prove the result via contradiction.
	Let $\Delta_{\reduce{C}} \in \R^{\inpVarDim\times\stateDim}\setminus\{ 0\}$ be such that $\systemPas\big(\reduce{C}+\Delta_{\reduce{C}}\big)$ is passive. 
	
	In particular, for $X \vcentcolon= \Xmin = \Xmax$ there exists symmetric $\Delta_X\in\R^{\stateDim\times\stateDim}$ such that
	\begin{align}
		\label{eqn:KYP:alpha}
		0 &\preceq \calW\big(\reduce{C}+\Delta_{\reduce{C}},X+\Delta_X\big)\\
		&= \begin{bmatrix}
			-A^\T (X + \Delta_X) - (X+\Delta_X) A & \big(\reduce{C}+\Delta_{\reduce{C}}\big)^\T - (X+\Delta_X)B\\
			\big(\reduce{C}+\Delta_{\reduce{C}}\big) - B^\T (X+\Delta_X) & D+D^\T
		\end{bmatrix}.
	\end{align}
	We define $\reduce{C}_\alpha \vcentcolon= \reduce{C} + \alpha \Delta_{\reduce{C}}$ and $X_{\alpha} \vcentcolon= X + \alpha \Delta_X$. 
	Due to convexity of~\eqref{eqn:kyp:C}, we immediately conclude $0 \preceq \calW\big(\reduce{C}_\alpha,X_\alpha\big)$ for all $\alpha\in[0,1]$. 
	We now prove that
	\begin{align*}
		0 \not\preceq \calW\big(\reduce{C}_\alpha,X_\alpha\big)\qquad\text{for any $\alpha<0$}.
	\end{align*}
	Since we do not rely on a specific choice of $\Delta_X$,
	we conclude that $\systemPas\big(\reduce{C}_\alpha\big)$ cannot be passive for $\alpha<0$. 
	We investigate two cases:
	\begin{description}
		\item[Case 1] Assume that $\Delta_{\reduce{C}} - B^\T \Delta_X = 0$. Using the Schur complement and the fact that $X$ solves the Riccati equation \eqref{eqn:kyp:are}, we conclude that $0 \preceq \calW(\reduce{C}_\alpha,X_\alpha)$ is satisfied if and only if $0 \preceq -A^\T X_\alpha - X_\alpha A$ and 
			\begin{align*}
				0 \preceq -\alpha (A^\T \Delta_X + \Delta_X A).
			\end{align*}
			Since this holds for $\alpha\in[0,1]$, we immediately conclude that this inequality cannot hold for $\alpha < 0$.
		\item[Case 2] Assume $\Delta_{\reduce{C}} - B^\T \Delta_X \neq 0$. Then there exists $y\in\R^n$ such that 
			\begin{align*}
				y^\T (\Delta_{\reduce{C}}^\T - \Delta_X B)(D+D^\T)^{-1}(\Delta_{\reduce{C}} - B^\T \Delta_X)y = 1.
			\end{align*}
			Set $\beta \vcentcolon= y^\T\left(A^\T \Delta_X + \Delta_X A + 2\big(\Delta_{\reduce{C}}^\T - \Delta_X B)(D+D^\T)^{-1}(\reduce{C} - B^\T X)\right)y$. Using once again the Schur complement and the fact that $X$ solves the Riccati equation \eqref{eqn:kyp:are}, we estimate
            \begin{align*}
			0 &\le y^\T \left( -A^\T X_\alpha - X_\alpha A - \big( C_\alpha^\T - X_\alpha B \big) \big(D+D^\T\big)^{-1}
			\big(C_\alpha - B^\T X_\alpha \big)\right)y \\
              &= y^\T \left( -\alpha A^\T  \Delta_X - \alpha \Delta_X A - 2\alpha\big(\Delta_{\reduce{C}}^\T - \Delta_X B)(D+D^\T)^{-1}(\reduce{C} - B^\T X) \right. \\ &\qquad\left. - \alpha^2\big( \Delta_{\reduce{C}}^\T - \Delta_X B \big) \big(D+D^\T\big)^{-1}
			\big(\Delta_{\reduce{C}} - B^\T\Delta_X \big)\right)y \\
            &= -\alpha \beta - \alpha^2.
            \end{align*}
            We conclude that
			\begin{align}
				\label{eqn:alphaBetaIneq}
				0 \leq -\alpha(\beta + \alpha)
			\end{align}
			is a necessary condition for $0 \preceq \calW(\reduce{C}_\alpha,X_{\alpha})$. 
			By assumption, $0 \preceq \calW(\reduce{C}_\alpha,X_{\alpha})$ for all $\alpha \in[0,1]$, and therefore~\eqref{eqn:alphaBetaIneq} must hold. In particular, evaluating~\eqref{eqn:alphaBetaIneq} at $\alpha = 1$, we obtain $0 \leq -\beta - 1$, which implies $\beta \leq -1$.
			Hence, we conclude that~\eqref{eqn:alphaBetaIneq} cannot hold for $\alpha < 0$.\qedhere
	\end{description}
\end{proof}

\begin{remark} 
    The previous result provides a sufficient condition for a local minimizer being a global one, namely if the extremal solutions of the \ARE \eqref{eqn:kyp:are} coincide. Unfortunately, this characterization is not true if $D+D^\T \not\succ 0$.
    For instance, in the scalar case, i.e., $\stateDim=\inpVarDim=1$ and $D=0$, the Lur'e equations~\eqref{eqn:Lure} reduce to 
	\begin{equation*}
		\label{eqn:Luren1}
		2AX = -L^2, \qquad BX -C = 0. 
	\end{equation*}
	If we assume $A<0$ and $B\neq0$, then~\eqref{eqn:Lure} is only solvable if $\sign(B) = \sign(C)$. Hence, the feasible set equals $\feasibleSet =\bigl\{\reduce{C} \in \R \;\big|\; \reduce{C}/B \geq 0\bigr\}$. For all $\reduce{C}\in\feasibleSet$, including $\reduce{C}\neq 0$, the unique solution is $X = -{\reduce{C}}/{B}$.
	Consequently, in the view of the passivation problem, $\Xmin = \Xmax$ does not imply that we are on the boundary of the feasible set and, therefore, does not imply that we have found the nearest passive system.
\end{remark}
\Cref{thm:locmin} provides a simple way to detect global minimizers. If our optimization algorithm converges to a minimizer $L^\star$, we can check whether the condition of \Cref{thm:locmin} is satisfied. 
In particular, we can compute the eigenvalues of the matrix 
\begin{align*}
	Y^\star \vcentcolon=&\, A - B\big(D+D^\T\big)^{-1}\left( \reduce{C}(L^\star) - B^\T \calL^{-1}\left(-L^\star \big(L^{\star}\big)^\T\right)\right) \\
	=&\, A - B(D+D^\T)^{-1} M\big(L^\star\big)^\T.
\end{align*}
If, up to a prescribed tolerance, the eigenvalues are not on the imaginary axis, then we conclude $\Xmin \neq \Xmax$ and we are potentially stuck in a non-global local minimum. In this case, we suggest performing a small gradient step on $\reduce{C}^{\star} \vcentcolon= \reduce{C}\big(L^\star\big)$ in the direction of $\nabla_{\reduce{C}} \calJ(L^\star)$, i.e., 
\begin{equation}
	\label{eqn:gradientDescent}
	\reduce{C}^{\star} \leftarrow \reduce{C}^{\star} - \alpha \nabla_{\reduce{C}} \calJ(L^\star),
\end{equation}
with $\nabla_{\reduce{C}} \calJ(L^\star) = 2(\reduce{C}^{\star}-C)\calP$ and $\alpha \ll 1$. We then check whether the resulting $\reduce{C}^{\star}$ still yields a passive realization. If this is the case we simply restart the optimization from this point on, by obtaining the new initial $L_0$ via~\eqref{eqn:kyp:are:LM}. If not, we apply our initialization strategy, which is described in the forthcoming~\Cref{sec:passivation:init} or try a smaller step size $\alpha$.

\begin{example}
	\label{ex:Mneq0:restart}
	Let us showcase the local minimum detection plus restart strategy in the following example. Consider again the system from~\Cref{ex:Mneq0} with feedthrough term $D = \tfrac{1}{8}$. We initialize the optimization at $L_0 = [-2,0]^\T$, which leads to convergence to a non-global local minimum. The result is $L^{\star}_{\mathrm{loc}} = [-1, 0]^\T$ with $\reduce{C}^{\star}_{\mathrm{loc}} = [0, 1]$, yielding
	\begin{equation*}
		Y^{\star}_{\mathrm{loc}} = \begin{bmatrix}
			1 & 4 \\
			2 & -1 
		\end{bmatrix}
		\quad \text{with eigenvalues } \lambda_{1,2} = \pm 3.
	\end{equation*}
	Thus, we conclude $X_{\min} \neq X_{\max}$, indicating that the algorithm is potentially stuck in a non-global local minimum, as demonstrated in this example.
	Applying our restart strategy ensures convergence to the global minimum. The results are visualized in~\Cref{fig:Mneq0:restart}. 
	Indeed, we obtain $L^{\star} \approx [0.89, -0.94]^\T$ with $\reduce{C}^{\star} \approx [0.84, 0.34]$ and 
	\begin{equation*}
		Y^{\star} \approx \begin{bmatrix}
			-2.77 & 5.89 \\
			-5.54 & 2.77
		\end{bmatrix}
		\quad \text{with eigenvalues } \lambda_{1,2} \approx \pm 5\imath.
	\end{equation*}
\end{example}

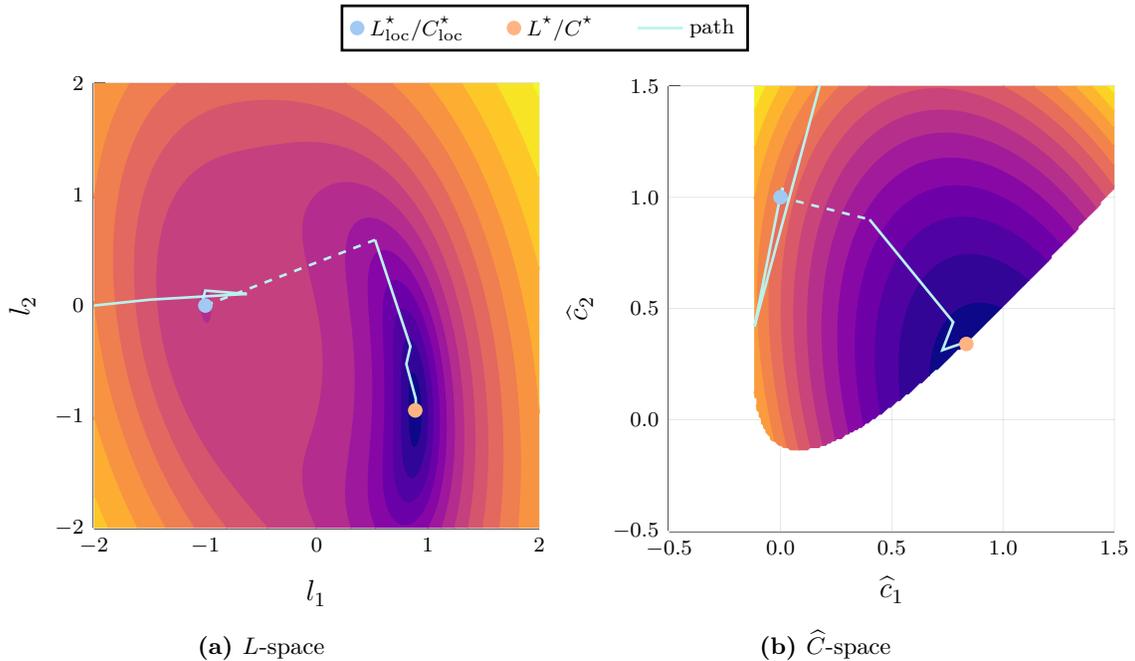
\begin{figure}[htb]
	\centering
	\ref{leg:toy:restart}\\
	\begin{subfigure}{.5\linewidth}
		\centering

\begin{tikzpicture}[/tikz/background rectangle/.style={fill={rgb,1:red,1.0;green,1.0;blue,1.0}, fill opacity={1.0}, draw opacity={1.0}}, show background rectangle]
\begin{axis}[point meta max={nan}, point meta min={nan}, legend cell align={left}, legend columns={3}, title={}, title style={at={{(0.5,1)}}, anchor={south}, font={{\fontsize{14 pt}{18.2 pt}\selectfont}}, color={rgb,1:red,0.0;green,0.0;blue,0.0}, draw opacity={1.0}, rotate={0.0}, align={center}}, 
legend style={/tikz/every even column/.append style={column sep=0.5cm}, color={rgb,1:red,0.0;green,0.0;blue,0.0}, draw opacity={1.0}, line width={1}, solid, fill={rgb,1:red,1.0;green,1.0;blue,1.0}, fill opacity={1.0}, text opacity={1.0}, font={{\fontsize{8 pt}{10.4 pt}\selectfont}}, text={rgb,1:red,0.0;green,0.0;blue,0.0}, cells={anchor={center}}, at={(0.98, 0.98)}, anchor={north east}}, axis background/.style={fill={rgb,1:red,1.0;green,1.0;blue,1.0}, opacity={1.0}}, anchor={north west}, xshift={1.0mm}, yshift={-1.0mm}, width={\textwidth}, height={\textwidth}, scaled x ticks={false}, xlabel={$l_1^{\phantom{2}}$}, x tick style={color={rgb,1:red,0.0;green,0.0;blue,0.0}, opacity={1.0}}, x tick label style={color={rgb,1:red,0.0;green,0.0;blue,0.0}, opacity={1.0}, rotate={0}}, xlabel style={at={(ticklabel cs:0.5)}, anchor=near ticklabel, at={{(ticklabel cs:0.5)}}, anchor={near ticklabel}, font={{\fontsize{11 pt}{14.3 pt}\selectfont}}, color={rgb,1:red,0.0;green,0.0;blue,0.0}, draw opacity={1.0}, rotate={0.0}}, xmajorgrids={true}, xmin={-2}, xmax={2}, xticklabels={{$-2$,$-1$,$0$,$1$,$2$}}, xtick={{-2.0,-1.0,0.0,1.0,2.0}}, xtick align={inside}, xticklabel style={font={{\fontsize{8 pt}{10.4 pt}\selectfont}}, color={rgb,1:red,0.0;green,0.0;blue,0.0}, draw opacity={1.0}, rotate={0.0}}, x grid style={color={rgb,1:red,0.0;green,0.0;blue,0.0}, draw opacity={0.1}, line width={0.5}, solid}, axis x line*={left}, x axis line style={color={rgb,1:red,0.0;green,0.0;blue,0.0}, draw opacity={1.0}, line width={1}, solid}, scaled y ticks={false}, ylabel={$l_2^{\phantom{2}}$}, y tick style={color={rgb,1:red,0.0;green,0.0;blue,0.0}, opacity={1.0}}, y tick label style={color={rgb,1:red,0.0;green,0.0;blue,0.0}, opacity={1.0}, rotate={0}}, ylabel style={at={(ticklabel cs:0.5)}, anchor=near ticklabel, at={{(ticklabel cs:0.5)}}, anchor={near ticklabel}, font={{\fontsize{11 pt}{14.3 pt}\selectfont}}, color={rgb,1:red,0.0;green,0.0;blue,0.0}, draw opacity={1.0}, rotate={0.0}}, ymajorgrids={true}, ymin={-2}, ymax={2}, yticklabels={{$-2$,$-1$,$0$,$1$,$2$}}, ytick={{-2.0,-1.0,0.0,1.0,2.0}}, ytick align={inside}, yticklabel style={font={{\fontsize{8 pt}{10.4 pt}\selectfont}}, color={rgb,1:red,0.0;green,0.0;blue,0.0}, draw opacity={1.0}, rotate={0.0}}, y grid style={color={rgb,1:red,0.0;green,0.0;blue,0.0}, draw opacity={0.1}, line width={0.5}, solid}, axis y line*={left}, y axis line style={color={rgb,1:red,0.0;green,0.0;blue,0.0}, draw opacity={1.0}, line width={1}, solid}, colorbar={false}, legend to name={leg:toy:restart}]
    \addplot[forget plot]
        graphics[xmin={-3}, xmax={3}, ymin={-2}, ymax={2}] {plots/lspace_bg_D=0.125.pdf}
        ;
    \addplot[color={rgb,1:red,0.6314;green,0.7882;blue,0.9569}, name path={42}, only marks, draw opacity={1.0}, line width={0}, solid, mark={*}, mark size={2.75 pt}, mark repeat={1}, mark options={color={rgb,1:red,0.0;green,0.0;blue,0.0}, draw opacity={0.0}, fill={rgb,1:red,0.6314;green,0.7882;blue,0.9569}, fill opacity={1.0}, line width={0.75}, rotate={0}, solid}]
        table[row sep={\\}]
        {
            \\
            -1.000000000002581  2.529703845881027e-11  \\
        }
        ;
    \addlegendentry {$L_{\mathrm{loc}}^\star/C_{\mathrm{loc}}^\star$}
    \addplot[color={rgb,1:red,1.0;green,0.7059;blue,0.5098}, name path={43}, only marks, draw opacity={1.0}, line width={0}, solid, mark={*}, mark size={2.75 pt}, mark repeat={1}, mark options={color={rgb,1:red,0.0;green,0.0;blue,0.0}, draw opacity={0.0}, fill={rgb,1:red,1.0;green,0.7059;blue,0.5098}, fill opacity={1.0}, line width={0.75}, rotate={0}, solid}]
        table[row sep={\\}]
        {
            \\
            0.885816136226263  -0.9429080681130648  \\
        }
        ;
    \addlegendentry {$L^\star/C^\star$}
    \addplot[color={rgb,1:red,0.7255;green,0.949;blue,0.9412}, name path={44}, draw opacity={1.0}, line width={1}, solid]
        table[row sep={\\}]
        {
            \\
            -2.0  0.0  \\
            -1.5011880876035293  0.05250651709436533  \\
            -0.6277494071643429  0.10436235787350281  \\
            -1.0012808468015608  0.13445722708457053  \\
            -1.0205595379046832  0.06186054008120073  \\
            -0.9968775373447396  0.0012091795273136002  \\
            -0.9999280119255713  -0.000618065698069401  \\
            -0.9999996855027661  1.4732422923919783e-7  \\
            -1.000000000002581  2.529703845881027e-11  \\
        }
        ;
    \addlegendentry {path}
    \addplot[color={rgb,1:red,0.7255;green,0.949;blue,0.9412}, name path={45}, draw opacity={1.0}, line width={1}, dashed, forget plot]
        table[row sep={\\}]
        {
            \\
            -1.000000000002581  2.529703845881027e-11  \\
            0.5246950765960862  0.5906358031916974  \\
        }
        ;
    \addplot[color={rgb,1:red,0.7255;green,0.949;blue,0.9412}, name path={46}, draw opacity={1.0}, line width={1}, solid, forget plot]
        table[row sep={\\}]
        {
            \\
            0.5246950765960862  0.5906358031916974  \\
            0.8424224805748928  -0.3678299747961704  \\
            0.8056796103928872  -0.5261296098161088  \\
            0.8902435309369684  -0.838524201972416  \\
            0.8928275651450612  -0.9360721282827451  \\
            0.8857150426730148  -0.9397159192854204  \\
            0.8858213865782921  -0.9429043080377618  \\
            0.8858161343678571  -0.9429080164271081  \\
            0.885816136226263  -0.9429080681130648  \\
        }
        ;
\end{axis}
\end{tikzpicture}
		\subcaption{$L$-space}
		\label{fig:Mneq0:restart:L}
	\end{subfigure}\hfill
	\begin{subfigure}{.5\linewidth}
		\centering

\begin{tikzpicture}[/tikz/background rectangle/.style={fill={rgb,1:red,1.0;green,1.0;blue,1.0}, fill opacity={1.0}, draw opacity={1.0}}, show background rectangle]
\begin{axis}[point meta max={nan}, point meta min={nan}, legend cell align={left}, legend columns={1}, title={}, title style={at={{(0.5,1)}}, anchor={south}, font={{\fontsize{14 pt}{18.2 pt}\selectfont}}, color={rgb,1:red,0.0;green,0.0;blue,0.0}, draw opacity={1.0}, rotate={0.0}, align={center}}, legend style={color={rgb,1:red,0.0;green,0.0;blue,0.0}, draw opacity={1.0}, line width={1}, solid, fill={rgb,1:red,1.0;green,1.0;blue,1.0}, fill opacity={1.0}, text opacity={1.0}, font={{\fontsize{8 pt}{10.4 pt}\selectfont}}, text={rgb,1:red,0.0;green,0.0;blue,0.0}, cells={anchor={center}}, at={(1.02, 1)}, anchor={north west}}, axis background/.style={fill={rgb,1:red,1.0;green,1.0;blue,1.0}, opacity={1.0}}, anchor={north west}, xshift={1.0mm}, yshift={-1.0mm}, width={\textwidth}, height={\textwidth}, scaled x ticks={false}, xlabel={$\reduce{c}_1$}, x tick style={color={rgb,1:red,0.0;green,0.0;blue,0.0}, opacity={1.0}}, x tick label style={color={rgb,1:red,0.0;green,0.0;blue,0.0}, opacity={1.0}, rotate={0}}, xlabel style={at={(ticklabel cs:0.5)}, anchor=near ticklabel, at={{(ticklabel cs:0.5)}}, anchor={near ticklabel}, font={{\fontsize{11 pt}{14.3 pt}\selectfont}}, color={rgb,1:red,0.0;green,0.0;blue,0.0}, draw opacity={1.0}, rotate={0.0}}, xmajorgrids={true}, xmin={-0.5}, xmax={1.5}, xticklabels={{$-0.5$,$0.0$,$0.5$,$1.0$,$1.5$}}, xtick={{-0.5,0.0,0.5,1.0,1.5}}, xtick align={inside}, xticklabel style={font={{\fontsize{8 pt}{10.4 pt}\selectfont}}, color={rgb,1:red,0.0;green,0.0;blue,0.0}, draw opacity={1.0}, rotate={0.0}}, x grid style={color={rgb,1:red,0.0;green,0.0;blue,0.0}, draw opacity={0.1}, line width={0.5}, solid}, axis x line*={left}, x axis line style={color={rgb,1:red,0.0;green,0.0;blue,0.0}, draw opacity={1.0}, line width={1}, solid}, scaled y ticks={false}, ylabel={$\reduce{c}_2$}, y tick style={color={rgb,1:red,0.0;green,0.0;blue,0.0}, opacity={1.0}}, y tick label style={color={rgb,1:red,0.0;green,0.0;blue,0.0}, opacity={1.0}, rotate={0}}, ylabel style={at={(ticklabel cs:0.5)}, anchor=near ticklabel, at={{(ticklabel cs:0.5)}}, anchor={near ticklabel}, font={{\fontsize{11 pt}{14.3 pt}\selectfont}}, color={rgb,1:red,0.0;green,0.0;blue,0.0}, draw opacity={1.0}, rotate={0.0}}, ymajorgrids={true}, ymin={-0.5}, ymax={1.5}, yticklabels={{$-0.5$,$0.0$,$0.5$,$1.0$,$1.5$}}, ytick={{-0.5,0.0,0.5,1.0,1.5}}, ytick align={inside}, yticklabel style={font={{\fontsize{8 pt}{10.4 pt}\selectfont}}, color={rgb,1:red,0.0;green,0.0;blue,0.0}, draw opacity={1.0}, rotate={0.0}}, y grid style={color={rgb,1:red,0.0;green,0.0;blue,0.0}, draw opacity={0.1}, line width={0.5}, solid}, axis y line*={left}, y axis line style={color={rgb,1:red,0.0;green,0.0;blue,0.0}, draw opacity={1.0}, line width={1}, solid}, colorbar={false}]
    \addplot[forget plot]
        graphics[xmin={-1.0}, xmax={2.0}, ymin={-0.5}, ymax={1.5}] {plots/cspace_bg_D=0.125.pdf}
        ;
    \addplot[color={rgb,1:red,0.6314;green,0.7882;blue,0.9569}, name path={57}, only marks, draw opacity={1.0}, line width={0}, solid, mark={*}, mark size={2.75 pt}, mark repeat={1}, mark options={color={rgb,1:red,0.0;green,0.0;blue,0.0}, draw opacity={0.0}, fill={rgb,1:red,0.6314;green,0.7882;blue,0.9569}, fill opacity={1.0}, line width={0.75}, rotate={0}, solid}]
        table[row sep={\\}]
        {
            \\
            1.290745288429207e-12  1.0000000000051628  \\
        }
        ;
    \addplot[color={rgb,1:red,1.0;green,0.7059;blue,0.5098}, name path={58}, only marks, draw opacity={1.0}, line width={0}, solid, mark={*}, mark size={2.75 pt}, mark repeat={1}, mark options={color={rgb,1:red,0.0;green,0.0;blue,0.0}, draw opacity={0.0}, fill={rgb,1:red,1.0;green,0.7059;blue,0.5098}, fill opacity={1.0}, line width={0.75}, rotate={0}, solid}]
        table[row sep={\\}]
        {
            \\
            0.8352431817125443  0.3401324147424071  \\
        }
        ;
    \addplot[color={rgb,1:red,0.7255;green,0.949;blue,0.9412}, name path={59}, draw opacity={1.0}, line width={1}, solid]
        table[row sep={\\}]
        {
            \\
            1.0000000000000009  4.000000000000003  \\
            0.37618879337960665  2.2417863210868094  \\
            -0.11684004448457935  0.41893954385971194  \\
            0.0006412436850451453  1.0115165975746583  \\
            0.010491116251769128  1.0428192215584886  \\
            -0.0015563564411131825  0.9937674433290365  \\
            -3.599144607258076e-5  0.9998561977893499  \\
            -1.5724856733401182e-7  0.9999993710056654  \\
            1.290745288429207e-12  1.0000000000051628  \\
        }
        ;
    \addplot[color={rgb,1:red,0.7255;green,0.949;blue,0.9412}, name path={60}, draw opacity={1.0}, line width={1}, dashed]
        table[row sep={\\}]
        {
            \\
            1.290745288429207e-12  1.0000000000051628  \\
            0.40000000000012936  0.9000000000040012  \\
        }
        ;
    \addplot[color={rgb,1:red,0.7255;green,0.949;blue,0.9412}, name path={61}, draw opacity={1.0}, line width={1}, solid]
        table[row sep={\\}]
        {
            \\
            0.4000000000001295  0.9000000000040019  \\
            0.7760490581764244  0.4384759736603514  \\
            0.727399622497861  0.31251506333100365  \\
            0.8413885376560447  0.3515874888656384  \\
            0.84498431311396  0.349345012067961  \\
            0.8351030897451378  0.3400059188785736  \\
            0.8352504577488902  0.3401392411803649  \\
            0.8352431791371355  0.34013241232614927  \\
            0.8352431817125443  0.3401324147424071  \\
        }
        ;
\end{axis}
\end{tikzpicture}
		\subcaption{$\reduce{C}$-space}
		\label{fig:Mneq0:restart:C}
	\end{subfigure}
	\caption{Visualization of the $\Htwo$-error during the optimization for \Cref{ex:Mneq0} with $M\neq 0$. The path line corresponds to the path taken by the optimizer, the solid parts represent the standard L-BFGS optimization and the dashed part represents the gradient step performed after a local minimum is detected.}
	\label{fig:Mneq0:restart}
\end{figure}

\subsection{Initialization via solutions of the perturbed ARE}
\label{sec:passivation:init}
The \ARE~\eqref{eqn:kyp:are} associated with the \KYP inequality or the Lur'e equations does not have a solution for a non-passive system.
If we have a solution of the \ARE at hand, we can use it as an initialization for the optimization problem via~\eqref{eqn:kyp:are:LM}.
Now the idea is the following: Instead of perturbing $C$ to find a passive realization, we perturb the feedthrough term $D$. Then  we solve the \ARE for the perturbed system and use the solution as an initialization for the optimization problem.
The perturbation can be chosen as $\Delta_D = -\frac{\lambda_{\min}}{2} I_m$, with 
$\lambda_{\min}<0$ being the smallest eigenvalue of the modified Popov function along the imaginary axis $\Phi(\imath \omega)$ for all $\omega \in \R$.
Since we may not obtain the exact value of $\lambda_{\min}$, we can set $\Delta_D = -\left(\frac{\lambda_{\min}}{2} - \varepsilon\right) I_m$ for some small $\varepsilon>0$ to ensure that the perturbed system is passive.
In our algorithms we use a sampling technique over a sufficiently large frequency domain. The pseudocode for this initialization strategy is given in~\Cref{alg:init}. 

\begin{algorithm}[htb]
	\caption{Initialization}
	\label{alg:init}
	\KwIn{non-passive \LTI system~\eqref{eqn:LTI}, sampling frequencies $\Omega = \left\{\omega_1,\ldots,\omega_q \right\}$, tolerance $\varepsilon>0$.}
	\KwOut{Initial $L_0$ for the optimization.}
    Set $\lambda_{\min} = \min_{j \in \left\{1,\ldots,q\right\}} \lambda_{\min}\left(\Phi(\imath \omega_j)\right)$ with the modified Popov function $\Phi$ as in~\eqref{eqn:popovFunction}. \\
	Set $D_{\mathrm{pert}} = D - \left(\frac{\lambda_{\mathrm{min}}}{2} - \varepsilon\right) I_m$.\\
	Solve $A^\T X +XA + \big(C^\T - XB\big){\big(D_{\mathrm{pert}}+D_{\mathrm{pert}}^\T\big)}^{-1}\big(C-B^\T X\big) = 0$ for minimal solution $X_{\mathrm{min}}$.\\
	Set $M = \big(D+D^\T\big)^{1/2}$.\\
	Set $L_0 = \big(C^\T- X_{\mathrm{min}}B\big) M^{-1}$.\\
\Return{$L_0$.}
\end{algorithm}

An alternative would be an iteration over a typically small number of Hamiltonian eigenvalue problems. Such techniques are widely used for computing the $\calH_\infty$-norm of a transfer function \cite{BoyBK89, BruS90} and it is not difficult to adapt them to our setting.

\begin{example}
	\label{ex:init}
	Consider again the system from~\Cref{ex:Mneq0}.
	We first estimate the smallest eigenvalue of the modified Popov function $\Phi$ through sampling and set $\Delta_D =~-\frac{\lambda_{\mathrm{min}}}{2} I_m$ accordingly (see~\Cref{fig:init:popov}).
	With this adjustment, we solve the \ARE~\eqref{eqn:kyp:are} for the system incorporating the perturbed feedthrough term.
	The resulting $\Xmin$ serves as the initialization for the optimization problem formulated in~\eqref{eqn:kyp:are:LM}.
	In~\Cref{fig:init}, we present the squared $\calH_2$-error as a contour surface, similar to the earlier visualizations.
	In~\Cref{fig:init:Cspace}, we also display the $\mathcal{H}_2$-error contour plot for the perturbed system with reduced opacity.
	As anticipated, this contour plot is larger than that of the original system, indicating that the original $C$ already provides a passive realization.
	We also show the resulting initial $L_0$ and $\reduce{C}_0$, respectively and compare the latter with the original $C$ in~\Cref{fig:init:Cspace} and~\Cref{fig:init:Lspace}.
\end{example}

\begin{figure}[htb]
	\input{plots/popov_D=0.125.tikz}
	\caption{Sampled modified Popov function $\Phi(\imath \omega)$ for the system in~\Cref{ex:Mneq0} and the perturbed system.}
	\label{fig:init:popov}
\end{figure} 

\begin{figure}[htb]
	\centering
	\ref{leg:toy:init}\\
	\begin{subfigure}{.5\linewidth}
		\centering

\begin{tikzpicture}[/tikz/background rectangle/.style={fill={rgb,1:red,1.0;green,1.0;blue,1.0}, fill opacity={1.0}, draw opacity={1.0}}, show background rectangle]
\begin{axis}[point meta max={nan}, point meta min={nan}, legend cell align={left}, legend columns={1}, title={}, title style={at={{(0.5,1)}}, anchor={south}, font={{\fontsize{14 pt}{18.2 pt}\selectfont}}, color={rgb,1:red,0.0;green,0.0;blue,0.0}, draw opacity={1.0}, rotate={0.0}, align={center}}, legend style={color={rgb,1:red,0.0;green,0.0;blue,0.0}, draw opacity={1.0}, line width={1}, solid, fill={rgb,1:red,1.0;green,1.0;blue,1.0}, fill opacity={1.0}, text opacity={1.0}, font={{\fontsize{8 pt}{10.4 pt}\selectfont}}, text={rgb,1:red,0.0;green,0.0;blue,0.0}, cells={anchor={center}}, at={(1.02, 1)}, anchor={north west}}, axis background/.style={fill={rgb,1:red,1.0;green,1.0;blue,1.0}, opacity={1.0}}, anchor={north west}, xshift={1.0mm}, yshift={-1.0mm}, width={\textwidth}, height={\textwidth}, scaled x ticks={false}, xlabel={$l_1^{\phantom{2}}$}, x tick style={color={rgb,1:red,0.0;green,0.0;blue,0.0}, opacity={1.0}}, x tick label style={color={rgb,1:red,0.0;green,0.0;blue,0.0}, opacity={1.0}, rotate={0}}, xlabel style={at={(ticklabel cs:0.5)}, anchor=near ticklabel, at={{(ticklabel cs:0.5)}}, anchor={near ticklabel}, font={{\fontsize{11 pt}{14.3 pt}\selectfont}}, color={rgb,1:red,0.0;green,0.0;blue,0.0}, draw opacity={1.0}, rotate={0.0}}, xmajorgrids={true}, xmin={-2}, xmax={2}, xticklabels={{$-2$,$-1$,$0$,$1$,$2$}}, xtick={{-2.0,-1.0,0.0,1.0,2.0}}, xtick align={inside}, xticklabel style={font={{\fontsize{8 pt}{10.4 pt}\selectfont}}, color={rgb,1:red,0.0;green,0.0;blue,0.0}, draw opacity={1.0}, rotate={0.0}}, x grid style={color={rgb,1:red,0.0;green,0.0;blue,0.0}, draw opacity={0.1}, line width={0.5}, solid}, axis x line*={left}, x axis line style={color={rgb,1:red,0.0;green,0.0;blue,0.0}, draw opacity={1.0}, line width={1}, solid}, scaled y ticks={false}, ylabel={$l_2^{\phantom{2}}$}, y tick style={color={rgb,1:red,0.0;green,0.0;blue,0.0}, opacity={1.0}}, y tick label style={color={rgb,1:red,0.0;green,0.0;blue,0.0}, opacity={1.0}, rotate={0}}, ylabel style={at={(ticklabel cs:0.5)}, anchor=near ticklabel, at={{(ticklabel cs:0.5)}}, anchor={near ticklabel}, font={{\fontsize{11 pt}{14.3 pt}\selectfont}}, color={rgb,1:red,0.0;green,0.0;blue,0.0}, draw opacity={1.0}, rotate={0.0}}, ymajorgrids={true}, ymin={-2}, ymax={2}, yticklabels={{$-2$,$-1$,$0$,$1$,$2$}}, ytick={{-2.0,-1.0,0.0,1.0,2.0}}, ytick align={inside}, yticklabel style={font={{\fontsize{8 pt}{10.4 pt}\selectfont}}, color={rgb,1:red,0.0;green,0.0;blue,0.0}, draw opacity={1.0}, rotate={0.0}}, y grid style={color={rgb,1:red,0.0;green,0.0;blue,0.0}, draw opacity={0.1}, line width={0.5}, solid}, axis y line*={left}, y axis line style={color={rgb,1:red,0.0;green,0.0;blue,0.0}, draw opacity={1.0}, line width={1}, solid}, colorbar={false}]
    \addplot[forget plot]
        graphics[xmin={-3}, xmax={3}, ymin={-2}, ymax={2}] {plots/lspace_bg_D=0.125.pdf}
        ;
    \addplot[color={rgb,1:red,0.6314;green,0.7882;blue,0.9569}, name path={75}, only marks, draw opacity={1.0}, line width={0}, solid, mark={*}, mark size={2.75 pt}, mark repeat={1}, mark options={color={rgb,1:red,0.0;green,0.0;blue,0.0}, draw opacity={0.0}, fill={rgb,1:red,0.6314;green,0.7882;blue,0.9569}, fill opacity={1.0}, line width={0.75}, rotate={0}, solid}]
        table[row sep={\\}]
        {
            \\
            0.8409358077487007  -1.1892012038491426  \\
        }
        ;
    \addplot[color={rgb,1:red,1.0;green,0.7059;blue,0.5098}, name path={76}, only marks, draw opacity={1.0}, line width={0}, solid, mark={*}, mark size={2.75 pt}, mark repeat={1}, mark options={color={rgb,1:red,0.0;green,0.0;blue,0.0}, draw opacity={0.0}, fill={rgb,1:red,1.0;green,0.7059;blue,0.5098}, fill opacity={1.0}, line width={0.75}, rotate={0}, solid}]
        table[row sep={\\}]
        {
            \\
            0.885816136226263  -0.9429080681130648  \\
        }
        ;
    \addplot[color={rgb,1:red,0.7255;green,0.949;blue,0.9412}, name path={77}, draw opacity={1.0}, line width={1}, solid]
        table[row sep={\\}]
        {
            \\
            0.8409358077487007  -1.1892012038491426  \\
            0.8611070665254024  -1.185682713962255  \\
            0.8876598234324653  -1.1709511090987426  \\
            0.8956962181991118  -0.9599698358421274  \\
            0.8857124625287769  -0.9514614603989673  \\
            0.8858305846972045  -0.9429271524282457  \\
            0.8858161269436671  -0.9429084484387512  \\
            0.8858161362270195  -0.9429080681141557  \\
        }
        ;
\end{axis}
\end{tikzpicture}
		\caption{$L$-space}
		\label{fig:init:Cspace}
	\end{subfigure}\hfill
	\begin{subfigure}{.5\linewidth}
		\centering

\begin{tikzpicture}[/tikz/background rectangle/.style={fill={rgb,1:red,1.0;green,1.0;blue,1.0}, fill opacity={1.0}, draw opacity={1.0}}, show background rectangle]
\begin{axis}[point meta max={nan}, point meta min={nan}, legend cell align={left}, legend columns={4}, title={}, title style={at={{(0.5,1)}}, anchor={south}, font={{\fontsize{14 pt}{18.2 pt}\selectfont}}, color={rgb,1:red,0.0;green,0.0;blue,0.0}, draw opacity={1.0}, rotate={0.0}, align={center}}, 
legend style={/tikz/every even column/.append style={column sep=0.5cm}, color={rgb,1:red,0.0;green,0.0;blue,0.0}, draw opacity={1.0}, line width={1}, solid, fill={rgb,1:red,1.0;green,1.0;blue,1.0}, fill opacity={1.0}, text opacity={1.0}, font={{\fontsize{8 pt}{10.4 pt}\selectfont}}, text={rgb,1:red,0.0;green,0.0;blue,0.0}, cells={anchor={center}}, at={(0.98, 0.98)}, anchor={north east}}, axis background/.style={fill={rgb,1:red,1.0;green,1.0;blue,1.0}, opacity={1.0}}, anchor={north west}, xshift={1.0mm}, yshift={-1.0mm}, width={\textwidth}, height={\textwidth}, scaled x ticks={false}, xlabel={$\reduce{c}_1$}, x tick style={color={rgb,1:red,0.0;green,0.0;blue,0.0}, opacity={1.0}}, x tick label style={color={rgb,1:red,0.0;green,0.0;blue,0.0}, opacity={1.0}, rotate={0}}, xlabel style={at={(ticklabel cs:0.5)}, anchor=near ticklabel, at={{(ticklabel cs:0.5)}}, anchor={near ticklabel}, font={{\fontsize{11 pt}{14.3 pt}\selectfont}}, color={rgb,1:red,0.0;green,0.0;blue,0.0}, draw opacity={1.0}, rotate={0.0}}, xmajorgrids={true}, xmin={-0.5}, xmax={1.5}, xticklabels={{$-0.5$,$0.0$,$0.5$,$1.0$,$1.5$}}, xtick={{-0.5,0.0,0.5,1.0,1.5}}, xtick align={inside}, xticklabel style={font={{\fontsize{8 pt}{10.4 pt}\selectfont}}, color={rgb,1:red,0.0;green,0.0;blue,0.0}, draw opacity={1.0}, rotate={0.0}}, x grid style={color={rgb,1:red,0.0;green,0.0;blue,0.0}, draw opacity={0.1}, line width={0.5}, solid}, axis x line*={left}, x axis line style={color={rgb,1:red,0.0;green,0.0;blue,0.0}, draw opacity={1.0}, line width={1}, solid}, scaled y ticks={false}, ylabel={$\reduce{c}_2$}, y tick style={color={rgb,1:red,0.0;green,0.0;blue,0.0}, opacity={1.0}}, y tick label style={color={rgb,1:red,0.0;green,0.0;blue,0.0}, opacity={1.0}, rotate={0}}, ylabel style={at={(ticklabel cs:0.5)}, anchor=near ticklabel, at={{(ticklabel cs:0.5)}}, anchor={near ticklabel}, font={{\fontsize{11 pt}{14.3 pt}\selectfont}}, color={rgb,1:red,0.0;green,0.0;blue,0.0}, draw opacity={1.0}, rotate={0.0}}, ymajorgrids={true}, ymin={-0.5}, ymax={1.5}, yticklabels={{$-0.5$,$0.0$,$0.5$,$1.0$,$1.5$}}, ytick={{-0.5,0.0,0.5,1.0,1.5}}, ytick align={inside}, yticklabel style={font={{\fontsize{8 pt}{10.4 pt}\selectfont}}, color={rgb,1:red,0.0;green,0.0;blue,0.0}, draw opacity={1.0}, rotate={0.0}}, y grid style={color={rgb,1:red,0.0;green,0.0;blue,0.0}, draw opacity={0.1}, line width={0.5}, solid}, axis y line*={left}, y axis line style={color={rgb,1:red,0.0;green,0.0;blue,0.0}, draw opacity={1.0}, line width={1}, solid}, colorbar={false}, legend to name={leg:toy:init}]
    \addplot[forget plot]
        graphics[xmin={-1.0}, xmax={2.0}, ymin={-0.5}, ymax={1.5}] {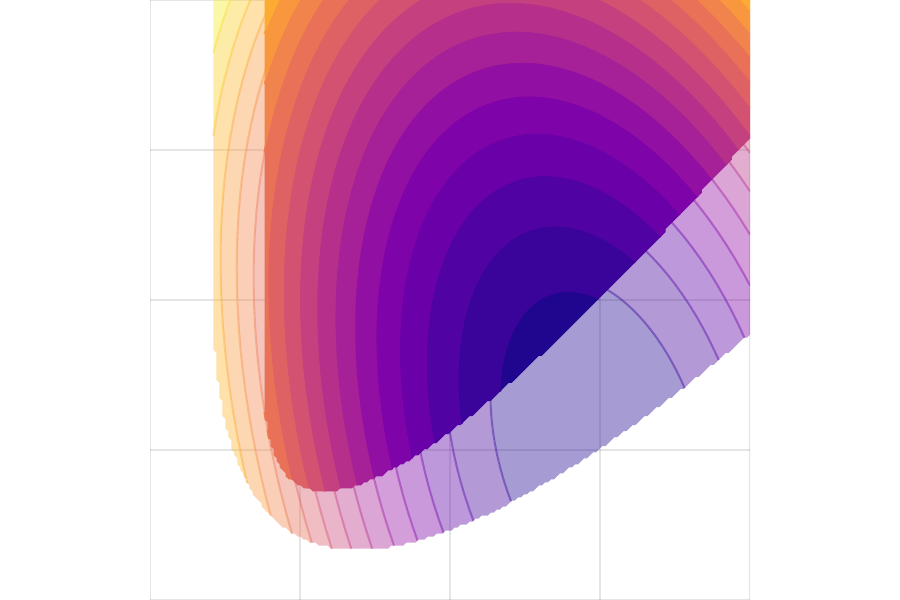}
        ;
    \addplot[color={rgb,1:red,0.6314;green,0.7882;blue,0.9569}, name path={68}, only marks, draw opacity={1.0}, line width={0}, solid, mark={*}, mark size={2.75 pt}, mark repeat={1}, mark options={color={rgb,1:red,0.0;green,0.0;blue,0.0}, draw opacity={0.0}, fill={rgb,1:red,0.6314;green,0.7882;blue,0.9569}, fill opacity={1.0}, line width={0.75}, rotate={0}, solid}]
        table[row sep={\\}]
        {
            \\
            0.7740544202513302  0.3196512449802108  \\
        }
        ;
    \addlegendentry {$L_0/\widehat{C}_0$}
    \addplot[color={rgb,1:red,1.0;green,0.7059;blue,0.5098}, name path={69}, only marks, draw opacity={1.0}, line width={0}, solid, mark={*}, mark size={2.75 pt}, mark repeat={1}, mark options={color={rgb,1:red,0.0;green,0.0;blue,0.0}, draw opacity={0.0}, fill={rgb,1:red,1.0;green,0.7059;blue,0.5098}, fill opacity={1.0}, line width={0.75}, rotate={0}, solid}]
        table[row sep={\\}]
        {
            \\
            0.8352431817125443  0.3401324147424071  \\
        }
        ;
    \addlegendentry {$L^\star/C^\star$}
    \addplot[color={rgb,1:red,0.5529;green,0.898;blue,0.6314}, name path={70}, only marks, draw opacity={1.0}, line width={0}, solid, mark={*}, mark size={2.75 pt}, mark repeat={1}, mark options={color={rgb,1:red,0.0;green,0.0;blue,0.0}, draw opacity={0.0}, fill={rgb,1:red,0.5529;green,0.898;blue,0.6314}, fill opacity={1.0}, line width={0.75}, rotate={0}, solid}]
        table[row sep={\\}]
        {
            \\
            1.0  0.0  \\
        }
        ;
    \addlegendentry {$C^{\phantom{\star}}$}
    \addplot[color={rgb,1:red,0.7255;green,0.949;blue,0.9412}, name path={71}, draw opacity={1.0}, line width={1}, solid]
        table[row sep={\\}]
        {
            \\
            0.7740544202513302  0.3196512449802108  \\
            0.8013062232726933  0.34108589030834824  \\
            0.8377998927843104  0.36832453016125233  \\
            0.8489839667476515  0.35313716446400023  \\
            0.8350995144037836  0.3400346548459766  \\
            0.8352632047410984  0.3401512005828975  \\
            0.8352431688485733  0.34013240267345185  \\
            0.8352431817135928  0.34013241474339007  \\
        }
        ;
    \addlegendentry {path}
\end{axis}
\end{tikzpicture}
		\caption{$C$-space}
		\label{fig:init:Lspace}
	\end{subfigure}
	\caption{Visualization of the $\Htwo$-error during the optimization for \Cref{ex:init}. In reduced opacity, we show the $\Htwo$-error for the perturbed system with (enlarged) $D_{\mathrm{pert}}$. The green dot represents the $C$ matrix of the original system, which provides a passive realization for the perturbed system.
	Blue dots indicate the initial guesses for the optimization, while orange dots denote the global minimizer. The path line illustrates the trajectory taken by the optimizer.}
	\label{fig:init}
\end{figure}

\subsection{The proposed passivation algorithm}
\label{sec:passivation:algorithm}
We now have all the ingredients to  present the proposed passivation algorithm~\ourmethodfull. The pseudocode for~\ourmethod is given  in~\Cref{alg:h2opt-passivation}.
\begin{algorithm}[htb]
	\caption{\ourmethodfull}
	\label{alg:h2opt-passivation}
	\KwIn{non-passive \LTI system~\eqref{eqn:LTI}, tolerance $\varepsilon>0$.
	}
	\KwOut{$\Htwo$-optimal passivating $\reduce{C}$.}
	Set $M=\big(D+D^\T\big)^{1/2}$.\\
	Perform~\Cref{alg:init} to obtain an initial guess for $L_0$.\\
	Set $L=L_0$.\\
	\Repeat{break}{
    	Solve~\eqref{sec:passivation:optproblem} for $L^{\star}$ via L-BFGS with initial guess $L$.\\
		Set $Y^{\star} = A - B\big(D+D^\T\big)^{-1}M\big(L^{\star}\big)^\T$.\\
		\eIf{$\max_{\lambda \in \sigma(Y^\star)} \Real(\lambda) > \varepsilon$}{
			Perform gradient step~\eqref{eqn:gradientDescent} to obtain new $\reduce{C}^{\star}$.\\
			\eIf{$\systemPas\big(\reduce{C}^{\star}\big)$ is passive}{
				Obtain $L$ via~\eqref{eqn:kyp:are:LM}.\\
			}
			{
				Set $L=L^{\star}$ and \emph{break}.\\
			}
		}{
			Set $L=L^{\star}$ and \emph{break}.\\
		}
	}
	\Return{$\reduce{C}= B^\T \calL^{-1}\big(-LL^\T\big) + ML^\T$.}
\end{algorithm}
\section{Numerical Experiments}
\label{sec:numerics}
In the following sections, we illustrate the effectiveness of~\Cref{alg:h2opt-passivation} on three benchmark systems:
\begin{inparaenum}[i)]
    \item a perturbed version of the ACC benchmark problem~\cite{WB92, KJ97} (small scale),
	\item the swing arm of a CD Player from~\cite{DSB92} (medium scale),
	\item and a high-speed smartphone interconnect link, which is also used in~\cite{GriS21} (medium/large scale).
\end{inparaenum}
A summary of the results is given in~\Cref{tab:runtimes}, we provide a detailed description of the results in the following sections.

Regarding the implementation, the following remarks are in order:
\begin{enumerate}
	\item All Experiments were conducted on an Apple M1 Max chip with 10 CPU cores and 32 GB of RAM.
	\item The implementation is done using the Julia programming language.
	\item The \LMI optimization problems are formulated in the JuMP modeling language~\cite{LDG+23} and solved using the Hypatia solver~\cite{CKV22} with default parameters. 
	\item For the proposed method~\ourmethod, we use the Optim.jl package~\cite{MR18} for the optimization. Specifically utilizing its L-BFGS implementation with the default convergence criterion based on the gradient norm. Moreover, we include an additional convergence criterion based on the relative change in the objective value, setting the tolerance to $\delta_{\mathcal{J}} = \num{1e-6}$ (equals \texttt{f\textunderscore tol} in the Optim.jl documentation). 
	\item For the restart strategy, we use $\alpha = \num{1e-8}$ as step size for the unconstrained gradient step.
\end{enumerate}

\begin{table}
	\caption{Comparative runtimes of the optimization process. The methods \LMI-TP~\cite{Dum02,CoePS04} and \LMI~\cite[Ch. 5.5.1]{GriS21} refer to the standard LMI method with and without trace parameterization.}
	\label{tab:runtimes}
	\begin{tabular}{llcccc}
		\toprule
		model & method & iterations & time (s) & time per iteration (s) & $\mathcal{H}_2$-error\\
		\midrule
		\multirow{3}{*}{ACC}
		& \ourmethod & 12 & \num{2.29e-04} & \num{1.91e-05} & \num{8.71e-01}\\
		& \LMI & 13 & \num{4.61e-03} & \num{3.54e-04} & \num{8.71e-01}\\
		& \LMI-TP & 11 & \num{3.59e-02} & \num{3.26e-03} & \num{8.71e-01}\\
		\midrule
		\multirow{3}{*}{CD Player}
		& \ourmethod & 30 & \num{5.44e-1} & \num{1.81e-2} & \num{1.06e+06}\\
		& \ourmethod\tablefootnote{With smaller relative convergence tolarance ($\delta_{\mathcal{J}} = \num{1e-10}$)} & 13303 & \num{1.58e+02} & \num{1.18e-02} & \num{1.00e+06}\\
		& \LMI-TP & 116 & \num{6.04e+02} & \num{5.21e+00} & \num{1.00e+06}\\
		\midrule
		Smartphone & \ourmethod & 2208 & \num{1.46e2} & \num{6.63e-2} & \num{8.32e05}\\
		\bottomrule
	\end{tabular}
\end{table}

\subsection{ACC benchmark problem}
\label{subsec:ACC}
We consider the system described by the matrices
\begin{equation*}
	A= \left[\begin{array}{rrrr}
	-\tfrac{1}{4} & 1 & 0 & 0 \\
	0 & -\tfrac{1}{4} & 1 & 0 \\
	0 & 0 & -\tfrac{1}{4} & 1 \\
	0 & 0 & -2 & -\tfrac{1}{4} 
	\end{array}\right], \quad B= \begin{bmatrix}
		0 \\
		0 \\
		0 \\
		1 
	\end{bmatrix}, \quad C= \begin{bmatrix}
			1 & 0 & 0 & 0 
	\end{bmatrix}, \quad D=0.
\end{equation*}
This system is motivated by the ACC benchmark problem~\cite{WB92}, which was  also considered in~\cite{KJ97}. 
Regardless of the choice for the initialization $L_0$, \ourmethod converges to a global minimum, with an $\Htwo$-error of $\big\|\transferFunction - \transferFunctionPas(\cdot;\reduce{C})\big\|_{\Htwo} \approx 1.03$.
However, if we add a feedthrough of $D=\tfrac{1}{8}$ we observe that for randomly initialized $L_0$, the optimization algorithm converges to a non-global local minimum in approximately 40\% of the cases.
This local minimum is characterized by a $\Htwo$-error of $\big\|\transferFunction - \transferFunctionPas(\cdot;\reduce{C})\big\|_{\Htwo} \approx 1.07$, while the global minimum is characterized by a $\Htwo$-error of $\big\|\transferFunction - \transferFunctionPas(\cdot;\reduce{C})\big\|_{\Htwo} \approx 0.87$.
In~\Cref{fig:acc} we show the optimization process for the ACC benchmark problem for $9$ random initializations and additional the \ARE initialization as proposed in~\Cref{alg:init}.  
In the case of the \ARE initialization, we add the artificial feedthrough of $\Delta_D = 1.14$ and initialize $L_0$ with the solution of the \ARE~\eqref{eqn:kyp:are} and \eqref{eqn:kyp:are:LM}. This initialization already achieves an an initial $\Htwo$-error of $\big\|\transferFunction - \transferFunctionPas(\cdot;\reduce{C})\big\|_{\Htwo} \approx 1.25$ and converges to the global minimum after 13 iterations. 
Out of the 9 random initializations, 5 converge to the global minimum and 4 converge to the local (non-global) minimum. However, when we employ our restart strategy, all initializations converge to the global minimum.
\begin{figure}[htb]
	\centering
	\input{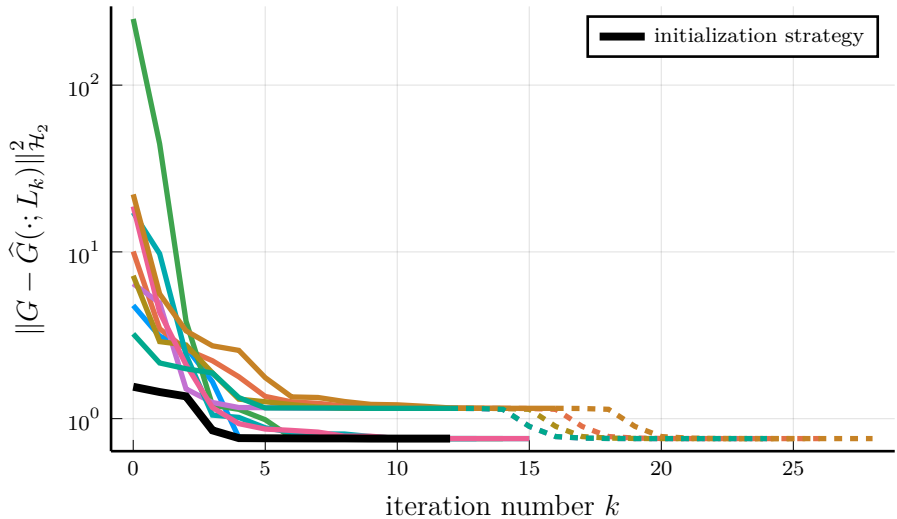}
	\caption{Squared $\Htwo$ error for each iteration during the optimization for the ACC benchmark problem from \Cref{subsec:ACC}. The black thick line corresponds to the initialization strategy using the \ARE; discussed in \Cref{sec:passivation:init}. The colored lines correspond to random initializations. If the lines become dashed, then a non-global local minimum was detected and the restart strategy was applied.} 
	\label{fig:acc}
\end{figure}

Finally, we compare runtime of the optimization process for \ourmethod (with initialization obtained by the initialization strategy), the standard \KYP lemma based \LMI and the \LMI with trace parametrization in~\Cref{tab:runtimes} (see the rows for the model ACC). We observe that, for this benchmark example, \ourmethod is faster by one order of magnitude compared to the other methods.

\subsection{CD Player}
Our second example is a model of the swing arm of a CD Player holding a lens which can be moved in the horizontal plane. The model is part of the   
SLICOT benchmark collection and was presented in~\cite{DSB92}. The system dimensions are $\stateDim = 120$ and $\inpVarDim = 2$ and the system has no feedthrough, i.e., $D=0$. Although the system is of medium size, the model is a challenging benchmark model, since the distance to passivity is large. This can be seen from sampling the modified Popov function  where we observe passivity violation of magnitude $\num{1e6}$.
In order to apply our initialization strategy, we need to perturb the feedthrough of the system by a perturbation with an order of magnitude of $\num{1e6}$. \ourmethod converges after 30 iterations within approximately half a second. The results are summarized in \Cref{tab:runtimes}.
In this example the standard \LMI approach converges, but the resulting model is numerically not passive and thus not shown in~\Cref{tab:runtimes}. If we use the trace parametrized version, the \LMI is solved after 116 iterations in approximately 600 seconds.
We notice, that the \LMI-TP approach achieves a smaller $\Htwo$-error than \ourmethod. This is due to our convergence criterion. If we set the relative tolerance of the objective to $\delta_{\mathcal{J}} = \num{1e-10}$, we achieve a similar $\Htwo$-error after 13303 iterations. This is still significantly faster than the \LMI-TP approach.

\subsection{High-speed smartphone interconnect link}
\label{sec:numerics:smartphone}
Our final example is a high-speed smartphone interconnect link presented in~\cite{GriS21}. The system dimensions are $\stateDim = 800$ and $\inpVarDim = 4$, the state matrix $A$ is dense, and the system was identified from frequency response  data via vector fitting. For more details we refer to~\cite{GriS21}. We will compare our passivated model obtained by \ourmethod, with the passivated model from~\cite{GriS21} obtained by perturbation of Hamiltonian eigenvalue problems~\cite{Gri04}. We refer to the latter as the reference model. 
Before we apply our optimization, we transform the system to diagonal form, i.e., a similarity transformation with the eigenvectors of $A$.
This significantly reduces the computational effort for the optimization as presented in~\Cref{sec:accelerations}. To give an intuition, solving the Lyapunov equation including the Schur decomposition for a random right-hand side takes around 550 \unit{\milli\second} before the transformation, with the Schur decomposition alone accounting for about 260 \unit{\milli\second}. After the transformation, however, the total time drops to under 4 \unit{\milli\second}.
After applying our optimization, which takes roughly 150 \unit{\second}, we achieve a passivated model that improves the $\Htwo$-error by approximately $31\%$ relative to the reference model in~\cite{GriS21}.
In~\Cref{fig:smartphone} we show the largest and smallest singular values of the transfer function of the original model, our passivated model and the reference model from~\cite{GriS21}. We observe in~\Cref{fig:smartphone:sigma}, that our passivated system captures the dynamics at least as well as the reference model. Indeed, in \Cref{fig:smartphone:sigmaerr} we  see the error of the singular values, and for most frequencies our model achieves a lower error than the reference model.

\begin{figure}[htb]
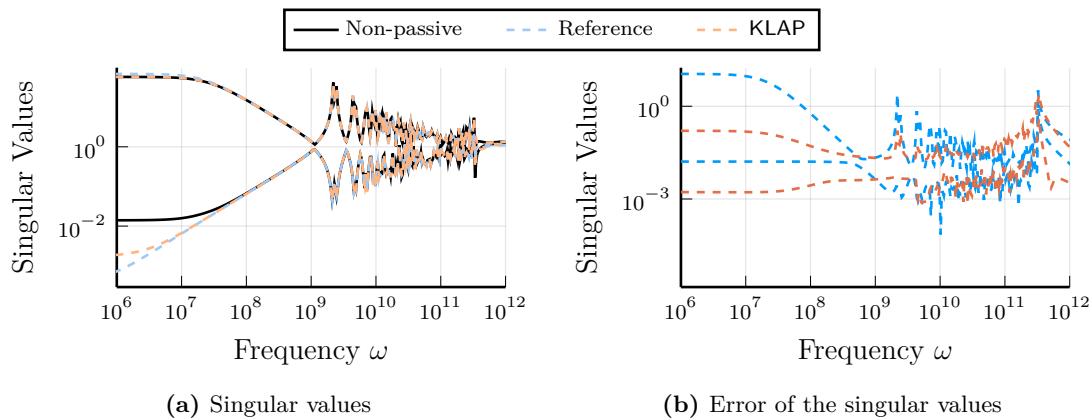

	\centering
	\ref{leg:smartphone}\\
	\begin{subfigure}{.5\linewidth}
		\centering
		\input{plots/smartphone_sigma.tikz}
		\caption{Singular values}
		\label{fig:smartphone:sigma}
	\end{subfigure}\hfill
	\begin{subfigure}{.5\linewidth}
		\centering
		\input{plots/smartphone_sigmaerr.tikz}
		\caption{Error of the singular values}
		\label{fig:smartphone:sigmaerr}
	\end{subfigure}
	\caption{Largest and smallest singular values of the transfer function over a frequency range for the high-speed smartphone interconnect link discussed in \Cref{sec:numerics:smartphone}.}
	\label{fig:smartphone}
\end{figure}

\section{Conclusions}
\label{sec:conclusions}
We presented a novel approach, called~\ourmethod, to address the $\mathcal{H}_2$-optimal passivation problem for linear dynamical systems. The core of \ourmethod combines the \KYP or positive real lemma with the existence of rank-minimizing solutions. This combination allows us to explicitly parametrize the output matrix, ensuring that the system is passive by design. Using this parametrization, we formulated the optimization problem and examined its well-posedness. Additionally, we proposed an initialization strategy based on the \ARE related to the \KYP inequality. Since our parametrization may lead to the presence of non-global local minima, we established a criterion for identifying the global minima and introduced a restart strategy in case the algorithm is potentially stuck in a non-global local minimum.
We validated the effectiveness of our approach on three benchmark examples, demonstrating significant speedups over traditional methods based on the positive real lemma while achieving comparable accuracy in $\mathcal{H}_2$-error. Although not covered in this work, we expect that the same approach can be readily applied to the bounded real lemma to find the nearest contractive system.  Furthermore, we foresee the method being extendable to parametric systems and differential-algebraic equations. Also, the application of \ourmethod to the $\mathcal{H}_\infty$-optimal passivation problem is a promising direction for future research.

\subsection*{Code and data availability} 
\phantom{x}\\
\vspace{0.5cm}
\noindent\fbox{%
    \parbox{0.98\textwidth}{%
        The code and data used to generate the numerical results are accessible via
		\begin{center}
			\href{https://doi.org/10.5281/zenodo.14617036}{doi.org/10.5281/zenodo.14617036}
		\end{center}
		under MIT Common License.
    }%
}
\vspace{0.2cm}
\subsection*{Acknowledgments}
The idea for this research project started during SG's Argyris visiting professorship in 2022 at the University of Stuttgart and major parts of this manuscript were written while BU was with the University of Stuttgart. JN and BU acknowledge funding from the DFG under Germany's Excellence Strategy -- EXC 2075 -- 390740016. JN, BU, and SG are thankful for support by the Stuttgart Center for Simulation Science (SimTech). SG's work was supported in part by the US National Science Foundation grant AMPS-2318880. BU is partly funded by the Deutsche Forschungsgemeinschaft (DFG, German Research Foundation) -- Project-ID 258734477 -- SFB 1173.
Moreover, the authors want to thank Prof.~Stefano Grivet-Talocia for providing the data for the high-speed smartphone interconnect link example.

\subsection*{CRediT author statement}
\textbf{Jonas Nicodemus:} Conceptualization, Methodology, Software, Investigation, Data Curation, Writing - Original Draft, Writing - Review \& Editing; \textbf{Matthias Voigt:} Writing - Original Draft, Writing - Review \& Editing ; \textbf{Serkan Gugercin:} Writing - Original Draft, Writing - Review \& Editing; \textbf{Benjamin Unger:} Conceptualization, Methodology, Investigation, Writing - Original Draft, 
Writing - Review \& Editing, Supervision, Funding acquisition.

\bibliographystyle{plain-doi}
\bibliography{journalabbr, literature, NGUV24}

\begin{thebibliography}{10}

\bibitem{AMU21}
\textsc{R.~Altmann, V.~Mehrmann, and B.~Unger}.
\newblock \href{https://doi.org/10.1080/13873954.2021.1975137}{Port-{{Hamiltonian}} formulations of poroelastic network models}.
\newblock {\em Math. Comput. Model. Dyn. Sys.}, 27(1):429--452, 2021.

\bibitem{AndV73}
\textsc{B.~D.~O. Anderson and S.~Vongpanitlerd}.
\newblock {\em Network Analysis and Synthesis -- A Modern Systems Theory Approach}.
\newblock Prentice-Hall, Englewood Cliffs, NJ, 1973.

\bibitem{Ant05a}
\textsc{A.~C. Antoulas}.
\newblock \href{https://doi.org/10.1137/1.9780898718713}{{\em Approximation of Large-Scale Dynamical Systems}}.
\newblock Adv. Des. Control. Society for Industrial and Applied Mathematics, Philadelphia, PA, USA, 2005.

\bibitem{Ant05}
\textsc{A.~C. Antoulas}.
\newblock \href{https://doi.org/10.1016/j.sysconle.2004.07.007}{A new result on passivity preserving model reduction}.
\newblock {\em Systems Control Lett.}, 54(4):361--374, 2005.

\bibitem{BGV20}
\textsc{P.~Benner, P.~Goyal, and P.~Van~Dooren}.
\newblock \href{https://doi.org/10.1016/j.sysconle.2020.104741}{Identification of port-{{Hamiltonian}} systems from frequency response data}.
\newblock {\em Systems Control Lett.}, 143:104741, 2020.

\bibitem{BGV24}
\textsc{P.~Benner, P.~Goyal, and P.~Van~Dooren}.
\newblock \href{https://doi.org/10.1137/23M1580528}{Parameterized {{Interpolation}} of {{Passive Systems}}}.
\newblock {\em SIAM J. Matrix Anal. Appl.}, 45(2):1035--1053, 2024.

\bibitem{BKS16}
\textsc{P.~Benner, P.~K{\"u}rschner, and J.~Saak}.
\newblock \href{https://doi.org/10.1137/15M1030911}{Frequency-limited balanced truncation with low-rank approximations}.
\newblock {\em SIAM J. Sci. Comput.}, 38(1):A471--A499, 2016.

\bibitem{BS13a}
\textsc{P.~Benner and J.~Saak}.
\newblock \href{https://doi.org/10.1002/gamm.201310003}{Numerical solution of large and sparse continuous time algebraic matrix {{Riccati}} and {{Lyapunov}} equations: A state of the art survey}.
\newblock {\em GAMM-Mitt.}, 36(1):32--52, 2013.

\bibitem{BoyBK89}
\textsc{S.~Boyd, V.~Balakrishnan, and P.~Kabamba}.
\newblock \href{https://doi.org/10.1007/BF02551385}{A bisection method for computing the ${H}_{\infty}$ norm of a transfer matrix and related problems}.
\newblock {\em Math. Control Signals Systems}, 2(3):207--219, 1989.

\bibitem{BU22}
\textsc{T.~Breiten and B.~Unger}.
\newblock \href{https://doi.org/10.1016/j.automatica.2022.110368}{Passivity preserving model reduction via spectral factorization}.
\newblock {\em Automatica J. IFAC}, 142:110368, 2022.

\bibitem{BLME20}
\textsc{B.~Brogliato, R.~Lozano, B.~Maschke, and O.~Egeland}.
\newblock \href{https://doi.org/10.1007/978-3-030-19420-8}{{\em Dissipative {{Systems Analysis}} and {{Control}}: {{Theory}} and {{Applications}}}}.
\newblock Comm. {{Control Engrg}}. {{Ser}}. Springer International Publishing, Cham, 2020.

\bibitem{BruS90}
\textsc{N.~A. Bruinsma and M.~Steinbuch}.
\newblock \href{https://doi.org/10.1016/0167-6911(90)90049-Z}{A fast algorithm to compute the ${H}_{\infty}$-norm of a transfer function matrix}.
\newblock {\em Systems Control Lett.}, 14(4):287--293, 1990.

\bibitem{BruS13}
\textsc{T.~Br\"ull and C.~Schr\"oder}.
\newblock \href{https://doi.org/10.1109/TCSI.2012.2215731}{Dissipativity enforcement via perturbation of para-{H}ermitian pencils}.
\newblock {\em IEEE Trans. Circuits Syst. I. Regul. Pap.}, 60(1):164--177, 2013.

\bibitem{CamIV14}
\textsc{M.~K. Camlibel, L.~Iannelli, and F.~Vasca}.
\newblock \href{https://doi.org/10.1007/s10107-013-0678-4}{Passivity and complementarity}.
\newblock {\em Math. Program.}, 145:531--563, 2014.

\bibitem{CGH24}
\textsc{K.~Cherifi, H.~Gernandt, and D.~Hinsen}.
\newblock \href{https://doi.org/10.1007/s00498-023-00373-2}{The difference between port-{{Hamiltonian}}, passive and positive real descriptor systems}.
\newblock {\em Math. Control Signals Syst.}, 36(2):451--482, 2024.

\bibitem{CleALM97}
\textsc{D.~J. Clements, B.~D.~O. Anderson, A.~J. Laub, and J.~B. Matson}.
\newblock \href{https://doi.org/10.1016/0024-3795(95)00525-0}{Spectral factorization with imaginary-axis zeros}.
\newblock {\em Linear Algebra Appl.}, 250:225--252, 1997.

\bibitem{CoePS04}
\textsc{C.~P. Coelho, J.~Phillips, and L.~M. Silveira}.
\newblock \href{https://doi.org/10.1109/TCAD.2003.822107}{A convex programming approach for generating guaranteed passive approximations to tabulated frequency-data}.
\newblock {\em {IEEE} Trans. Comput.-Aided Des. Integ. Circuits Syst.}, 23(2):293--301, 2004.

\bibitem{CKV22}
\textsc{C.~Coey, L.~Kapelevich, and J.~P. Vielma}.
\newblock \href{https://doi.org/10.1287/ijoc.2022.1202}{Solving natural conic formulations with {{Hypatia}}.jl}.
\newblock {\em INFORMS J. Comput.}, 34(5):2686--2699, 2022.

\bibitem{DP84}
\textsc{U.~Desai and D.~Pal}.
\newblock \href{https://doi.org/10.1109/TAC.1984.1103438}{A transformation approach to stochastic model reduction}.
\newblock {\em IEEE Trans. Automat. Control}, 29(12):1097--1100, 1984.

\bibitem{DSB92}
\textsc{W.~Draijer, M.~Steinbuch, and O.~Bosgra}.
\newblock \href{https://doi.org/10.1016/0005-1098(92)90171-B}{Adaptive control of the radial servo system of a compact disc player}.
\newblock {\em Automatica J. IFAC}, 28(3):455--462, 1992.

\bibitem{Dum02}
\textsc{B.~Dumitrescu}.
\newblock \href{https://doi.org/10.1109/81.995670}{Parameterization of positive-real transfer functions with fixed poles}.
\newblock {\em IEEE Trans. Circuits Syst. I. Regul. Pap.}, 49(4):523--526, 2002.

\bibitem{FaePCMR21}
\textsc{N.~Faedo, Y.~Peña-Sanchez, F.~Carapellese, G.~Mattiazzo, and J.~V. Ringwood}.
\newblock \href{https://doi.org/https://doi.org/10.1049/rpg2.12286}{{LMI}-based passivation of {LTI} systems with application to marine structures}.
\newblock {\em IET Renew. Power Gen.}, 15(14):3424--3433, 2021.

\bibitem{FazGL21}
\textsc{A.~Fazzi, N.~Guglielmi, and C.~Lubich}.
\newblock \href{https://doi.org/10.1137/20M1376972}{Finding the nearest passive or nonpassive system via {Hamiltonian} eigenvalue optimization}.
\newblock {\em {SIAM} J. Matrix Anal. Appl.}, 42(4):1553--1580, 2021.

\bibitem{Fre00}
\textsc{R.~W. Freund}.
\newblock \href{https://doi.org/10.1016/S0377-0427(00)00396-4}{Krylov-subspace methods for reduced-order modeling in circuit simulation}.
\newblock {\em J. Comput. Appl. Math.}, 123(1-2):395--421, 2000.

\bibitem{Fre11}
\textsc{R.~W. Freund}.
\newblock \href{https://doi.org/10.1007/978-94-007-0089-5_2}{The {{SPRIM}} algorithm for structure-preserving order reduction of general {{RCL}} circuits}.
\newblock In \textsc{P.~Benner, M.~Hinze, and E.~J.~W. {ter Maten}}, editors, {\em Model {{Reduction}} for {{Circuit Simulation}}}, pages 25--52. Springer Netherlands, Dordrecht, 2011.

\bibitem{GawJ90}
\textsc{W.~Gawronski and J.-N. Juang}.
\newblock \href{https://doi.org/10.1080/00207729008910366}{Model reduction in limited time and frequency intervals}.
\newblock {\em Internat. J. Systems Sci.}, 21:349--376, 1990.

\bibitem{GS18}
\textsc{N.~Gillis and P.~Sharma}.
\newblock \href{https://doi.org/10.1137/17M1137176}{Finding the nearest positive-real system}.
\newblock {\em SIAM J. Numer. Anal.}, 56(2):1022--1047, 2018.

\bibitem{GriU06}
\textsc{S.~Grivet-Talocia and A.~Ubolli}.
\newblock \href{https://doi.org/10.1109/TADVP.2005.862659}{On the generation of large passive macromodels for complex interconnect structures}.
\newblock {\em IEEE Trans. Adv. Packaging}, 29(1):39--54, 2006.

\bibitem{Gri04}
\textsc{S.~{Grivet-Talocia}}.
\newblock \href{https://doi.org/10.1109/TCSI.2004.834527}{Passivity enforcement via perturbation of {{Hamiltonian}} matrices}.
\newblock {\em IEEE Trans. Circuits Syst. I. Regul. Pap.}, 51(9):1755--1769, 2004.

\bibitem{GriG16}
\textsc{S.~Grivet-Talocia and B.~Gustavsen}.
\newblock \href{https://doi.org/10.1002/9781119140931}{{\em Passive Macromodeling: Theory and Applications}}.
\newblock John Wiley and Sons, New York, 2016.

\bibitem{GriS21}
\textsc{S.~Grivet-Talocia and L.~Silveira}.
\newblock \href{https://doi.org/10.1515/9783110498967-005}{Post-processing methods for passivity enforcement}.
\newblock In \textsc{P.~Benner, S.~Grivet-Talocia, A.~Quarteroni, G.~Rozza, W.~Schilders, and L.~M. Silveira}, editors, {\em Model Order Reduction: Volume 1 -- System- and Data-Driven Methods and Algorithms}, pages 139--180. De Gruyter, Berlin, Boston, 2021.

\bibitem{GU08}
\textsc{S.~{Grivet-Talocia} and A.~Ubolli}.
\newblock \href{https://doi.org/10.1109/TADVP.2008.926004}{A comparative study of passivity enforcement schemes for linear lumped macromodels}.
\newblock {\em IEEE Trans. Adv. Packaging}, 31(4):673--683, 2008.

\bibitem{GPBS12}
\textsc{S.~Gugercin, R.~V. Polyuga, C.~Beattie, and A.~van~der Schaft}.
\newblock \href{https://doi.org/10.1016/j.automatica.2012.05.052}{Structure-preserving tangential interpolation for model reduction of port-{{Hamiltonian}} systems}.
\newblock {\em Automatica J. IFAC}, 48(9):1963--1974, 2012.

\bibitem{GJT24}
\textsc{M.~G{\"u}nther, B.~Jacob, and C.~Totzeck}.
\newblock \href{https://doi.org/10.1007/s00498-024-00389-2}{Data-driven adjoint-based calibration of port-{{Hamiltonian}} systems in time domain}.
\newblock {\em Math. Control Signals Systems}, 36(4):957--977, 2024.

\bibitem{GS01}
\textsc{B.~Gustavsen and A.~Semlyen}.
\newblock \href{https://doi.org/10.1109/59.910786}{Enforcing passivity for admittance matrices approximated by rational functions}.
\newblock {\em IEEE Trans. Power Syst.}, 16(1):97--104, 2001.

\bibitem{KJ97}
\textsc{A.~Kelkar and S.~Joshi}.
\newblock \href{https://doi.org/10.1109/ACC.1997.611938}{Robust control of non-passive systems via passification [for passification read passivation]}.
\newblock In {\em Proceedings of the 1997 {{American Control Conference}}}, volume~5, pages 2657--2661, 1997.

\bibitem{LanT85}
\textsc{P.~Lancaster and M.~Tismenetsky}.
\newblock \href{https://www.elsevier.com/books/the-theory-of-matrices/lancaster/978-0-08-051908-1}{{\em The Theory of Matrices: With Applications}}.
\newblock Comput. Sci. Sci. Comput. Ser. Elsevier Science, 2nd edition, 1985.

\bibitem{LW02}
\textsc{J.-R. Li and J.~White}.
\newblock \href{https://doi.org/10.1137/S0895479801384937}{Low rank solution of {{Lyapunov}} equations}.
\newblock {\em SIAM J. Matrix Anal. Appl.}, 24(1):260--280, 2002.

\bibitem{LDG+23}
\textsc{M.~Lubin, O.~Dowson, J.~D. Garcia, J.~Huchette, B.~Legat, and J.~P. Vielma}.
\newblock \href{https://doi.org/10.1007/s12532-023-00239-3}{{{JuMP}} 1.0: Recent improvements to a modeling language for mathematical optimization}.
\newblock {\em Math. Program. Comput.}, 15(3):581--589, 2023.

\bibitem{MU23}
\textsc{V.~Mehrmann and B.~Unger}.
\newblock \href{https://doi.org/10.1017/S0962492922000083}{Control of port-{{Hamiltonian}} differential-algebraic systems and applications}.
\newblock {\em Acta Numer.}, 32:395--515, 2023.

\bibitem{MR18}
\textsc{P.~K. Mogensen and A.~N. Riseth}.
\newblock \href{https://doi.org/10.21105/joss.00615}{Optim: {{A}} mathematical optimization package for {{Julia}}}.
\newblock {\em J. Open Source Softw.}, 3(24):615, 2018.

\bibitem{MNU23}
\textsc{R.~Morandin, J.~Nicodemus, and B.~Unger}.
\newblock \href{https://doi.org/10.1137/22M149329X}{Port-{{Hamiltonian Dynamic Mode Decomposition}}}.
\newblock {\em SIAM J. Sci. Comput.}, 45(4):A1690--A1710, 2023.

\bibitem{Rei11}
\textsc{T.~Reis}.
\newblock Lur'e equations and even matrix pencils.
\newblock {\em Linear Algebra Appl.}, 434:152--173, 2011.

\bibitem{ReiGG23}
\textsc{S.~Reiter, I.~V. Gosea, and S.~Gugercin}.
\newblock \href{https://doi.org/10.48550/arXiv.2312.12561}{Generalizations of data-driven balancing: What to sample for different balancing-based reduced models}.
\newblock arXiv e-print 2312.12561, 2023.

\bibitem{Rud87}
\textsc{W.~Rudin}.
\newblock \href{https://59clc.wordpress.com/wp-content/uploads/2011/01/real-and-complex-analysis.pdf}{{\em Real and Complex Analysis}}.
\newblock McGraw-Hill, New York, 3rd ed edition, 1987.

\bibitem{SAN05}
\textsc{D.~Saraswat, R.~Achar, and M.~Nakhla}.
\newblock \href{https://doi.org/10.1109/TVLSI.2005.850098}{Global passivity enforcement algorithm for macromodels of interconnect subnetworks characterized by tabulated data}.
\newblock {\em IEEE Trans. Very Large Scale Integr. VLSI Syst.}, 13(7):819--832, 2005.

\bibitem{SAN07}
\textsc{D.~Saraswat, R.~Achar, and M.~S. Nakhla}.
\newblock \href{https://doi.org/10.1109/TVLSI.2007.891085}{Fast passivity verification and enforcement via reciprocal systems for interconnects with large order macromodels}.
\newblock {\em IEEE Trans. Very Large Scale Integr. VLSI Syst.}, 15(1):48--59, 2007.

\bibitem{SchS07}
\textsc{C.~Schr\"oder and T.~Stykel}.
\newblock \href{https://opus4.kobv.de/opus4-matheon/frontdoor/index/index/docId/368}{Passivation of {LTI} systems}.
\newblock MATHEON preprint 368, DFG-Forschungszentrum MATHEON, 2007.

\bibitem{SV23}
\textsc{P.~Schwerdtner and M.~Voigt}.
\newblock \href{https://doi.org/10.1137/20M1380235}{{{SOBMOR}}: {{Structured}} optimization-based model order reduction}.
\newblock {\em SIAM J. Sci. Comput.}, 45(2):A502--A529, 2023.

\bibitem{Sim16}
\textsc{V.~Simoncini}.
\newblock \href{https://doi.org/10.1137/130912839}{Computational {{Methods}} for {{Linear Matrix Equations}}}.
\newblock {\em SIAM Rev.}, 58(3):377--441, 2016.

\bibitem{Sch07}
\textsc{A.~van~der Schaft}.
\newblock \href{https://doi.org/10.4171/022-3/65}{Port-{{Hamiltonian}} systems: An introductory survey}.
\newblock In {\em Proceedings of the {{International Congress}} of {{Mathematicians}}}, volume~3, pages 1339--1365. European Mathematical Society Publishing House, 2007.

\bibitem{Sch17}
\textsc{A.~van~der Schaft}.
\newblock \href{https://doi.org/10.1007/978-3-319-49992-5}{{\em $L_2$-{{Gain}} and {{Passivity Techniques}} in {{Nonlinear Control}}}}.
\newblock Comm. Control Engrg. Ser. Springer International Publishing, Cham, 2017.

\bibitem{VanGA08}
\textsc{P.~{Van Dooren}, K.~A. Gallivan, and P.-A. Absil}.
\newblock \href{https://doi.org/10.1016/j.aml.2007.09.015}{$\mathcal{H}_2$-optimal model reduction of {MIMO} systems}.
\newblock {\em Appl. Math. Lett.}, 21(12):1267--1273, 2008.

\bibitem{WB92}
\textsc{B.~Wie and D.~S. Bernstein}.
\newblock \href{https://doi.org/10.2514/3.20949}{Benchmark problems for robust control design}.
\newblock {\em J. Guid. Control Dyn.}, 15(5):1057--1059, 1992.

\bibitem{Wil71}
\textsc{J.~C. Willems}.
\newblock \href{https://doi.org/10.1109/TAC.1971.1099831}{Least squares stationary optimal control and the algebraic {{Riccati}} equation}.
\newblock {\em IEEE Trans. Automat. Control}, 16(6):621--634, 1971.

\bibitem{Wil72}
\textsc{J.~C. Willems}.
\newblock \href{https://doi.org/10.1007/BF00276493}{Dissipative dynamical systems {{Part I}}: {{General}} theory}.
\newblock {\em Arch. Rational Mech. Anal.}, 45(5):321--351, 1972.

\bibitem{Wil72a}
\textsc{J.~C. Willems}.
\newblock \href{https://doi.org/10.1007/BF00276494}{Dissipative dynamical systems {{Part II}}: {{Linear}} systems with quadratic supply rates}.
\newblock {\em Arch. Rational Mech. Anal.}, 45(5):352--393, 1972.

\bibitem{WL99}
\textsc{W.-Y. Yan and J.~Lam}.
\newblock \href{https://doi.org/10.1109/9.774107}{An approximate approach to ${H}^2$ optimal model reduction}.
\newblock {\em IEEE Trans. Automat. Control}, 44(7):1341--1358, July 1999.

\end{thebibliography}

\end{document}